\numberwithin{equation}{section}
\def\endproof{\hfill$\square$\medskip}
\newtheorem{theorem}{Theorem}[section]
\newtheorem{proposition}[theorem]{Proposition}
\newtheorem{corollary}[theorem]{Corollary}
\newtheorem{lemma}[theorem]{Lemma}
\theoremstyle{definition}
\newtheorem{definition}[theorem]{Definition}
\newtheorem{remark}[theorem]{Remark}
\newtheorem{notation}[theorem]{Notation}
\DeclareMathOperator\lk{\mathrm{lk}}
\DeclareMathOperator\st{\mathrm{st}}
\newcommand{\field}{{\mathbb F}}
\newcommand{\Q}{{\mathbb Q}}
\newcommand{\Z}{{\mathbb Z}}
\newcommand{\ZZ}{{\mathbb Z}}
\newcommand{\FF}{{\mathbb F}}
\newcommand{\Sm}{{\mathcal S}}
\newcommand{\mideal}{\ensuremath{\mathfrak{m}}}
\newcommand{\Tor}{\ensuremath{\mathrm{Tor}}\hspace{1pt}}
\newcommand{\Ker}{\ensuremath{\mathrm{Ker}}\hspace{1pt}}
\newcommand{\Image}{\ensuremath{\mathrm{Im}}\hspace{1pt}}
\title{Face numbers of manifolds with boundary}
\author{Satoshi Murai\thanks{Research is partially
supported by JSPS KAKENHI 25400043.}\\
\small Department of Pure and Applied Mathematics\\[-0.8ex]
\small Graduate School of Information Science and Technology\\[-0.8ex]
\small Osaka University, Suita, Osaka 565-0871, Japan\\[-0.8ex]
\small \texttt{s-murai@ist.osaka-u.ac.jp}
\and Isabella Novik\thanks{Research is partially
supported by NSF grant DMS-1361423}\\
\small Department of Mathematics\\[-0.8ex]
\small University of Washington\\[-0.8ex]
\small Seattle, WA 98195-4350, USA\\[-0.8ex]
\small \texttt{novik@math.washington.edu}
}
\begin{document}
\maketitle

\begin{abstract}
We study face numbers of simplicial complexes that triangulate manifolds (or even normal pseudomanifolds) with boundary. Specifically, we establish a sharp lower bound on the number of interior edges of a simplicial normal pseudomanifold with boundary in terms of the number of interior vertices and relative Betti numbers. Moreover, for triangulations of manifolds with boundary all of whose vertex links have the weak Lefschetz property, we extend this result to sharp lower bounds on the number of higher-dimensional interior faces. Along the way we develop a version of Bagchi and Datta's $\sigma$- and $\mu$-numbers for the case of relative simplicial complexes and prove stronger versions of the above statements with the Betti numbers replaced by the $\mu$-numbers. Our results provide natural generalizations of known theorems and conjectures for closed manifolds and appear to be new even for the case of a ball. 
\end{abstract}

\noindent{{\large{\bf Keywords:}} face numbers, the lower bound theorem,
triangulations of manifolds, relative
 simplicial complexes, Stanley-Reisner modules,
graded Betti numbers, the weak
Lefschetz property, Morse inequalities.

\section{Introduction}
Given a simplicial complex $\Delta$, one can count the number of faces of $\Delta$ of each dimension. These numbers are called the face numbers or the $f$-numbers of $\Delta$. When $\Delta$ triangulates a manifold or a normal pseudomanifold $M$, it is natural to ask what restrictions does the topology of $M$ place on the possible face numbers of $\Delta$. For the case of closed manifolds, the last decade of research led to tremendous progress on this question, see, for instance, \cite{Bagchi:mu-vector, Bagchi-Datta-14, Lutz-Sulanke-Swartz-09, Murai-10:Barycentric, Murai-15, Murai-Nevo-14, Novik-05, Novik-Swartz-09:DS, Novik-Swartz-09:Gorenstein, Novik-Swartz-09:Socles, Swartz-09, Swartz-14}. On the other hand, face numbers of manifolds with boundary remained a big mystery, and very few papers even touched on this subject, see \cite{Grabe-87, Kolins-11, Murai-Nevo-14, Novik-Swartz-09:DS}. At present there is not even a conjecture for characterizing the set of $f$-vectors of balls of dimension six and above, see \cite{Kolins-11}.

The goal of this paper is to at least partially remedy this situation. The main simple but surprisingly novel idea the paper is built on is that to study the face numbers of a manifold with boundary $\Delta$, the right object to analyze is the \emph{relative simplicial complex} $(\Delta, \partial\Delta)$ rather than the complex $\Delta$ itself. (We must mention that earlier this year an idea of using relative simplicial complexes was applied by Adiprasito and Sanyal in their breakthrough solution of long-standing questions regarding the combinatorial complexity of Minkowski sums of polytopes, see \cite{Adiprasito-Sanyal}. Our paper is undoubtedly influenced by their results.)

Once this simple realization is made, the rest of the pieces fall, with some work, into place. For instance, the Dehn-Sommerville relations from \cite{Grabe-87} take on the following elegant form. (We defer all the definitions until the next section, and for now merely note that the $h''$-numbers are linear combinations of the $f$-numbers and the Betti numbers.)
\begin{proposition} \label{Dehn-Somm}
Let $\Delta$ be a (not necessarily connected) $(d-1)$-dimensional orientable homology manifold with boundary. Then 
\[
h''_i(\Delta,\partial\Delta)=h''_{d-i}(\Delta) \quad \mbox{for all } 0<i < d.
\]
If $\Delta$ is connected, then this equality also holds for $i=0$ and $i=d$.
\end{proposition}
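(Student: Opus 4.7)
My plan is to derive the symmetry $h''_i(\Delta, \partial\Delta) = h''_{d-i}(\Delta)$ as a consequence of Poincaré--Lefschetz duality for the pair $(\Delta, \partial\Delta)$, combined with the purely combinatorial Dehn--Sommerville identities relating the plain $h$-numbers of $\Delta$, $\partial\Delta$, and the interior face vector $f_i(\Delta) - f_i(\partial\Delta)$. The philosophy is that the $h''$-correction is precisely engineered to absorb the Euler-characteristic discrepancies that obstruct a clean symmetry at the level of the plain $h$-numbers.

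The steps are as follows. First, I would unwind the $h''$-definitions from Section~2: in both the absolute and relative settings, $h''_i$ is $h'_i$ corrected by a multiple of $\tilde\beta_{i-1}$, and $h'_i$ is itself the plain $h_i$ corrected by a Schenzel-style alternating sum of lower Betti numbers weighted by $\binom{d}{i}$. Second, I would establish a Gräbe-style identity expressing $h_{d-i}(\Delta) - h_i(\Delta, \partial\Delta)$ as an explicit alternating sum involving $\tilde\chi(\Delta)$, $\tilde\chi(\partial\Delta)$ (and hence relative Betti numbers via the long exact sequence); this comes from plugging $f_j(\Delta) = f_j(\Delta, \partial\Delta) + f_j(\partial\Delta)$ into the standard $f$-to-$h$ transform and using the fact that $\partial\Delta$ is itself a closed orientable homology $(d-2)$-manifold (so the symmetry $h_i(\partial\Delta) = h_{d-1-i}(\partial\Delta)$ already holds up to a Betti correction by the closed Dehn--Sommerville relations). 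Third, I would apply Poincaré--Lefschetz duality, which for an orientable $(d-1)$-homology manifold with boundary gives
\[
\tilde\beta_i(\Delta, \partial\Delta) = \tilde\beta_{d-1-i}(\Delta) \quad \text{for all } i.
\]
Finally, substituting this duality into the two Schenzel formulas from the first step, one checks that the Euler-characteristic discrepancy from the second step is cancelled exactly by the transported Betti corrections, yielding the claimed identity.

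The main obstacle I anticipate is bookkeeping: the combined formulas are nested alternating sums of Betti numbers weighted by binomial coefficients, and verifying that the substitution $\tilde\beta_i(\Delta, \partial\Delta) \leftrightarrow \tilde\beta_{d-1-i}(\Delta)$ produces exactly the required cancellations will rely on a binomial identity of Vandermonde type. The extremal cases $i = 0$ and $i = d$ are also subtle, because they involve $\tilde\beta_{-1}$ and $\tilde\beta_{d-1}$, which sit outside the range of the Schenzel correction in the interior cases. The connectedness hypothesis enters here precisely to match the fundamental class: for a connected orientable manifold with boundary, $H_{d-1}(\Delta, \partial\Delta) \cong \mathbb{Z}$, so that $\tilde\beta_{d-1}(\Delta, \partial\Delta) = 1 = \tilde\beta_{-1}(\emptyset)$ (interpreted appropriately), and this is what lets the boundary cases of the symmetry go through.
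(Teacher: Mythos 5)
Your overall architecture (split $f_j(\Delta)=f_j(\Delta,\partial\Delta)+f_j(\partial\Delta)$, feed in a Gr\"abe-type relation, then use Poincar\'e--Lefschetz duality $\tilde{b}_k(\Delta,\partial\Delta)=\tilde{b}_{d-1-k}(\Delta)$ to convert the Euler-characteristic term into exactly the $h''$-corrections, with connectedness entering only at $i=0,d$ via the fundamental class) is the same as the paper's. But your second step has a genuine gap: the identity you need there, namely $h_i(\Delta,\partial\Delta)=h_{d-i}(\Delta)+\binom{d}{i}(-1)^{d-i}\tilde{\chi}(\Delta)$, is precisely Gr\"abe's Dehn--Sommerville theorem for homology manifolds \emph{with boundary}, and it does not follow from the ingredients you list. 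Plugging the $f$-number splitting into the $f$-to-$h$ transform only gives $h_i(\Delta)=h_i(\Delta,\partial\Delta)+g_i(\partial\Delta)$ (the paper's Lemma 3.1), and the closed Dehn--Sommerville relations for $\partial\Delta$ only relate $g_i(\partial\Delta)$ to $g_{d-i}(\partial\Delta)$; combining these lets you compare the asymmetry $h_i(\Delta,\partial\Delta)-h_{d-i}(\Delta,\partial\Delta)$ with the asymmetry $h_i(\Delta)-h_{d-i}(\Delta)$, but it cannot produce a relation that pins the relative $h$-number at $i$ against the absolute $h$-number at $d-i$. That relation is a statement about the interior face numbers of $\Delta$ itself and requires the local structure of $\Delta$ as a manifold with boundary (links of interior faces are homology spheres, links of boundary faces are homology balls), established either by a link-by-link Euler characteristic count or by local cohomology; global duality for the pair plus closed Dehn--Sommerville for the boundary alone is not enough. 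The paper simply quotes Gr\"abe's theorem at this point.

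Once that input is supplied (cited or proved), the rest of your plan goes through and coincides with the paper's computation: duality rewrites $(-1)^{d-i}\tilde{\chi}(\Delta)$ as a sum of Betti numbers of $\Delta$ in degrees $\le d-1-i$ minus relative Betti numbers in degrees $\le i-1$, which is exactly what turns $h_{d-i}$ and $h_i$ into $h''_{d-i}(\Delta)$ and $h''_i(\Delta,\partial\Delta)$; no Vandermonde-type identity is actually needed. For the boundary cases, the cleaner bookkeeping is $h''_0(\Delta)=1=\tilde{b}_{d-1}(\Delta,\partial\Delta)=h''_d(\Delta,\partial\Delta)$ and $h''_0(\Delta,\partial\Delta)=0=\tilde{b}_{d-1}(\Delta)=h''_d(\Delta)$, where the last vanishing uses connectedness together with $\partial\Delta\neq\emptyset$ (via $\tilde{b}_{d-1}(\Delta)=\tilde{b}_0(\Delta,\partial\Delta)=0$), which is the precise role of the connectivity hypothesis you gestured at.
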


\noindent When $\Delta$ is a homology $(d-1)$-ball, Proposition \ref{Dehn-Somm} reduces to the fact that $h_i(\Delta,\partial\Delta)=h_{d-i}(\Delta)$ for all $0\leq i\leq d$. This case of Proposition \ref{Dehn-Somm} is well-known, see \cite[II Section 7]{Stanley-96}.

To state our main results, we first recall the famous Lower Bound Theorem of Barnette \cite{Barnette-73}, Kalai \cite{Kalai-87}, Fogelsanger \cite{Fogelsanger-88}, and Tay \cite{Tay-95} asserting that if $\Delta$ is a connected normal pseudomanifold without boundary of dimension at least two, then $g_2(\Delta)\geq 0$; furthermore, if $\dim\Delta\geq 3$, then $g_2(\Delta)=0$ if and only if $\Delta$ is a stacked sphere. This theorem was recently significantly strengthened to bound $g_2(\Delta)$ by certain topological invariants of $\Delta$ as follows. Below we denote by $\tilde{b}_i(-; \field):=\dim_\field \tilde{H}_i(-;\field)$ the $i$-th reduced Betti number computed over a field $\field$.

\begin{theorem} \label{LBT-closed} {\rm \cite[Theorem 5.3]{Murai-15}} 
Let $\Delta$ be a (not necessarily connected) $d$-dimensional normal pseudomanifold without boundary. If $d\geq 3$ then
\begin{equation} \label{rig-ineq1}
g_2(\Delta)\geq \binom{d+2}{2} \left(\tilde{b}_1(\Delta; \field)-\tilde{b}_0(\Delta; \field)\right).
\end{equation}
Moreover, $g_2(\Delta)= \binom{d+2}{2} \left(\tilde{b}_1(\Delta; \field)-\tilde{b}_0(\Delta; \field)\right)$ if and only if each connected component of $\Delta$ is a stacked manifold.
\end{theorem}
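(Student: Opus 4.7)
The plan is to reduce to the connected case and then combine a rigidity argument with a local-cohomology analysis of the Stanley--Reisner ring. For the reduction, observe that $g_2$ is additive over the connected components $\Delta_1,\ldots,\Delta_k$ of $\Delta$, as is $\tilde b_1$, while $\tilde b_0(\Delta;\field)=k-1$. Summing the (to-be-established) connected inequality $g_2(\Delta_i)\geq \binom{d+2}{2}\tilde b_1(\Delta_i;\field)$ gives $g_2(\Delta)\geq \binom{d+2}{2}\tilde b_1(\Delta;\field)$, which is stronger than the stated bound by exactly $\binom{d+2}{2}(k-1)\geq 0$. So I may assume $\Delta$ is connected and aim for
\[
g_2(\Delta)\geq \binom{d+2}{2}\tilde b_1(\Delta;\field).
\]

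Under this assumption, the starting point is Fogelsanger's theorem that the graph $G(\Delta)$ of a connected normal pseudomanifold of dimension $\geq 3$ is generically $(d+1)$-rigid, so that the space of $(d+1)$-stresses on $G(\Delta)$ has dimension exactly $g_2(\Delta)=f_1(\Delta)-(d+1)f_0(\Delta)+\binom{d+2}{2}$. To sharpen $g_2\geq 0$ to a topological lower bound, I would pass to the Stanley--Reisner ring $A=\field[\Delta]$ and its Artinian reduction $A/\Theta A$ by a generic linear system of parameters $\Theta=(\theta_1,\ldots,\theta_{d+1})$. The Hilbert function of $A/\Theta A$ is a version of the $h$-vector corrected by the local cohomology of $A$, which by Reisner's criterion is determined by the Betti numbers of $\Delta$. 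The key step is to show that multiplication by a further generic linear form $\omega:(A/\Theta A)_1\to (A/\Theta A)_2$ fails to be injective on a subspace of dimension at least $\binom{d+2}{2}\tilde b_1(\Delta;\field)$; each nontrivial class in $H_1(\Delta;\field)$, interpreted as an edge cycle, should contribute $\binom{d+2}{2}$ independent ``obstruction stresses'' corresponding to the infinitesimal motions of $\R^{d+1}$. Combining this stress count with the rigidity dimension formula yields the desired bound.

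For the equality case, the plan is an inductive vertex-link analysis. If equality holds, the stress count is tight and a standard link argument forces $g_2(\lk_\Delta v)=0$ for every vertex $v$; by the classical lower bound theorem for homology spheres each such link is then a stacked $(d-1)$-sphere. One then argues that a connected normal pseudomanifold whose vertex links are all stacked spheres and whose $g_2$ is entirely accounted for by the topological contribution must be built from a single stacked sphere by a sequence of handle additions, i.e., is a stacked manifold. I expect the main obstacle to be this equality analysis: the inequality itself is bookkeeping once the rigidity and Reisner machinery are in place, but verifying the inductive step --- that removing a handle or peeling off a stacked-link vertex yields a smaller normal pseudomanifold whose Betti numbers and $f$-vector still satisfy tight $g_2$ --- requires careful control of how both sides of the bound transform under the move, and of how the topologically forced stresses degenerate in the reduced complex.
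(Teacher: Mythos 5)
First, a framing remark: this paper does not actually prove Theorem \ref{LBT-closed} --- it is quoted from \cite[Theorem 5.3]{Murai-15} --- so the comparison is with that proof, whose machinery (Fogelsanger rigidity as injectivity of generic linear forms in degrees $1\to2$, Hochster's formula, and the $\sigma$-/$\mu$-number averaging of Lemma \ref{5.3}) is exactly what Sections 4--7 here generalize to the relative setting. Measured against it, your proposal has two genuine gaps in the inequality alone. The reduction to the connected case is miscalculated: $g_2$ is not additive over components, since $g_2(\Delta)=f_1-(d+1)f_0+\binom{d+2}{2}$ gives $g_2(\Delta)=\sum_i g_2(\Delta_i)-(k-1)\binom{d+2}{2}$ for $k$ components; your intermediate claim $g_2(\Delta)\geq\binom{d+2}{2}\tilde b_1(\Delta;\field)$ is false for disconnected $\Delta$ (two disjoint copies of the boundary of a $(d+1)$-simplex have $g_2=-\binom{d+2}{2}<0$), and the correct bookkeeping shows the disconnected bound is \emph{equivalent} to the sum of the connected ones, with equality exactly when every component is extremal --- your ``stronger by $\binom{d+2}{2}(k-1)$'' version would wrongly force strict inequality whenever $k\geq2$, contradicting the equality clause. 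More seriously, your key step is backwards: for a connected normal pseudomanifold the generic map $\times\omega\colon(\field[\Delta]/\Theta\field[\Delta])_1\to(\field[\Delta]/\Theta\field[\Delta])_2$ is \emph{injective} (this is Theorem \ref{4.7}, applied with the dimension shift, since such a $\Delta$ is a minimal cycle complex), so there is no kernel of dimension $\binom{d+2}{2}\tilde b_1$; rigidity rules out precisely the space you want. If you instead meant the cokernel (the stress space), then the assertion that each class in $H_1$ contributes $\binom{d+2}{2}$ independent stresses \emph{is} the theorem, and no construction is offered: the known socle construction of Novik--Swartz produces only $\binom{d+1}{2}\tilde b_1$ such elements in degree $2$, and supplying the missing $(d+1)\tilde b_1$ is exactly where the orientable case uses Poincar\'e duality and the $h''$-Dehn--Sommerville relations (as in Section 3), while the general case needs the graded Betti number bounds and the $\sigma$-/$\mu$-averaging. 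Obtaining $\binom{d+2}{2}$ rather than $\binom{d+1}{2}$ is the crux, and your sketch has no mechanism for it.

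The equality case is also not established. Deducing that every vertex link is a stacked sphere is consistent with the actual argument, but the concluding implication ``all links stacked plus tight $g_2$ $\Rightarrow$ stacked manifold'' is the hard part, not a routine induction: for $d=3$ the link condition alone is genuinely insufficient (the boundary of a cyclic $4$-polytope has all vertex links stacked yet $g_2>0$; see Section 9 and \cite[Remark 8.5]{Kalai-87}), so any peeling/handle-removal induction must use the global equality in an essential way, and for $d\geq4$ the statement that a manifold with all links stacked lies in Walkup's class is itself a nontrivial theorem of Kalai \cite[\S 9]{Kalai-87} that you would have to invoke. You flag this step as unverified yourself; as written, the proposal proves neither the inequality with the constant $\binom{d+2}{2}$ nor the characterization of equality.
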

For manifolds, this result was conjectured by Kalai \cite{Kalai-87}; it was proved in the above generality by the first author. For orientable homology manifolds, the inequality part was originally verified in \cite[Theorem 5.2]{Novik-Swartz-09:Socles}. 
(In \cite{Murai-15,Novik-Swartz-09:Socles}, it was assumed that $\Delta$ is connected, 
but the disconnected case follows easily from the connected one.)

How does the situation change for manifolds with boundary? Our first main result is that the inequality part applies almost verbatim to normal pseudomanifolds with boundary: the only adjustment we need to make is to replace $\Delta$ with the relative complex $(\Delta, \partial\Delta)$ and the Betti numbers of $\Delta$ with the relative Betti numbers of $(\Delta, \partial\Delta)$. (Note that if $\Delta$ has no boundary, then $(\Delta, \partial\Delta)=(\Delta,\emptyset)=\Delta$.) More precisely, we have:

\begin{theorem} \label{main1}
Let $\Delta$ be a (not necessarily connected) $d$-dimensional normal pseudomanifold with boundary. If $d\geq 3$ then
\begin{align} \label{main1-inequality}
g_2(\Delta,\partial \Delta)\geq \binom{d+2}{2} \left(\tilde{b}_1(\Delta, \partial\Delta; \field)-\tilde{b}_0(\Delta,\partial\Delta; \field)\right).
\end{align}
Furthermore, if $\Delta$ is an $\field$-homology manifold with boundary
for which \eqref{main1-inequality} is an equality
then the link of each interior vertex of $\Delta$ is a stacked sphere and the link of each boundary vertex of $\Delta$ is obtained from an $\field$-homology ball that has no interior vertices by forming connected sums with the boundary complexes of simplices.
\end{theorem}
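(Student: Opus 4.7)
The plan is to deduce Theorem \ref{main1} from its closed analogue, Theorem \ref{LBT-closed}, via the doubling construction. Let $2\Delta$ denote the closed $d$-dimensional normal pseudomanifold obtained by gluing two copies of $\Delta$ along $\partial\Delta$; it inherits the $\field$-homology-manifold property from $\Delta$. A direct face count in which each simplex is tallied once for every copy of $\Delta$ containing it yields
\[
f_i(2\Delta)=f_i(\Delta)+f_i(\Delta,\partial\Delta) \quad \text{for every } i,
\]
so the same additivity passes to the $h$-vector and hence $g_2(2\Delta)=g_2(\Delta)+g_2(\Delta,\partial\Delta)$. Moreover, the long exact sequence of the pair $(2\Delta,\Delta)$ combined with the excision isomorphism $H_i(2\Delta,\Delta)\cong H_i(\Delta,\partial\Delta)$ (or equivalently the Mayer--Vietoris sequence for $2\Delta=\Delta\cup\Delta$ with intersection $\partial\Delta$) provides a way to express $\tilde{b}_0(2\Delta;\field)$ and $\tilde{b}_1(2\Delta;\field)$ in terms of the Betti numbers of $\Delta$, $\partial\Delta$, and the pair $(\Delta,\partial\Delta)$.

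For the inequality \eqref{main1-inequality}, I would apply Theorem \ref{LBT-closed} to $2\Delta$ and use the additivities above to extract the piece corresponding to $(\Delta,\partial\Delta)$. When $\partial\Delta=\emptyset$, $2\Delta$ is two disjoint copies of $\Delta$ and the statement collapses to Theorem \ref{LBT-closed}; otherwise, the subtraction should recover \eqref{main1-inequality} after some bookkeeping. The nontrivial check is that the resulting Betti-number expression dominates $\tilde{b}_1(\Delta,\partial\Delta;\field)-\tilde{b}_0(\Delta,\partial\Delta;\field)$; extra care is required when $\Delta$ or $\partial\Delta$ is disconnected, because then the connecting homomorphisms contribute nontrivially. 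Proposition \ref{Dehn-Somm} should provide the required additional relations among $h''$-numbers and Betti numbers.

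For the equality statement, suppose $\Delta$ is an $\field$-homology manifold with boundary for which equality holds in \eqref{main1-inequality}. The additivities force equality in Theorem \ref{LBT-closed} for every component of $2\Delta$, whence each component of $2\Delta$ is a stacked manifold and every vertex link of $2\Delta$ is a stacked $(d-1)$-sphere. For an interior vertex $v$ of $\Delta$ one has $\lk(v,\Delta)=\lk(v,2\Delta)$, giving the first part of the claim. For a boundary vertex $v$, writing $B:=\lk(v,\Delta)$, the link $\lk(v,2\Delta)$ is the double $2B$, and the remaining task reduces to the following structural claim: if $2B$ is a stacked $(d-2)$-sphere, then the homology ball $B$ is obtained from a homology ball with no interior vertices by connect-summing with boundaries of simplices. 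I expect this structural claim to be the principal obstacle. The natural attack is induction on the number of interior vertices of $B$: find a vertex of $2B$ of degree $d-1$ with link $\partial\sigma^{d-2}$ (such a vertex exists once $2B$ has more than $d$ vertices, by the standard theory of stacked spheres), argue that it must lie in the interior of one of the two copies of $B$ rather than on the shared equator $\partial B$, split off the corresponding $\partial\sigma^{d-1}$ summand from $B$ at this vertex, and invoke the induction hypothesis on the simpler ball.
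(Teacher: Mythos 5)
Your additivity bookkeeping is correct: $f_i(2\Delta)=f_i(\Delta)+f_i(\Delta,\partial\Delta)$ for all $i\geq -1$, hence $g_2(2\Delta)=g_2(\Delta)+g_2(\Delta,\partial\Delta)$. But the "extraction" step is not bookkeeping, and it is where the argument breaks. Applying Theorem \ref{LBT-closed} to $2\Delta$ only bounds the \emph{sum} $g_2(\Delta)+g_2(\Delta,\partial\Delta)$ from below; to isolate a lower bound for $g_2(\Delta,\partial\Delta)$ you would need an \emph{upper} bound of the form $g_2(\Delta)\leq \binom{d+2}{2}\bigl[(\tilde b_1-\tilde b_0)(2\Delta)-(\tilde b_1-\tilde b_0)(\Delta,\partial\Delta)\bigr]$, and no such bound exists -- the lower bound theorems never bound $g_2$ from above by topology. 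The case of a homology $d$-ball already kills the approach: there $2\Delta$ is a homology $d$-sphere, so Theorem \ref{LBT-closed} gives only $g_2(\Delta)+g_2(\Delta,\partial\Delta)\geq 0$, whereas Theorem \ref{main1} asserts $g_2(\Delta,\partial\Delta)=f_1(\Delta,\partial\Delta)-(d+1)f_0(\Delta,\partial\Delta)\geq 0$, a statement about \emph{interior} faces that the double simply forgets (interior and boundary edges of $\Delta$ become indistinguishable in $2\Delta$). Since $g_2(\Delta)$ can be arbitrarily large for balls (e.g.\ the boundary complex of a cyclic $(d+1)$-polytope with one facet removed), the doubled inequality carries no information about $g_2(\Delta,\partial\Delta)$ alone. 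The same defect undoes your equality analysis: equality in \eqref{main1-inequality} for $\Delta$ does not force equality in Theorem \ref{LBT-closed} for $2\Delta$ (a ball with $g_2(\Delta,\partial\Delta)=0$ but $g_2(\Delta)>0$, e.g.\ one with no interior vertices or edges but a large $1$-skeleton, has $g_2(2\Delta)>0$ while $\tilde b_1(2\Delta)-\tilde b_0(2\Delta)=0$), so you cannot conclude that the vertex links of $2\Delta$ are stacked, and the structural claim about doubles of balls never gets off the ground. A secondary problem: in the generality of the inequality, $\partial\Delta$ of a normal pseudomanifold need not itself be a normal pseudomanifold (the paper must add this as a hypothesis in Corollary \ref{h_2-vs-int-vert}), so $2\Delta$ need not even satisfy the hypotheses of Theorem \ref{LBT-closed}.

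For contrast, the paper closes off the boundary not by doubling but by coning, $\Lambda=\Delta\cup(u\ast\partial\Delta)$, which preserves the relative structure because $(\Lambda,u\ast\partial\Delta)=(\Delta,\partial\Delta)$ as relative complexes. Fogelsanger rigidity of the minimal cycle complex $\Lambda$ (Theorem \ref{4.7}, Lemma \ref{4.8}) yields generic injectivity in degrees $1\to 2$ for the module $\field[\Delta,\partial\Delta]$, hence the graded Betti number bound of Theorem \ref{4.9}; via Hochster's formula this bounds $\tilde\sigma_0$ of a normal pseudomanifold with boundary (Proposition \ref{6.1}). Applying this to links of boundary vertices, the non-relative bound (Lemma \ref{6.2}) to links of interior vertices, and summing over vertices gives the stronger $\mu$-number inequality of Theorem \ref{6.3}, from which \eqref{main1-inequality} follows by the relative Morse inequality (Corollary \ref{muinequality}); the equality statement is then obtained from the analysis of extremal homology balls in Proposition \ref{missing-faces}. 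The relative machinery is doing genuine work here that the doubling construction cannot replace.
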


As an easy consequence we obtain

\begin{corollary} \label{h_2-vs-int-vert}
Let $\Delta$ be a (not necessarily connected) $d$-dimensional normal pseudomanifold with boundary whose boundary is also a normal pseudomanifold.
If $d \geq 4$, then
\begin{align*}
h_2(\Delta)\geq f_0(\Delta,\partial\Delta)\!+\!\binom{d+2}{2}\! \left(\tilde{b}_1(\Delta, \partial\Delta; \field)\!-\!\tilde{b}_0(\Delta,\partial\Delta; \field)\right) \!+\! 
\binom{d+1}{2}\! \left(\tilde{b}_1(\partial\Delta; \field)\!-\!\tilde{b}_0(\partial\Delta; \field)\right),
\end{align*}
and if $d=3$, then
$$h_2(\Delta) \geq f_0(\Delta,\partial \Delta)+
10\left(\tilde{b}_1(\Delta, \partial\Delta; \field)-\tilde{b}_0(\Delta, \partial\Delta; \field)\right) + 3\left(\tilde{b}_1(\partial\Delta; \field)-2\tilde{b}_0(\partial\Delta; \field)\right).$$
\end{corollary}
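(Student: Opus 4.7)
The proof plan rests on a single identity that expresses $h_2(\Delta)$ as a sum of $g_2$ of the relative complex, $g_2$ of the boundary, and the number of interior vertices:
\[
h_2(\Delta) = f_0(\Delta, \partial\Delta) + g_2(\Delta, \partial\Delta) + g_2(\partial\Delta).
\]
Once this identity is in hand, the corollary reduces to invoking Theorem~\ref{main1} for $(\Delta,\partial\Delta)$ and Theorem~\ref{LBT-closed} for $\partial\Delta$. To derive the identity, I would expand $h_2(\Delta) = f_1(\Delta) - d f_0(\Delta) + \binom{d+1}{2}$, split each $f_i(\Delta)$ as $f_i(\Delta, \partial\Delta) + f_i(\partial\Delta)$, and recognize the interior piece $f_1(\Delta,\partial\Delta) - d f_0(\Delta,\partial\Delta)$ as $h_2(\Delta,\partial\Delta) = g_2(\Delta,\partial\Delta) + f_0(\Delta,\partial\Delta)$ (using that $f_{-1}(\Delta,\partial\Delta)=0$, so $h_1(\Delta,\partial\Delta) = f_0(\Delta,\partial\Delta)$). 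The remaining piece $f_1(\partial\Delta) - d f_0(\partial\Delta) + \binom{d+1}{2}$ equals $g_2(\partial\Delta)$, since $\partial\Delta$ has dimension $d-1$ and $\binom{d}{2} + d = \binom{d+1}{2}$.

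For $d\geq 4$, Theorem~\ref{main1} yields $g_2(\Delta, \partial\Delta) \geq \binom{d+2}{2}\bigl(\tilde{b}_1(\Delta,\partial\Delta;\field) - \tilde{b}_0(\Delta,\partial\Delta;\field)\bigr)$, while Theorem~\ref{LBT-closed} applies to $\partial\Delta$ --- a $(d-1)$-dimensional closed normal pseudomanifold of dimension at least $3$ --- giving $g_2(\partial\Delta) \geq \binom{d+1}{2}\bigl(\tilde{b}_1(\partial\Delta;\field) - \tilde{b}_0(\partial\Delta;\field)\bigr)$. Substituting both bounds into the identity produces the first inequality verbatim.

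The only delicate case is $d=3$, where $\partial\Delta$ is a disjoint union of closed triangulated surfaces and Theorem~\ref{LBT-closed} no longer applies. Here I would compute $g_2(\partial\Delta)$ directly: exploiting that every edge lies in exactly two triangles of $\partial\Delta$, so $2f_1(\partial\Delta) = 3f_2(\partial\Delta)$, together with the Euler relation, gives $f_1(\partial\Delta) = 3 f_0(\partial\Delta) - 3\chi(\partial\Delta)$ and hence
\[
g_2(\partial\Delta) \;=\; 6 - 3\chi(\partial\Delta) \;=\; 3 + 3\tilde{b}_1(\partial\Delta;\field) - 3\tilde{b}_0(\partial\Delta;\field) - 3\tilde{b}_2(\partial\Delta;\field).
\]
Since $\tilde{b}_2(\partial\Delta;\field)$ is precisely the number of orientable connected components of $\partial\Delta$, it is bounded above by the total component count $\tilde{b}_0(\partial\Delta;\field)+1$; substituting yields $g_2(\partial\Delta) \geq 3\bigl(\tilde{b}_1(\partial\Delta;\field) - 2\tilde{b}_0(\partial\Delta;\field)\bigr)$. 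Combined with Theorem~\ref{main1} (noting $\binom{5}{2}=10$) and the identity, this gives the $d=3$ inequality. No step is a serious obstacle: the identity is a routine expansion, the $d\geq 4$ case is a clean application of the two lower-bound theorems, and the $d=3$ case rests only on the elementary top-Betti bound for a closed $2$-pseudomanifold.
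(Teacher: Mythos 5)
Your proof is correct and takes essentially the same route as the paper: the identity $h_2(\Delta)=f_0(\Delta,\partial\Delta)+g_2(\Delta,\partial\Delta)+g_2(\partial\Delta)$ (which the paper obtains from Lemma \ref{h-and-g}), Theorems \ref{main1} and \ref{LBT-closed} for $d\geq 4$, and for $d=3$ a direct Euler-characteristic evaluation of $g_2(\partial\Delta)$ that is exactly the Dehn--Sommerville identity the paper cites over $\ZZ/2\ZZ$ together with the comparison of Betti numbers across fields. One harmless imprecision: over a field of characteristic $2$, $\tilde{b}_2(\partial\Delta;\field)$ counts \emph{all} components rather than only the orientable ones, but since you only use the bound $\tilde{b}_2(\partial\Delta;\field)\leq \tilde{b}_0(\partial\Delta;\field)+1$, which holds over every field, the argument is unaffected.
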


\noindent For homology manifolds with boundary this result strengthens \cite[Theorem 5.1]{Novik-Swartz-09:DS}, which in itself is a strengthening of \cite[Theorem 11.1]{Kalai-87}.

As for higher-dimensional face numbers, it was proved in \cite[Eq.~(9)]{Novik-Swartz-09:DS} that if $\Delta$ is a $d$-dimensional \emph{orientable} $\field$-homology manifold (without boundary) all of whose vertex links have the weak Lefschetz property (WLP, for short), then the following generalization of eq.~\eqref{rig-ineq1} holds:
\[
g_r(\Delta)\geq \binom{d+2}{r}\sum_{j=1}^r (-1)^{r-j}\cdot \tilde{b}_{j-1}(\Delta; \field) \quad \mbox{for all } 1\leq r\leq (d+1)/2.
\]
Our second main result asserts that the relative version of the same statement applies to \textbf{all} (orientable or non-orientable) homology manifolds with or without boundary all of whose vertex links have the WLP:

\begin{theorem} \label{main2}
Let $\Delta$ be a $d$-dimensional $\field$-homology manifold with or without boundary. If all vertex links of $\Delta$ have the WLP, then 
\[
g_r(\Delta, \partial\Delta)\geq \binom{d+2}{r}\sum_{j=1}^r (-1)^{r-j}\cdot\tilde{b}_{j-1}(\Delta, \partial\Delta; \field) \quad \mbox{for all } 1\leq r\leq (d+1)/2.
\]
Moreover, if equality holds for some $r \leq d/2$, then the link of each interior vertex of $\Delta$ is an $(r-1)$-stacked homology sphere.
\end{theorem}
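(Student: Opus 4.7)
\noindent
The plan is to transport the closed-manifold strategy of \cite[Section 5]{Novik-Swartz-09:DS} into the relative setting, with the Stanley--Reisner ring $\field[\Delta]$ replaced by the relative Stanley--Reisner module $M := \field[\Delta,\partial\Delta]$ over the polynomial ring $R$ on the vertex set of $\Delta$. After choosing a sufficiently generic linear system of parameters $\Theta = \theta_1,\dots,\theta_{d+1}$ for $M$ and forming the artinian reduction $A := M/\Theta M$, I would, in the spirit of Schenzel's argument, further quotient $A$ by an appropriate piece of its socle to obtain a module $A''$ whose Hilbert function equals $h''_i(\Delta,\partial\Delta)$. The identification $\dim_\field A''_i = h''_i(\Delta,\partial\Delta)$ is the natural relative analogue of the Schenzel computation and will follow from Hochster-type formulas for the local cohomology of $M$ together with Proposition~\ref{Dehn-Somm}.

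\medskip
\noindent
The heart of the argument is a relative weak Lefschetz statement: for a sufficiently generic linear form $\omega \in R_1$ and every $1 \leq r \leq (d+1)/2$, the multiplication map $\cdot\,\omega : A''_{r-1} \to A''_r$ is injective, so that
\[
\dim_\field (A''/\omega A'')_r \;=\; h''_r(\Delta,\partial\Delta) - h''_{r-1}(\Delta,\partial\Delta) \;=\; g_r(\Delta,\partial\Delta).
\]
I would prove this by induction on $d$ via a vertex-link analysis, using the standard long exact sequence that relates $A''$ to the analogous modules attached to the stars and links of the vertices of $\Delta$. At an interior vertex $v$, the WLP hypothesis on $\lk_\Delta v$ supplies the needed Lefschetz map directly; at a boundary vertex $v$, the corresponding module is essentially $\field[\lk_\Delta v, \partial(\lk_\Delta v)]$, which by the inductive hypothesis inherits a relative Lefschetz map from the WLP assumption applied to vertex links inside $\lk_\Delta v$.

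\medskip
\noindent
Granting the relative WLP, the inequality follows from a Morse-type lower bound in the spirit of Bagchi and Datta. Using the relative $\sigma$- and $\mu$-numbers developed earlier in the paper, one shows by a direct resolution/filtration argument that
\[
\dim_\field(A''/\omega A'')_r \;\geq\; \binom{d+2}{r}\,\mu_{r-1}(\Delta,\partial\Delta;\field),
\]
and then applies weak Morse-type inequalities among the $\mu$-numbers to conclude that $\mu_{r-1}(\Delta,\partial\Delta;\field) \geq \sum_{j=1}^r (-1)^{r-j} \tilde{b}_{j-1}(\Delta,\partial\Delta;\field)$. Combining these two estimates gives the stated inequality; in fact, it gives the stronger statement with $\mu$-numbers in place of Betti numbers, which is the form advertised in the abstract.

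\medskip
\noindent
I expect the main obstacle to be the proof of the relative WLP at boundary vertices, where the interaction between the socle quotient defining $A''$ and the long exact sequence on links must be tracked carefully in the presence of $\partial\Delta$; this is where the shift from $\field[\Delta]$ to $\field[\Delta,\partial\Delta]$ earns its keep, since it renders both the duality (Proposition~\ref{Dehn-Somm}) and the link-restriction manifest. For the equality case with $r \leq d/2$, a tight inequality collapses every intermediate Morse-type bound; this in particular forces $g_r$ to vanish on each interior vertex link, and by the characterization of the equality case in the closed WLP lower bound theorem it forces each such link to be an $(r-1)$-stacked homology sphere.
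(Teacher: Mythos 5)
Your plan has a genuine gap at its center: the ``relative weak Lefschetz statement'' that a generic $\omega$ multiplies injectively $A''_{r-1}\to A''_r$, where $A''$ is a socle-modified artinian reduction of the \emph{global} module $\field[\Delta,\partial\Delta]$, is an unproven (and very strong) claim, and the proposed induction over vertex links is not a routine argument. Even in the closed orientable setting, what Novik--Swartz and Swartz actually obtain from the WLP of the links is \emph{surjectivity} of $\times\omega$ in degrees above the middle (cf.\ Lemma~\ref{h''>h'}), which is then flipped to a low-degree statement only through the Dehn--Sommerville relations; no low-degree injectivity for a socle quotient is ever established. Worse, both ingredients of your scaffolding are orientability-bound: the identification $\dim_\field A''_i=h''_i(\Delta,\partial\Delta)$ is supposed to come from Proposition~\ref{Dehn-Somm}, which holds only for orientable manifolds, and the Matlis-duality trick that turns WLP surjectivity into injectivity on the relative module works because $\field[\lk_\Delta v,\partial\lk_\Delta v]$ is the canonical module of $\field[\lk_\Delta v]$ --- true for balls and spheres, but false for a general manifold with boundary, so there is no global analogue. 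Since the orientable case is exactly the warm-up already done in Section 3, your construction at best reproves that case; the whole point of Theorem~\ref{main2} is the non-orientable case, which your architecture does not reach. (There is also an arithmetic slip: injectivity would give $\dim_\field(A''/\omega A'')_r=h''_r-h''_{r-1}$, which is \emph{not} $g_r(\Delta,\partial\Delta)$; the discrepancy is precisely the Betti-number corrections that, after the Dehn--Sommerville manipulation, produce the binomial coefficient in the statement.)

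The paper's actual proof is local and never forms an artinian reduction of $\field[\Delta,\partial\Delta]$: for each vertex link (a homology ball or sphere with the WLP) it bounds alternating sums of graded Betti numbers of $\field[\lk,\partial\lk]$ (Theorem~\ref{4.6}, via Lemma~\ref{4.5} and Proposition~\ref{4.3}), converts these via the relative Hochster formula into bounds on alternating sums of $\tilde\sigma$-numbers (Proposition~\ref{5.2}), and then sums over all vertices using the identity of Lemma~\ref{5.4} to telescope into $g_r(\Delta,\partial\Delta)\geq\binom{d+2}{r}\bigl\{\sum_{k=1}^r(-1)^{r-k}\mu_{k-1}(\Delta,\partial\Delta)+(-1)^rg_0(\Delta,\partial\Delta)\bigr\}$ (Theorem~\ref{5.5}); the Morse inequalities of Corollary~\ref{muinequality}(ii) then replace the alternating sum of $\mu$-numbers by the alternating sum of Betti numbers. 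Your closing steps do not match this: the intermediate bound $\dim_\field(A''/\omega A'')_r\geq\binom{d+2}{r}\mu_{r-1}$ is asserted without proof and has the wrong shape (the correct bound involves the alternating sum of $\mu$-numbers, and it is obtained by summing link-level $\tilde\sigma$-bounds, not by a resolution/filtration argument on a global quotient), and Corollary~\ref{muinequality} only compares alternating sums with alternating sums, so it does not yield $\mu_{r-1}\geq\sum_{j=1}^r(-1)^{r-j}\tilde b_{j-1}$. Your treatment of equality ends in the right place --- forcing $g_r$ of each interior vertex link to vanish and invoking the stackedness characterization (Theorem~\ref{GLBTsphere}) --- but in the paper this vanishing comes from the equality statements of Proposition~\ref{5.2} and Theorem~\ref{4.6} applied linkwise, machinery your proposal does not supply.
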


Theorems \ref{main1} and \ref{main2} are sharp, see Remark \ref{sharpness}.
In the case of orientable manifolds with boundary, the proofs of Theorems \ref{main1} and \ref{main2} follow the same ideas as the proofs of \cite[Eq.~(9)]{Novik-Swartz-09:DS} and \cite[Theorem 5.2]{Novik-Swartz-09:Socles}. The treatment of the non-orientable case requires much more work: it requires (i) developing a version of Bagchi and Datta's $\sigma$- and $\mu$-numbers introduced in \cite{Bagchi-Datta-14} (see also \cite{Bagchi:mu-vector}) for relative simplicial complexes, and (ii) refining the methods from \cite{Murai-15} to bound certain alternating sums of the graded Betti numbers of the Stanley-Reisner modules. In fact, we prove stronger versions of Theorems \ref{main1} and \ref{main2} with the reduced Betti numbers replaced by the $\mu$-numbers, see Theorems \ref{5.5} and \ref{6.3} (along with Corollary \ref{mu-equal-charact}).
These results, especially Theorem \ref{5.5}, appear to be new even for the case of balls and spheres, see Remark \ref{remark-new}. Since we are using two different methods in our proofs, it is sometimes more convenient for us to work with $(d-1)$-dimensional complexes and other times with $d$-dimensional ones. To avoid any confusion, the dimension used is explicitly stated in each result.

The structure of the rest of the paper is as follows. In Section 2 we review basics of simplicial complexes and relative simplicial complexes, and also of Stanley-Reisner rings and modules. Section 3 serves as a warm-up for the rest of the paper. There we verify Proposition \ref{Dehn-Somm}, prove the inequality parts of Theorems \ref{main1} and \ref{main2} for the case of orientable homology manifolds, and derive Corollary \ref{h_2-vs-int-vert} from Theorem \ref{main1}. In Section 4 we develop an analogue of $\sigma$- and $\mu$-numbers for relative simplicial complexes. In Section 5 we derive upper bounds on certain (alternating sums of) graded Betti numbers of Stanley-Reisner modules of normal pseudomanifolds with boundary and also of Stanley-Reisner rings and modules of homology spheres and homology balls with the WLP. Using results of Sections 4 and 5, we prove (a strengthening of) Theorem~\ref{main2} in Section 6 and (a strengthening of) Theorem~\ref{main1} in Section 7.  In Section 8, we establish a criterion characterizing homology $d$-balls with $g_2(\Delta, \partial\Delta)=0$.
We close in Section 9 by showing that, in contrast with the Betti numbers, the $\mu$-numbers always detect the non-vanishing of the fundamental group; we also provide several additional remarks and open problems.


\section{Preliminaries} \label{sect:preliminaries}
In this section we review several basic definitions and results on simplicial complexes and relative simplicial complexes, as well as on the Stanley-Reisner rings and modules.

A \textbf{simplicial complex} $\Delta$ on a (finite) ground set $V = V(\Delta)$ is a collection of subsets of $V(\Delta)$ that is closed under inclusion. (We do not assume that every singleton $\{v\}\subseteq V$ is an element of $\Delta$.) The elements $F \in \Delta$ are called \textbf{faces} and the maximal faces of $\Delta$ under inclusion are called \textbf{facets}. We say that $\Delta$ is \textbf{pure} if all of its facets have the same cardinality. The \textbf{dimension} of a face $F \in \Delta$ is $\dim(F) = |F|-1$ and the dimension of $\Delta$ is $\dim(\Delta) = \max\{\dim(F)\ : \ F \in \Delta\}$. We refer to $i$-dimensional faces as \textbf{$i$-faces}; the $0$-faces are also called \textbf{vertices}. 

If $\Delta$ is a simplicial complex and $F \in \Delta$ is a face, the local structure of $\Delta$ around $F$ is described by the (closed) \textbf{star} of $F$ in $\Delta$: $\st_{\Delta}(F):= \{G \in \Delta\ : \ F \cup G \in \Delta\}$. Similarly, the \textbf{link} of $F$ in $\Delta$ and the \textbf{deletion} of $F$ from $\Delta$ are defined as
$$\lk_{\Delta}(F):= \{G \in \st_\Delta(F)\ : F \cap G = \emptyset\} \quad \mbox{and} \quad \Delta\setminus F:=\{G \in \Delta\ : \ F\not\subseteq G\}.$$
(Our convention is that $\lk_\Delta(F)=\emptyset$ if $F \not \in \Delta$.)
When $F = \{v\}$ is a single vertex, we write $\lk_{\Delta}(v)$ and $\Delta\setminus v$ in place of $\lk_{\Delta}(\{v\})$ and $\Delta\setminus\{v\}$. 

Let $\Delta'$ and $\Delta''$ be pure simplicial complexes of the same dimension on disjoint ground sets. Let $F'$ and $F''$ be facets of $\Delta'$ and $\Delta''$ respectively, and let $\varphi: F' \rightarrow F''$ be a bijection between the vertices of $F'$ and the vertices of $F''$. The \textbf{connected sum} of $\Delta'$ and $\Delta''$, denoted $\Delta' \#_{\varphi} \Delta''$ or simply $\Delta' \# \Delta''$, is the simplicial complex obtained by identifying the vertices of $F'$ and $F''$ (and all faces on those vertices) according to the bijection $\varphi$ and removing the facet corresponding to $F'$ (which has been identified with $F''$). 

Given a pair of simplicial complexes $\Gamma\subseteq \Delta$, we let $\Psi:=(\Delta,\Gamma)$ be the corresponding \textbf{relative simplicial complex}: the faces of $\Psi$ are precisely the faces of $\Delta$ not contained in $\Gamma$ and the dimension of $\Psi$ is the maximum dimension of its faces. For instance, $(\Delta,\emptyset)=\Delta$.

Let $\field$ be a field. For a simplicial complex $\Delta$ of dimension $d-1$, let
\begin{align*}C_\bullet(\Delta;\field) &: \; 0 \longrightarrow C_{d-1}(\Delta) \longrightarrow \cdots \longrightarrow C_1(\Delta) \longrightarrow C_0(\Delta) \longrightarrow 0 \qquad \mbox{and} \\
\tilde{C}_\bullet(\Delta;\field)&: \; 0 \longrightarrow C_{d-1}(\Delta) \longrightarrow \cdots \longrightarrow C_1(\Delta) \longrightarrow C_0(\Delta) \longrightarrow C_{-1}(\Delta) \longrightarrow 0
\end{align*}
be the simplicial chain complex and the reduced simplicial chain complex of $\Delta$ with coefficients in $\field$. Here $C_k(\Delta)$ is the vector space over $\field$ with basis $\{e_G: G \in \Delta,\ |G|=k+1\}$.
In particular, $C_{-1}(\Delta)$ is $1$-dimensional if $\Delta\neq \emptyset$ and it is $0$-dimensional if $\Delta=\emptyset$.
For a relative simplicial complex $\Psi=(\Delta,\Gamma)$ and $k\geq -1$, we define
$C_k(\Psi;\field)=C_k(\Delta;\field)/C_k(\Gamma)$; as above, it gives rise to simplicial and reduced simplicial chain complexes $C_\bullet(\Psi;\field)$ and $\tilde C_\bullet(\Psi;\field)$. We denote by ${H}_i(-; \field)=H_i(C_\bullet(-;\field))$ and $\tilde{H}_i(-; \field)=H_i(\tilde C_\bullet(-;\field))$, respectively, the $i$-th homology and the $i$-th reduced homology computed with coefficients in $\field$, and by $b_i(-;\field)$ and $\tilde{b}_i(-;\field)$, respectively, the dimensions of ${H}_i(-; \field)$ and $\tilde{H}_i(-; \field)$ over $\field$. When $\field$ is fixed, we often omit it from our notation. Note that for all $i>0$, $\tilde{b}_i(\Delta)=b_i(\Delta)$ and $\tilde{b}_i(\Delta,\Gamma)={b}_i(\Delta,\Gamma)$; on the other hand, $\tilde{b}_{-1}(\{\emptyset\})=1$ while $\tilde{b}_{-1}(\Delta)=0$ if $\Delta\neq\{\emptyset\}$; similarly, $\tilde{b}_0(\Delta,\emptyset)=\tilde{b}_0(\Delta)=b_0(\Delta)-1=b_0(\Delta,\emptyset)-1$ if $\dim\Delta\geq 0$, while $\tilde{b}_0(\Delta,\Gamma)={b}_0(\Delta,\Gamma)$ if $\Gamma\neq \emptyset$.

The complexes we study in this paper are homology manifolds and normal pseudomanifolds with or without boundary. Below we denote by $\mathbb{S}^j$ and $\mathbb{B}^j$ the $j$-dimensional sphere and ball, respectively. A $(d-1)$-dimensional simplicial complex $\Delta$ is an \textbf{$\field$-homology sphere} if $\tilde{H}_*(\lk_{\Delta}(G);\field) \cong \tilde{H}_*(\mathbb{S}^{d-|G|-1};\field)$ for all faces $G \in \Delta$ (including $G = \emptyset$). Similarly, $\Delta$ is an \textbf{$\field$-homology manifold} (without boundary) if for all nonempty faces $G\in\Delta$, the link of $G$ is an $\field$-homology $(d-|G|-1)$-sphere; in this case we write $\partial\Delta=\emptyset$. Homology manifolds without boundary are sometimes referred to as \emph{closed homology manifolds}.

Homology manifolds with boundary are defined in an analogous way: a $d$-dimensional simplicial complex $\Delta$ is an \textbf{$\field$-homology manifold with boundary} if (1) the link of each nonempty face $G$ of $\Delta$ has the homology of either $\mathbb{S}^{d-|G|}$ or $\mathbb{B}^{d-|G|}$, and (2) the set of all \textbf{boundary faces}, that is,
\[
\partial\Delta:= \left\{G\in\Delta \ : \ \tilde{H}_*(\lk_{\Delta}(G);\field) \cong \tilde{H}_*(\mathbb{B}^{d-|G|};\field)\right\} \cup \{\emptyset\},
\]
is a $(d-1)$-dimensional $\field$-homology manifold without boundary. 
We also mention that $\Delta$ is an $\field$-homology $d$-ball if (1) $\Delta$ is a homology manifold with boundary, (2) $\tilde H_* (\Delta;\field) \cong \tilde H_*(\mathbb B^d;\field)$, and (3) the boundary of $\Delta$ is a homology sphere. For instance, the link of any boundary vertex of a homology manifold with boundary is a homology ball.

A connected $d$-dimensional $\field$-homology manifold (with or without boundary) is called \textbf{orientable} if $\tilde{H}_d(\Delta,\partial\Delta;\field)\cong \field$. A disconnected $\field$-homology manifold is orientable if each of its connected components is.

A $d$-dimensional simplicial complex $\Delta$ is a \textbf{normal pseudomanifold} (with or without boundary) if (1) it is pure, (2) each $(d-1)$-face (or \textbf{ridge}) of $\Delta$ is contained in at most two facets of $\Delta$, and (3) the link of each nonempty face of dimension at most $d-2$ is connected. Such a complex $\Delta$ is called a \textbf{normal pseudomanifold without boundary} if each ridge of $\Delta$ is contained in exactly two facets of $\Delta$, and it is called a \textbf{normal pseudomanifold with boundary} if there is a ridge contained in only one facet of $\Delta$. If $\Delta$ is a normal pseudomanifold with boundary, then the boundary of $\Delta$, $\partial\Delta$, is defined to be the pure $(d-1)$-dimensional complex whose facets are precisely the ridges of $\Delta$ that are contained in unique facets of $\Delta$. We note that every homology manifold (with or without boundary) is a normal pseudomanifold (with or without boundary). 

Let $\Psi=(\Delta, \Gamma)$ be a $(d-1)$-dimensional relative simplicial complex. The main object of our study is the \textbf{$f$-vector} of $\Psi$, $f(\Psi):=(f_{-1}(\Psi), f_0(\Psi),\ldots, f_{d-1}(\Psi))$, where $f_i=f_i(\Psi)$ denotes the number of $i$-faces of $\Psi$; the numbers $f_i$ are called the \textbf{$f$-numbers} of $\Psi$. Observe that if $\Gamma=\emptyset$ and $\Delta\neq\emptyset$, then $f_{-1}(\Psi)=1$, while if $\Gamma\neq\emptyset$, then $f_{-1}(\Psi)=0$. Also, if $\Delta$ is a normal pseudomanifold with boundary and $\Gamma=\partial\Delta$, then $f_i(\Psi)$ counts the number of \textbf{interior} $i$-dimensional faces of $\Delta$.

For several algebraic reasons, it is often more natural to study a certain (invertible) integer transformation of the $f$-vector called the \textbf{$h$-vector} of $\Psi$, $h(\Psi)=(h_0(\Psi),h_1(\Psi),\ldots,h_d(\Psi))$: its components, the \textbf{$h$-numbers}, are defined by
\begin{equation} \label{h-numbers}
h_j(\Psi):= \sum_{i=0}^j (-1)^{j-i} \binom{d-i}{d-j} f_{i-1}(\Psi) \quad \mbox{for all} \quad 0 \leq j \leq d=\dim\Psi+1.
\end{equation}
We also define the \textbf{$g$-numbers} of $\Psi$ by $g_0(\Psi):=h_0(\Psi)$ and $g_j(\Psi):=h_j(\Psi)-h_{j-1}(\Psi)$ for $j>0$.

Assume that $\Delta$ is a $(d-1)$-dimensional simplicial complex with $|V(\Delta)|=n$ and $\field$ is an \emph{infinite} field of an arbitrary characteristic. We identify $V(\Delta)$ with $[n] = \{1,2,\ldots,n\}$. Consider a polynomial ring $S:=\field[x_1,x_2,\ldots,x_n]$ with one variable for each element of $V(\Delta)$. The \textbf{Stanley-Reisner ideal} of $\Delta$ is the ideal $$I_{\Delta}:= \left(x_{i_1}x_{i_2}\cdots x_{i_k} \ : \ \{i_1,i_2,\ldots,i_k\} \notin \Delta \right) \subseteq S.$$ The \textbf{Stanley-Reisner ring} (or face ring) of $\Delta$ is the quotient $\field[\Delta]:= S/I_{\Delta}$. Similarly, for a relative simplicial complex $\Psi=(\Delta,\Gamma)$, the \textbf{Stanley-Reisner module} of $\Psi$ is $$\field[\Delta, \Gamma]:=I_{\Gamma}/I_{\Delta}.$$ 

If $\Delta$ is $(d-1)$-dimensional, then the Krull dimension of $\field[\Delta]$ is $d$. A sequence of linear forms, $\theta_1, \ldots, \theta_d \in S$ is called a \textbf{linear system of parameters} (or l.s.o.p.)~of $\field[\Delta]$ if the ring $\field[\Delta]/\Theta\field[\Delta]$ is a finite-dimensional $\field$-vector space; here $\Theta=(\theta_1, \ldots, \theta_d)$. For instance, a sequence of $d$ \emph{generic} linear forms provides an l.s.o.p.

Since $I_{\Delta}$ and $I_{\Gamma}$ are monomial ideals, the quotient ring $\field[\Delta]$ and the quotient module $\field[\Delta, \Gamma]$ are graded by degree. The $i$-th graded piece of a graded ring (or module) $R$ is denoted by $R_i$. The following result is due to Schenzel \cite{Schenzel-81}:

\begin{theorem} 
\label{thm:Schenzel1}
Let $\Delta$ be an $\field$-homology manifold (with or without boundary) of dimension $d-1$, and let $\theta_1, \ldots, \theta_d$ be an l.s.o.p.~of $\field[\Delta]$. Then
\[
\dim_{\field} \left(\field[\Delta]/\Theta\field[\Delta]\right)_j = h_j(\Delta) + \binom{d}{j} \sum_{i=1}^{j-1} (-1)^{j-i-1}\cdot \tilde{b}_{i-1}(\Delta; \field) \quad \mbox{for all} \quad 0\leq j\leq d.
\]
\end{theorem}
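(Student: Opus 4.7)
The plan is to combine three classical ingredients: the Buchsbaum property of $\field[\Delta]$, Hochster's formula for the local cohomology modules $H^i_{\mathfrak m}(\field[\Delta])$, and Schenzel's Hilbert-series formula for a Buchsbaum ring modulo a linear system of parameters. Each specializes cleanly to homology manifolds with or without boundary, and the three together yield the claimed identity.

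First I would check that $\field[\Delta]$ is Buchsbaum. By Schenzel's topological criterion, this reduces to verifying that $\tilde{H}_i(\lk_\Delta F;\field)=0$ for every nonempty face $F\in\Delta$ and every $i<\dim\lk_\Delta F$. Since each such link is an $\field$-homology sphere or ball of dimension $d-|F|-1$, its reduced homology is trivial below the top dimension, so the criterion holds. Next, Hochster's formula gives
\[
\dim_\field H^i_{\mathfrak m}(\field[\Delta])_{-\mathbf{a}} = \dim_\field \tilde{H}_{i-|F|-1}(\lk_\Delta F;\field)
\]
for every $\mathbf{a}\in\Z_{\le 0}^n$ with support $F$, and zero otherwise. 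For $F\neq\emptyset$, every nonzero contribution lies in cohomological degree $i=d$ (the sphere case is top-dimensional, the ball case contributes nothing), so for $i<d$ the module $H^i_{\mathfrak m}(\field[\Delta])$ is concentrated in degree $0$ with
\[
\dim_\field H^i_{\mathfrak m}(\field[\Delta])_0 = \tilde{b}_{i-1}(\Delta;\field)\ (1\le i\le d-1),\qquad H^0_{\mathfrak m}(\field[\Delta])_0=0.
\]

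Finally I would invoke Schenzel's Hilbert-series identity: for any Buchsbaum ring $A$ of Krull dimension $d$ with l.s.o.p.\ $\Theta$,
\[
\dim_\field (A/\Theta A)_j = h_j(\Delta) + \binom{d}{j}\sum_{i=0}^{j-1}(-1)^{j-i-1}\dim_\field H^i_{\mathfrak m}(A)_0.
\]
Substituting the computation above (and noting that the $i=0$ term vanishes) gives exactly the formula in the theorem. The principal obstacle is Schenzel's identity itself, which I would establish by induction on the number of l.s.o.p.\ elements killed: setting $A^{(k)}:=\field[\Delta]/(\theta_1,\dots,\theta_k)$, the four-term exact sequence
\[
0\to (0:_{A^{(k)}}\theta_{k+1})(-1)\to A^{(k)}(-1)\xrightarrow{\theta_{k+1}} A^{(k)}\to A^{(k+1)}\to 0
\]
reduces the problem to controlling the annihilator $(0:_{A^{(k)}}\theta_{k+1})$. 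The Buchsbaum-defining property $\mathfrak m\cdot H^i_{\mathfrak m}(A)=0$ identifies this annihilator, up to a well-controlled error, with pieces of $H^\bullet_{\mathfrak m}(A^{(k)})$, which in turn are related back to $H^\bullet_{\mathfrak m}(A)$ via the long exact sequences associated to the previous parameter kills. Bookkeeping the resulting Hilbert-series corrections at each stage produces the prescribed additive shift by $\binom{d}{j}\sum_i(-1)^{j-i-1}\dim H^i_{\mathfrak m}(A)_0$, completing the induction.
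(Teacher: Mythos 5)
The paper does not actually prove this statement: Theorem \ref{thm:Schenzel1} is imported verbatim from Schenzel \cite{Schenzel-81}, so there is no in-paper argument to compare against. Your outline is essentially Schenzel's original (and the standard) proof, and it is sound: homology manifolds with or without boundary are pure and their face rings are Buchsbaum by the link criterion, Hochster's formula shows $H^i_{\mathfrak m}(\field[\Delta])$ for $i<d$ is concentrated in degree $0$ with dimension $\tilde b_{i-1}(\Delta;\field)$, and the graded Schenzel identity then gives the claim. Two small points of precision. First, the identity you attribute to ``any Buchsbaum ring'' is not correct in that generality: for a general graded Buchsbaum algebra the correction term involves the full Hilbert functions of the modules $H^i_{\mathfrak m}(A)$, not just their degree-$0$ pieces; it collapses to the displayed form precisely because of the degree-$0$ concentration you established, so you should state it that way (and the purity of $\Delta$ should be mentioned as part of the Buchsbaum criterion). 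Second, in the inductive step there is no ``well-controlled error'': by the weak-sequence property of Buchsbaum modules, $(0:_{A^{(k)}}\theta_{k+1})=(0:_{A^{(k)}}\mathfrak m)=H^0_{\mathfrak m}(A^{(k)})$ exactly, and the standard isomorphism $H^0_{\mathfrak m}(A^{(k)})\cong\bigoplus_{i=0}^{k}\binom{k}{i}H^i_{\mathfrak m}(A)(-i)$ (obtained by iterating $H^i_{\mathfrak m}(N/\theta N)\cong H^i_{\mathfrak m}(N)\oplus H^{i+1}_{\mathfrak m}(N)(-1)$ for $i<\dim N-1$) is the lemma that makes the Hilbert-series bookkeeping produce the coefficient $\binom{d}{j}$ and the alternating sum; citing or proving that lemma would close the only real gap in your sketch.
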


In light of this result, we define the $h'$-numbers of a $(d-1)$-dimensional relative simplicial complex $\Psi=(\Delta, \Gamma)$ as 
\begin{equation} \label{h'}
h'_j(\Psi) := h_j(\Psi) + \binom{d}{ j} \sum_{i=1}^{j-1} (-1)^{j-i-1}\cdot \tilde{b}_{i-1}(\Psi; \field) \quad \mbox{ for }0\leq j\leq d.
\end{equation}
(Note that $h'_d(\Psi)=\tilde{b}_{d-1}(\Psi;\field)$.) Also, in view of results from \cite{Novik-Swartz-09:Socles}, we define the \textbf{$h''$-numbers} of $\Psi$ as
\begin{equation} \label{h''}
h''_j(\Psi) := 
\begin{cases}
h'_j(\Psi) - \binom{d}{j} \tilde{b}_{j-1}(\Psi;\field) & \text{ if } 0 \leq j < d, \\
h'_d(\Psi) & \text{ if } j = d.
\end{cases}
\end{equation}

Let $\Delta$ be a $(d-1)$-dimensional $\field$-homology ball or an $\field$-homology sphere. We say that $\Delta$ has the \textbf{weak Lefschetz property} (over $\field$), abbreviated the WLP, if for $d+1$ generic linear forms $\theta_1, \ldots, \theta_d, \omega \in S$ the multiplication map $$\times\omega: \left(\field[\Delta]/\Theta\field[\Delta]\right)_{\lfloor d/2\rfloor} \to \left(\field[\Delta]/\Theta\field[\Delta]\right)_{\lfloor d/2\rfloor +1}$$ is onto. (Equivalently, $\times\omega: \left(\field[\Delta]/\Theta\field[\Delta]\right)_{j} \to \left(\field[\Delta]/\Theta\field[\Delta]\right)_{j+1}$ is onto for all $j\geq \lfloor d/2\rfloor$.) The boundary complex of any simplicial $d$-polytope has the WLP over $\Q$ \cite{Stanley-80}; furthermore, any $(d-1)$-dimensional ball that is contained in the boundary of a simplicial $d$-polytope has the WLP over $\Q$ \cite{Stanley-93}.

A $d$-dimensional $\FF$-homology manifold with boundary is said to be \textbf{$r$-stacked} if it has no interior faces of dimension $\leq d-r-1$; a $(d-1)$-dimensional $\FF$-homology manifold without boundary ($\FF$-homology sphere, respectively) is \textbf{$r$-stacked} if it is the boundary of an $r$-stacked homology $d$-manifold ($\FF$-homology $d$-ball, respectively).
The $1$-stacked $\FF$-homology manifolds are usually simply called \textbf{stacked homology manifolds}. It is well-known and easy to see that an $\FF$-homology sphere is stacked if and only if it is the connected sum of the boundary complexes of several simplices. In particular, stacked homology spheres are combinatorial spheres.

For homology spheres with the WLP, the following characterization of $r$-stackedness was established in \cite{Murai-Nevo-13}.

\begin{theorem}\label{GLBTsphere}
Let $\Delta$ be a $(d-1)$-dimensional $\FF$-homology sphere with the WLP.
Then for $1 \leq r \leq d/2$, $\Delta$ is $(r-1)$-stacked if and only if $g_{r}(\Delta)=0$.
\end{theorem}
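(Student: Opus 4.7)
The forward direction is a direct combinatorial computation. Suppose $\Delta=\partial B$ for some homology $d$-ball $B$ with no interior faces of dimension at most $d-r$, i.e.\ $f_i(B,\partial B)=0$ for all $i\leq d-r$. Applying the formula \eqref{h-numbers} to the pair $(B,\partial B)$ (whose effective dimension parameter is $d+1$) shows that $h_j(B,\partial B)=0$ for $j\leq d-r+1$. Since $B$ is acyclic and $\tilde b_i(B,\partial B;\field)=0$ for $i<d$, the definitions \eqref{h'} and \eqref{h''} collapse to $h''=h$ in the relevant range, so Proposition \ref{Dehn-Somm} applied to $B$ yields $h_k(B)=0$ for all $r\leq k\leq d$. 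A short Pascal-identity manipulation starting from $f_i(B)=f_i(\partial B)+f_i(B,\partial B)$ produces the identity $g_j(\partial B)=h_j(B)-h_j(B,\partial B)$; substituting $j=r$, which lies in the vanishing range of both terms since $r\leq d/2\leq d-r+1$, immediately yields $g_r(\Delta)=0$.

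For the converse, let $A:=\field[\Delta]/\Theta\field[\Delta]$, a Gorenstein $\field$-algebra of socle degree $d$ with Hilbert function $(h_0(\Delta),\ldots,h_d(\Delta))$, and let $\omega$ be a further generic linear form. Gorenstein (Poincar\'e) duality combined with the WLP forces $\omega:A_{j-1}\to A_j$ to be injective for all $j\leq\lceil d/2\rceil$, and the hypothesis $g_r(\Delta)=0$ gives $\dim A_{r-1}=\dim A_r$, so $\omega:A_{r-1}\to A_r$ is an isomorphism. Because $A/\omega A$ is standard graded, generated in degree one, and has vanishing degree-$r$ piece, $(A/\omega A)_s=0$ for every $s\geq r$; together with the palindromic symmetry of $h(\Delta)$, this forces $g_s(\Delta)=0$ for all $r\leq s\leq d-r+1$, so $\Delta$ has the $h$-vector expected of an $(r-1)$-stacked sphere.

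To produce the filling ball, the plan is to consider the candidate complex
\[
B:=\bigl\{F\subseteq V(\Delta)\ :\ \text{every subset of $F$ of cardinality $\leq d-r+1$ is a face of $\Delta$}\bigr\}.
\]
By construction the $(d-r)$-skeleton of $B$ coincides with that of $\Delta$, so once $B$ is shown to be a homology $d$-ball with $\partial B=\Delta$, it is automatically $(r-1)$-stacked. This verification is the main obstacle and the technical crux of the argument: one must establish that $B$ is pure of dimension $d$, that its maximal faces glue into a homology manifold with boundary, and that $(B,\partial B)$ has the correct relative homology of $(\mathbb{B}^d,\mathbb{S}^{d-1})$. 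The strategy is to leverage the algebraic information above---which gives strong control on the minimal non-faces of $\Delta$ in degrees $\leq d-r+1$---to compute the Stanley--Reisner structure of $B$ directly, aiming to show that $\field[B]$ is Cohen--Macaulay with Hilbert function matching that predicted by Dehn--Sommerville for a $d$-ball and that the combinatorial boundary of $B$ equals $\Delta$. Making this algebraic--combinatorial translation rigorous is where the bulk of the technical work lies.
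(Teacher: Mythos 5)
The statement you are proving is not proved in this paper at all: it is imported verbatim from \cite{Murai-Nevo-13} (the generalized lower bound theorem for homology spheres with the WLP), so the benchmark is that paper's proof, which is a substantial piece of work.

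Your forward implication ($(r-1)$-stacked $\Rightarrow g_r(\Delta)=0$) is correct and complete: with $f_{i}(B,\partial B)=0$ for $i\leq d-r$ you get $h_j(B,\partial B)=0$ for $j\leq d-r+1$, the ball case of Proposition \ref{Dehn-Somm} gives $h_k(B)=0$ for $k\geq r$, and Lemma \ref{h-and-g} gives $g_r(\partial B)=h_r(B)-h_r(B,\partial B)=0$ since $r\leq d-r+1$. (Note this direction never uses the WLP, which is consistent with the known statement.) The converse, however, has a genuine gap, and it is exactly the hard half of the theorem. What you actually establish is only the Hilbert-function consequence: WLP plus $g_r(\Delta)=0$ forces $h_{r-1}(\Delta)=h_r(\Delta)=\cdots=h_{d-r+1}(\Delta)$. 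You then write down the canonical candidate $B=\{F\subseteq V(\Delta):$ all subsets of $F$ of size $\leq d-r+1$ lie in $\Delta\}$ and explicitly defer the verification that $B$ is a homology $d$-ball with $\partial B=\Delta$, calling it "the technical crux." That deferred step \emph{is} the theorem. It cannot be recovered from the $h$-vector data you have: the equalities $h_{r-1}=\cdots=h_{d-r+1}$ say nothing about which vertex subsets have their low-dimensional skeleta inside $\Delta$, so they do not show that $B$ is pure of dimension $d$, Cohen--Macaulay, a homology manifold with boundary, or that its boundary is $\Delta$; even the first nontrivial case $r=2$ (equality in the classical Lower Bound Theorem implies stackedness) requires a genuine rigidity/structure argument, not a Hilbert-function computation. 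The actual proof in \cite{Murai-Nevo-13} needs much finer algebraic input extracted from the WLP --- control of the face ring (via graded Betti numbers/generic initial ideals, in the spirit of Hochster's formula, Theorem \ref{5.1}) in all degrees $\leq r$, not just its Hilbert function --- together with nontrivial combinatorial results about $(r-1)$-stacked triangulations identifying $B$ as the unique possible filling. As written, your proposal proves the easy direction and reduces the hard direction to an unproved plan, so it does not constitute a proof of the theorem.
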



\section{A warm up: orientable homology manifolds with boundary}
In this section we prove Proposition \ref{Dehn-Somm} and establish the inequality parts of Theorems \ref{main1} and \ref{main2} for the case of orientable homology manifolds. First we require the following easy observation.

\begin{lemma} \label{h-and-g}
Let $\Delta$ be a normal pseudomanifold with nonempty boundary $\partial \Delta$.
Then $$h_i(\Delta)=h_i(\Delta, \partial \Delta)+g_i(\partial \Delta) \quad \mbox{for all } i\geq 0.$$
\end{lemma}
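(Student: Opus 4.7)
The plan is to establish the identity by a short generating-function calculation. Assume $\Delta$ is $(d-1)$-dimensional, so $\partial\Delta$ has dimension $d-2$. For a $(d'-1)$-dimensional (possibly relative) complex $\Psi$, introduce the $h$-polynomial $h_\Psi(t) := \sum_{j=0}^{d'} h_j(\Psi) t^j$. Reversing the order of summation and substituting $k=j-i$ in \eqref{h-numbers} yields the standard closed form
\[
h_\Psi(t) = \sum_{i=0}^{d'} f_{i-1}(\Psi)\, t^i (1-t)^{d'-i},
\]
which I would take as the starting point for both $\Delta$ (with $d'=d$) and $\partial\Delta$ (with $d'=d-1$).

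Since every face of $\Delta$ lies in either $\partial\Delta$ or in the interior, one has $f_{i-1}(\Delta) = f_{i-1}(\partial\Delta) + f_{i-1}(\Delta,\partial\Delta)$ for $0\le i\le d$. The endpoint cases need a small sanity check: at $i=d$ the identity uses $f_{d-1}(\partial\Delta)=0$, and at $i=0$ it uses $f_{-1}(\Delta)=f_{-1}(\partial\Delta)=1$ and $f_{-1}(\Delta,\partial\Delta)=0$ (this last one because $\partial\Delta$ is nonempty). Plugging this decomposition into the generating function for $\Delta$, the boundary contribution to $h_\Delta(t)$ is
\[
\sum_{i=0}^{d-1} f_{i-1}(\partial\Delta)\, t^i (1-t)^{d-i}
\]
(the sum truncates at $i=d-1$ thanks to $f_{d-1}(\partial\Delta)=0$), which factors as $(1-t)\, h_{\partial\Delta}(t)$ by the same closed form applied to $\partial\Delta$.

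Therefore $h_\Delta(t) = h_{(\Delta,\partial\Delta)}(t) + (1-t)\, h_{\partial\Delta}(t)$, and comparing the coefficient of $t^i$ on both sides yields
\[
h_i(\Delta) = h_i(\Delta,\partial\Delta) + h_i(\partial\Delta) - h_{i-1}(\partial\Delta) = h_i(\Delta,\partial\Delta) + g_i(\partial\Delta),
\]
as claimed. The whole argument is a purely elementary manipulation of $h$-polynomials, so I do not foresee any real obstacle; the only thing demanding attention is the endpoint bookkeeping for $f_{-1}$ already flagged above (and the observation that $h_d(\partial\Delta)=0$ makes the identity hold trivially when $i=d$).
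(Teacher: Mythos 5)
Your proof is correct and follows essentially the same route as the paper: the key input in both is the decomposition $f_{i-1}(\Delta)=f_{i-1}(\Delta,\partial\Delta)+f_{i-1}(\partial\Delta)$ together with the dimension shift $\dim\partial\Delta=\dim\Delta-1$, after which the identity is a routine consequence of the definition \eqref{h-numbers}. Your $h$-polynomial factorization $h_\Delta(t)=h_{(\Delta,\partial\Delta)}(t)+(1-t)h_{\partial\Delta}(t)$ is simply a clean way of carrying out the computation the paper leaves implicit, with the endpoint bookkeeping handled correctly.
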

\begin{proof} Note that $f_i(\Delta)=f_i(\Delta,\partial \Delta)+f_i(\partial \Delta)$ for all $i\geq -1$, and that $\dim\Delta=\dim(\Delta,\partial\Delta)=1+ \dim(\partial\Delta)$. The result now follows easily from the definition of the $h$- and $g$-numbers, see eq.~\eqref{h-numbers}.
\end{proof}

Recall also that the \textbf{reduced Euler characteristic} of a $(d-1)$-dimensional simplicial complex $\Delta$ (where $d\geq 1$) is $$\tilde{\chi}(\Delta):=\sum_{i=-1}^{d-1} (-1)^{i} f_{i}(\Delta) =\sum_{i=0}^{d-1} (-1)^{i}\cdot\tilde{b}_i(\Delta; \field).$$

\smallskip\noindent {\it Proof of Proposition \ref{Dehn-Somm}: \ } Let $\Delta$ be a $(d-1)$-dimensional $\field$-homology manifold with boundary. Then 
\begin{equation} \label{h=h}
h_{d-i}(\Delta) + \binom{d}{i}(-1)^{d-i}\tilde{\chi}(\Delta)=h_i(\Delta)-g_i(\partial\Delta) 
= h_i(\Delta,\partial\Delta) \quad \mbox{for all } 0\leq i\leq d.
\end{equation}
Here the first equality is a result of Gr\"abe \cite{Grabe-87} (see also \cite[Theorem 3.1]{Novik-Swartz-09:Socles}), and the second one follows from Lemma \ref{h-and-g}. If, in addition, $\Delta$ is an orientable $\field$-homology manifold, then by Poincar\'e-Lefschetz duality,
$ \tilde{b}_k(\Delta; \field)=\tilde{b}_{d-1-k}(\Delta,\partial\Delta; \field)$ for all $0<k<d$. 
Hence for $i<d$,
\begin{eqnarray} \nonumber
(-1)^{d-i}\tilde{\chi}(\Delta)&=&\sum_{k=0}^{d-1} (-1)^{d-i-k}\cdot \tilde{b}_k(\Delta; \field) \\
&=& \label{b-b}
\left(\sum_{k=0}^{d-1-i} (-1)^{d-i-k}\cdot \tilde{b}_k(\Delta; \field)\right) - 
\left(\sum_{k=0}^{i-1} (-1)^{i-k}\cdot \tilde{b}_k(\Delta, \partial\Delta; \field)\right).
\end{eqnarray}
Substituting \eqref{b-b} into \eqref{h=h} and using the definition of $h''$-numbers (see eq.~\eqref{h''}) yield that for $0<i<d$,
\begin{eqnarray*}
h''_{d-i}(\Delta)&=&h_{d-i}(\Delta)+\binom{d}{i}\left(\sum_{k=0}^{d-1-i} (-1)^{d-i-k}\cdot \tilde{b}_k(\Delta; \field)\right) \\
&=&h_i(\Delta,\partial\Delta)+\binom{d}{i}\left(\sum_{k=0}^{i-1} (-1)^{i-k}\cdot \tilde{b}_k(\Delta, \partial\Delta; \field)\right)=h''_i(\Delta,\partial\Delta).
\end{eqnarray*}
The first part of the statement follows. 

Finally, as $f_{-1}(\Delta)=1$ and $f_{-1}(\Delta, \partial\Delta)=0$, we obtain that $h''_0(\Delta)=1$ and $h''_0(\Delta, \partial\Delta)=0$. On the other hand, if $\Delta$ is also connected, then $h''_d(\Delta,\partial\Delta)=\tilde{b}_{d-1}(\Delta,\partial\Delta)=1$ and $h''_d(\Delta)=\tilde{b}_{d-1}(\Delta;\field)=\tilde{b}_0(\Delta,\partial\Delta; \field)=0$, and the second part of the statement follows as well. \endproof

We also need the following result that is well-known to the experts.

\begin{lemma} \label{h''>h'}
Let $\Delta$ be an orientable $\field$-homology manifold (with or without boundary) of dimension $d-1\geq 3$. Then $h''_{d-2}(\Delta)\geq h'_{d-1}(\Delta)$. Furthermore, if all vertex links of $\Delta$ have the WLP then $h''_{d-r}(\Delta)\geq h'_{d-r+1}(\Delta)$ for all $r\leq d/2$. 
\end{lemma}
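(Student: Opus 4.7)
The plan is to interpret $h'_j(\Delta)$ as the Hilbert function of the Artinian reduction $A:=\field[\Delta]/\Theta\field[\Delta]$ via Schenzel's theorem, identify a socle submodule $\mathcal{N}\subseteq A$ encoding the correction $h'_j - h''_j$, and then reduce the inequality to surjectivity of multiplication by a generic linear form in the appropriate top degrees.

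By Schenzel's Theorem~\ref{thm:Schenzel1}, for a generic l.s.o.p.\ $\Theta$ one has $\dim_\field A_j = h'_j(\Delta)$. Since $\Delta$ is orientable, the arguments of Novik--Swartz in \cite{Novik-Swartz-09:Socles} (closed case) and \cite{Novik-Swartz-09:DS} (bounded case), built from local cohomology and Poincar\'e--Lefschetz duality, produce a graded submodule $\mathcal{N}\subseteq A$ lying in the socle, with $\dim_\field \mathcal{N}_j = \binom{d}{j}\tilde{b}_{j-1}(\Delta;\field)$ for $1\le j\le d-1$. In particular, $\omega\cdot\mathcal{N}=0$ for every linear form $\omega$, and the Hilbert function of $A/\mathcal{N}$ equals $h''_j(\Delta)$.

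For a generic linear form $\omega$, the inclusion $\mathcal{N}_{d-r}\subseteq \ker(\times\omega|_{A_{d-r}})$ yields
\[
\dim_\field\bigl(\omega\cdot A_{d-r}\bigr)\;\le\;\dim A_{d-r} - \dim \mathcal{N}_{d-r}\;=\;h''_{d-r}(\Delta).
\]
Thus $h''_{d-r}(\Delta)\ge h'_{d-r+1}(\Delta) = \dim A_{d-r+1}$ would follow at once from surjectivity of $\times\omega : A_{d-r}\to A_{d-r+1}$ for generic $\omega$, since then $\dim A_{d-r+1} = \dim(\omega\cdot A_{d-r}) \le h''_{d-r}(\Delta)$.

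The main obstacle is precisely this surjectivity. For the first statement ($r=2$, $d\ge 4$, no WLP hypothesis), I would prove surjectivity of the penultimate map $\times\omega:A_{d-2}\to A_{d-1}$ by transferring Poincar\'e--Lefschetz duality to the quotient $A/\mathcal{N}$---Gorenstein in the closed case, and satisfying a relative duality pairing in the bounded case---so that a nontrivial cokernel in degree $d-1$ would force a nontrivial kernel of $\times\omega$ in degree $1$, which generic $\omega$ precludes. For the second statement (WLP on every vertex link, $r\le d/2$), surjectivity of $\times\omega: A_j\to A_{j+1}$ for all $j\ge\lfloor d/2\rfloor$ follows from the principle---essentially due to Swartz (compare \cite{Swartz-09,Novik-Swartz-09:DS})---that WLP on vertex links of a homology manifold lifts to generic surjective multiplication in the upper half of $A$, applied at $j=d-r$. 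Combining the surjectivity with the socle-submodule bound then yields $h'_{d-r+1}(\Delta)\le h''_{d-r}(\Delta)$, completing the proof.
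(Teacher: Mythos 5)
Your handling of the second claim (WLP links, $r\le d/2$) is essentially the paper's own argument: Schenzel's theorem identifies $\dim_\field A_j$ with $h'_j(\Delta)$, the Novik--Swartz socle submodule $\mathcal{N}$ forces $\ker(\times\omega)\supseteq\mathcal{N}_{d-r}$ of dimension $\binom{d}{r}\tilde b_{d-r-1}(\Delta;\field)$, and Swartz's lifting theorem \cite[Theorem 4.26]{Swartz-09} supplies surjectivity of $\times\omega\colon A_{d-r}\to A_{d-r+1}$; combining these is exactly what the paper does.

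The gap is in the first claim ($r=2$, no WLP hypothesis), which the paper disposes of by quoting \cite[Theorem 2.6]{Swartz-14} and which you propose to reprove via duality on $A/\mathcal{N}$. The duality you would transfer (the Novik--Swartz Gorenstein theorem, \cite{Novik-Swartz-09:Gorenstein}) lives on the quotient $\overline{A}:=A/\mathcal{N}$, so it dualizes $\times\omega\colon\overline{A}_{d-2}\to\overline{A}_{d-1}$ against $\times\omega\colon\overline{A}_{1}\to\overline{A}_{2}$. Hence a nontrivial cokernel in degree $d-1$ only forces a nontrivial kernel of the map into $\overline{A}_2=A_2/\mathcal{N}_2$, not into $A_2$: genericity (really Fogelsanger rigidity \cite{Fogelsanger-88}, not mere genericity) rules out $\omega a=0$ for $0\neq a\in A_1$, but it does not rule out $\omega a\in\mathcal{N}_2\setminus\{0\}$, and $\dim\mathcal{N}_2=\binom{d}{2}\tilde b_1(\Delta;\field)$ is typically nonzero. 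Moreover, even if you had surjectivity of the induced map on $\overline{A}$, the conclusion would be $h''_{d-2}(\Delta)\ge h''_{d-1}(\Delta)$, which falls short of the asserted $h''_{d-2}(\Delta)\ge h'_{d-1}(\Delta)$ by $\binom{d}{d-1}\tilde b_{d-2}(\Delta;\field)$: what is needed is surjectivity of $\times\omega\colon A_{d-2}\to A_{d-1}$ onto the full space $A_{d-1}$, including the socle piece $\mathcal{N}_{d-1}$, and a duality statement about the quotient $A/\mathcal{N}$ cannot detect whether that piece is hit. Finally, in the with-boundary case the ``relative duality pairing'' for $A/\mathcal{N}$ that you lean on is not an available result (the Gorenstein theorem is for closed connected orientable manifolds). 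So the reduction to the Swartz-type surjectivity statement is fine, but your proposed proof of that surjectivity for $r=2$ does not go through as sketched; this is precisely the point the paper outsources to \cite[Theorem 2.6]{Swartz-14}.
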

\begin{proof} For $r=2$ this is \cite[Theorem 2.6]{Swartz-14}. For other values of $r$, our assumptions on the links combined with \cite[Theorem 4.26]{Swartz-09} imply that for generic linear forms $\theta_1,\ldots, \theta_d,\omega$, the linear map $\times\omega: \left(\field[\Delta]/\Theta\field[\Delta]\right)_{d-r}\to \left(\field[\Delta]/\Theta\field[\Delta]\right)_{d-r+1}$ is surjective. Since by Schenzel's theorem (see Theorem \ref{thm:Schenzel1}), the dimensions of spaces involved are $h'_{d-r}(\Delta)$ and $h'_{d-r+1}(\Delta)$, respectively, and since the dimension of the kernel of this map is at least $\binom{d}{r}\tilde{b}_{d-r-1}(\Delta;\field)$ (see \cite[Corollary 3.6]{Novik-Swartz-09:Socles}), we conclude that $h''_{d-r}(\Delta)=h'_{d-r}(\Delta)-\binom{d}{r}\tilde{b}_{d-r-1}\geq h'_{d-r+1}(\Delta)$.
\end{proof}

We are now in a position to verify the following special case of Theorems \ref{main1} and \ref{main2}:

\begin{proposition} Let $\Delta$ be a $(d-1)$-dimensional orientable $\field$-homology manifold with nonempty boundary, where $d-1\geq 3$. Then $g_{2}(\Delta, \partial\Delta)\geq \binom{d+1}{2}\left(\tilde{b}_1(\Delta, \partial\Delta; \field)- \tilde{b}_0(\Delta, \partial\Delta; \field)\right)$. Furthermore, if all vertex links of $\Delta$ have the WLP then 
\[
g_r(\Delta, \partial\Delta)\geq \binom{d+1}{r}\sum_{j=1}^r (-1)^{r-j}\cdot\tilde{b}_{j-1}(\Delta, \partial\Delta; \field) \quad \mbox{for all } 1\leq r\leq d/2.
\]
\end{proposition}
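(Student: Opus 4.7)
My plan is to derive the inequality at the level of $h''$-numbers and then convert it to a bound on $g_r$. Throughout, set $\beta_i := \tilde{b}_{i-1}(\Delta,\partial\Delta;\field)$ for brevity, and recall from equations \eqref{h'}--\eqref{h''} that for any relative simplicial complex $\Psi$ of dimension $d-1$ and any $0 \leq j < d$,
\[
h_j(\Psi) \;=\; h''_j(\Psi) + \binom{d}{j}\sum_{i=1}^{j}(-1)^{j-i}\,\tilde{b}_{i-1}(\Psi;\field).
\]
I would handle the case $r=1$ separately: since $\partial\Delta\neq\emptyset$ we have $h_0(\Delta,\partial\Delta)=0$, so $g_1(\Delta,\partial\Delta) = f_0(\Delta,\partial\Delta)$ counts interior vertices. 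From the long exact sequence of the pair, $\tilde{b}_0(\Delta,\partial\Delta;\field)$ equals the number of connected components of $\Delta$ whose boundary is empty; each such closed component is a $(d-1)$-dimensional normal pseudomanifold, hence has at least $d+1$ vertices, all of which are interior. This gives $g_1 \geq (d+1)\tilde{b}_0(\Delta,\partial\Delta;\field)$, as required.

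For $2 \leq r \leq d/2$, the heart of the argument is to chain together three ingredients. Since $0 < r-1 < r < d$, Proposition \ref{Dehn-Somm} yields $h''_j(\Delta,\partial\Delta) = h''_{d-j}(\Delta)$ for $j\in\{r-1,r\}$. Lemma \ref{h''>h'} (in the form requiring no WLP when $r=2$, and under the WLP hypothesis for larger $r$) supplies
\[
h''_{d-r}(\Delta) \;\geq\; h'_{d-r+1}(\Delta) \;=\; h''_{d-r+1}(\Delta) + \binom{d}{r-1}\tilde{b}_{d-r}(\Delta;\field).
\]
Finally, orientability together with Poincar\'e--Lefschetz duality identifies $\tilde{b}_{d-r}(\Delta;\field) = \tilde{b}_{r-1}(\Delta,\partial\Delta;\field) = \beta_r$. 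Combining these three pieces produces the key intermediate inequality
\[
h''_r(\Delta,\partial\Delta) - h''_{r-1}(\Delta,\partial\Delta) \;\geq\; \binom{d}{r-1}\beta_r.
\]

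To close, I would expand $g_r(\Delta,\partial\Delta) = h_r(\Delta,\partial\Delta) - h_{r-1}(\Delta,\partial\Delta)$ using the conversion formula above, writing $A_j := \sum_{i=1}^{j}(-1)^{j-i}\beta_i$ so that $A_r + A_{r-1} = \beta_r$. The bound on $h''_r - h''_{r-1}$ lets me replace $\binom{d}{r-1}\beta_r - \binom{d}{r-1}A_{r-1}$ by $\binom{d}{r-1}A_r$, which combines with the $\binom{d}{r}A_r$ term coming from $h_r$; Pascal's identity $\binom{d}{r}+\binom{d}{r-1}=\binom{d+1}{r}$ then assembles the desired $\binom{d+1}{r}A_r = \binom{d+1}{r}\sum_{j=1}^r(-1)^{r-j}\tilde{b}_{j-1}(\Delta,\partial\Delta;\field)$. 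The main obstacle I anticipate is not conceptual but careful bookkeeping: the Dehn--Sommerville index shift, the Poincar\'e--Lefschetz duality shift, and the $h\leftrightarrow h''$ conversion must align so that the binomial coefficients collapse to exactly $\binom{d+1}{r}$, and one must remember that Dehn--Sommerville at $i=0$ needs connectedness --- the reason $r=1$ must be treated by a direct vertex-count.
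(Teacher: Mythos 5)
Your proposal is correct and follows essentially the same route as the paper's proof: the $r=1$ case via counting interior vertices of closed components, and for $2\leq r\leq d/2$ the chain of Proposition \ref{Dehn-Somm}, Lemma \ref{h''>h'} (without WLP for $r=2$, with WLP otherwise), and Poincar\'e--Lefschetz duality, followed by the same $h''\!\leftrightarrow\! h$ conversion and Pascal's identity. The bookkeeping you flag does work out exactly as you describe, since $A_r+A_{r-1}=\beta_r$ makes the binomial coefficients collapse to $\binom{d+1}{r}$.
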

\begin{proof} All computations below are over $\field$, and so $\field$ is omitted from the notation.
The statement is easy when $r=1$. Indeed, since $f_{-1}(\Delta,\partial\Delta)=0$, we obtain that $g_1(\Delta,\partial\Delta)=h_1(\Delta,\partial\Delta)=f_0(\Delta,\partial\Delta)$. On the other hand, $\tilde{b}_0(\Delta,\partial\Delta)$ counts the number of connected components of $\Delta$ without boundary. The result follows since each such component has at least $d+1$ vertices.

For $r\geq 2$, we have
\begin{eqnarray*}
h''_r(\Delta,\partial\Delta)&=&h''_{d-r}(\Delta) \\
&\geq & h''_{d-r+1}(\Delta)+\binom{d}{r-1}\tilde{b}_{d-r}(\Delta)\\
&=&h''_{r-1}(\Delta,\partial\Delta)+\binom{d}{r-1}\tilde{b}_{r-1}(\Delta,\partial\Delta),
\end{eqnarray*}
where the first step is by Proposition \ref{Dehn-Somm}, the second step is by Lemma \ref{h''>h'} and eq.~\eqref{h''}, and the last step is by Proposition \ref{Dehn-Somm} and Poincar\'e-Lefschetz duality. Therefore,
\begin{eqnarray*}
0&\leq& h''_r(\Delta,\partial\Delta)-h''_{r-1}(\Delta,\partial\Delta)-\binom{d}{r-1}\tilde{b}_{r-1}(\Delta,\partial\Delta) \\
&\stackrel{\mbox{\small{by eq.~\eqref{h''}}}}{=}& \left(h_r(\Delta,\partial\Delta)-\binom{d}{r}\sum_{j=1}^r (-1)^{r-j}\cdot\tilde{b}_{j-1}(\Delta,\partial\Delta)\right) \\
&& -\left(h_{r-1}(\Delta,\partial\Delta)+\binom{d}{r-1}\sum_{j=1}^r (-1)^{r-j}\cdot\tilde{b}_{j-1}(\Delta,\partial\Delta)\right)\\
&=&g_r(\Delta,\partial\Delta)-\binom{d+1}{r}\sum_{j=1}^r (-1)^{r-j}\cdot\tilde{b}_{j-1}(\Delta,\partial\Delta),
\end{eqnarray*}
and the statement follows.
\end{proof}

Although we postpone the proofs of Theorems \ref{main1} and \ref{main2} in the non-orientable case until later sections, we note that Corollary \ref{h_2-vs-int-vert} is an easy consequence of Theorems \ref{LBT-closed} and \ref{main1}. Indeed, let $\Delta$ be a $d$-dimensional normal pseudomanifold whose boundary is also a normal pseudomanifold, where $d\geq 4$. Then by Lemma \ref{h-and-g}, 
$$h_2(\Delta)=h_1(\Delta,\partial\Delta)+g_2(\Delta,\partial\Delta)+g_2(\partial\Delta)=f_0(\Delta, \partial\Delta)+g_2(\Delta,\partial\Delta)+g_2(\partial\Delta).
$$
Replacing the last two summands with their lower bounds provided by Theorems \ref{LBT-closed} and \ref{main1} (and recalling that $\dim(\Delta,\partial\Delta)=d$ while $\dim(\partial\Delta)=d-1$) completes the proof for $d\geq 4$. 

For $d=3$, we can still use Theorem \ref{main1} to bound $g_2(\Delta,\partial\Delta)$. As for $g_2(\partial\Delta)$, note that each connected component of the boundary of $\partial\Delta$ is a closed surface, so the Dehn-Sommerville relations \cite{Klee-64} tell us that $g_2(\partial\Delta)=3\big(\tilde{b}_1(\partial\Delta;\ZZ/2\ZZ)-2\tilde{b}_0(\partial\Delta;\ZZ/2\ZZ)\big)$. The result follows since $\tilde{b}_1(\partial\Delta;\ZZ/2\ZZ)\geq \tilde{b}_1(\partial\Delta;\FF)$ and $\tilde{b}_0(\partial\Delta;\ZZ/2\ZZ)= \tilde{b}_0(\partial\Delta;\FF)$.\endproof

We close this section with a remark on sharpness of Theorems \ref{main1} and \ref{main2}.

\begin{remark} \label{sharpness}
The inequalities of Theorems \ref{main1} and \ref{main2} are sharp. Indeed, it follows from \cite[Theorem 3.1 and Proposition 5.2]{Murai-Nevo-14} that for $1\leq s\leq (d+1)/2$, any closed $(s-1)$-stacked $\FF$-homology $d$-manifold satisfies
\[
g_r(\Delta) = \binom{d+2}{r}\sum_{j=1}^r (-1)^{r-j}\cdot\tilde{b}_{j-1}(\Delta; \field) \quad \mbox{for all } s\leq r\leq (d+1)/2.
\]
A straightforward computation then shows that for $1\leq s\leq (d+1)/2$, any closed 
$(s-1)$-stacked $\FF$-homology $d$-manifold with one facet removed satisfies
\[
g_r(\Delta, \partial\Delta) = \binom{d+2}{r}\sum_{j=1}^r (-1)^{r-j}\cdot\tilde{b}_{j-1}(\Delta, \partial\Delta; \field) \quad \mbox{for all } s\leq r\leq (d+1)/2.
\]
The existence of a closed $r$-stacked $d$-manifold $\Delta_{r,d}$ that triangulates $\mathbb{S}^{r}\times \mathbb{S}^{d-r}$ for all pairs $0\leq r\leq d$ was established in \cite{Klee-Novik-12} (where this manifold was denoted by $\partial \mathcal{B}(r,d+2)$). Since being $(s'-1)$-stacked implies being $(s-1)$-stacked for all $s\geq s'$ and since connected sums of $(s-1)$-stacked manifolds are $(s-1)$-stacked for all $s\geq 2$, it follows that by considering connected sums of several copies of $\Delta_{0,d},\ldots, \Delta_{s-1,d}$ we can ensure the existence of a closed $(s-1)$-stacked $d$-manifold $\Delta$ with $\tilde{b}_0(\Delta)=a_0,\tilde{b}_1(\Delta)=a_1,\ldots,\tilde{b}_{s-1}(\Delta)=a_{s-1}$ for any $2\leq s\leq (d+1)/2$ and an arbitrary non-negative integer vector $(a_0,a_1,\ldots,a_{s-1})$.
\end{remark}

\section{The $\sigma$- and $\mu$-numbers of relative simplicial complexes}
In this section we develop the theory of $\sigma$- and $\mu$-numbers for relative simplicial complexes.
All results below are natural extensions of the results proved by Bagchi and Datta \cite{Bagchi:mu-vector, Bagchi-Datta-14}. Throughout this section we do all homology computations with coefficients in a fixed field $\field$ and we omit $\field$ from our notation. For a simplicial complex $\Delta$ and a subset $W \subseteq V(\Delta)$, we denote by $\Delta_W=\{F \in \Delta: F \subseteq W\}$ the subcomplex of $\Delta$ induced by $W$. We use the following 

\begin{notation}
\label{notation1}
Let $(\Delta,\Gamma)$ be a relative simplicial complex with $V(\Delta)=V$. Fix an order $v_1,\dots,v_n$ on the elements of $V$, where $n=|V|$, and denote by $V_{\leq k}$ the set $\{v_1,\dots,v_k\}$.
\end{notation}

Note that if $\lk_\Delta(v) \ne \emptyset$, then $\lk_\Delta(v)_{V_{\leq 0}}=\{ \emptyset\}$. To define the $\sigma$- and $\mu$-numbers we need a bit of preparation.
We start by establishing a relative analogue of the inequalities which are called Morse relations in polyhedral Morse theory developed by K\"uhnel \cite{Kuhnel-90,Kuhnel-95}. (The connection to the Morse inequalities is explained in \cite[Remark 2.11]{Bagchi-Datta-14}.)

\begin{lemma} \label{1.1}
Let $(\Delta,\Gamma)$ be a relative simplicial complex. Then for $i\geq 0$, the following holds:
\begin{itemize}
\item[(i)]
$\displaystyle b_i(\Delta,\Gamma) \leq \sum_{k=1}^n \tilde b_{i-1}\big(\lk_\Delta(v_k)_{V_{\leq k-1}},\lk_\Gamma(v_k)_{V_{\leq k-1}}\big)$, and
\item[(ii)]
$\displaystyle
\sum_{j=0}^i (-1)^{i-j} b_j(\Delta,\Gamma) \leq \sum_{j=0}^i (-1)^{i-j} \sum_{k=1}^n \tilde b_{j-1} \big(\lk_\Delta(v_k)_{V_{\leq k-1}},\lk_\Gamma(v_k)_{V_{\leq k-1}}\big)$.
\end{itemize}
\end{lemma}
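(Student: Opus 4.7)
The plan is to build $(\Delta,\Gamma)$ up one vertex at a time and chase a long exact sequence. Set
\[
\Delta_k := \Gamma \cup \Delta_{V_{\leq k}} \quad \text{for } 0 \leq k \leq n,
\]
so that $\Delta_0 = \Gamma$ and $\Delta_n = \Delta$. The key computation is that the cells of $\Delta_k$ not in $\Delta_{k-1}$ are exactly the faces of the form $\{v_k\}\cup G$ with $G \in \lk_\Delta(v_k)_{V_{\leq k-1}}$ and $\{v_k\}\cup G \notin \Gamma$, the latter being equivalent to $G \notin \lk_\Gamma(v_k)_{V_{\leq k-1}}$. The map $\{v_k\}\cup G \mapsto G$ should lift to a degree-shifting isomorphism of relative chain complexes
\[
C_\bullet(\Delta_k,\Delta_{k-1}) \;\cong\; \tilde{C}_{\bullet-1}\bigl(\lk_\Delta(v_k)_{V_{\leq k-1}},\,\lk_\Gamma(v_k)_{V_{\leq k-1}}\bigr),
\]
the reduced chain complex appearing because the $0$-cell $\{v_k\}$ (when $v_k \notin V(\Gamma)$) has no boundary in the relative chain complex $C_\bullet(\Delta_k,\Delta_{k-1})$, matching the action of $\partial$ on $C_0 \to C_{-1}$ in the reduced complex. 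Taking homology gives $\dim H_i(\Delta_k,\Delta_{k-1}) = \tilde b_{i-1}\bigl(\lk_\Delta(v_k)_{V_{\leq k-1}},\,\lk_\Gamma(v_k)_{V_{\leq k-1}}\bigr)$.

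With this in hand I would apply the long exact sequence of the triple $(\Delta_k,\Delta_{k-1},\Gamma)$,
\[
\cdots \to H_i(\Delta_{k-1},\Gamma) \to H_i(\Delta_k,\Gamma) \to H_i(\Delta_k,\Delta_{k-1}) \to H_{i-1}(\Delta_{k-1},\Gamma) \to \cdots,
\]
and induct on $k$. Part (i) follows immediately from subadditivity of dimensions in an exact sequence, with base case $H_i(\Delta_0,\Gamma) = H_i(\Gamma,\Gamma) = 0$. For part (ii) I would invoke the standard strong Morse inequality for any such long exact sequence, namely
\[
\sum_{j=0}^i (-1)^{i-j} b_j(\Delta_k,\Gamma) \;\leq\; \sum_{j=0}^i (-1)^{i-j} b_j(\Delta_{k-1},\Gamma) \;+\; \sum_{j=0}^i (-1)^{i-j}\dim H_j(\Delta_k,\Delta_{k-1}),
\]
which is an easy consequence of the identity $\dim A_j - \dim B_j + \dim C_j = h_j + h_{j+1}$ (where $h_j$ is the rank of the connecting map out of degree $j$) together with the boundary fact $h_0 = 0$ in unreduced homology. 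Summing the step inequality over $k=1,\dots,n$ and substituting the formula from the first paragraph yields both (i) and (ii).

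The arguments are largely formal once the chain-level identification in the first paragraph is pinned down, so I expect that to be the main bookkeeping obstacle. The two conventions to be careful with are: (a) when $v_k \in V(\Gamma)$ one has $\emptyset \in \lk_\Gamma(v_k)_{V_{\leq k-1}}$, making $\tilde b_{-1}$ vanish as it should, since no new cell is attached; (b) when $v_k \notin V(\Gamma)$ one has $\lk_\Gamma(v_k)_{V_{\leq k-1}} = \emptyset$ and $\tilde b_{-1}=1$, correctly accounting for the single new $0$-cell $\{v_k\}$ contributed at step $k$. Once these edge cases are checked, the induction produces both inequalities as a single uniform argument.
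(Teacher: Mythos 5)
Your proof is correct and follows essentially the same route as the paper's: a vertex-by-vertex filtration, the long exact sequence of a triple, the chain-level identification of the cells added at step $k$ with the shifted reduced relative chain complex of $\big(\lk_\Delta(v_k)_{V_{\leq k-1}},\lk_\Gamma(v_k)_{V_{\leq k-1}}\big)$, and then the standard subadditivity/strong Morse bookkeeping. The only difference is that you filter by $\Delta_k=\Gamma\cup\Delta_{V_{\leq k}}$ against the fixed subcomplex $\Gamma$, whereas the paper uses the induced pairs $(\Delta_{V_{\leq k}},\Gamma_{V_{\leq k}})$ together with an excision step --- a cosmetic variation (and in your edge case (b) it is $\dim\tilde C_{-1}$ of the link pair, not $\tilde b_{-1}$, that equals $1$; the chain-level statement is what the argument actually uses).
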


\begin{proof} The triple $$\Gamma_{V_{\leq k}} \subseteq \Gamma_{V_{\leq k}} \cup \Delta_{V_{\leq k-1}} \subseteq \Delta_{V_{\leq k}}$$ gives rise to the following long exact sequence in homology 
\begin{align*}
&
\longrightarrow H_i(\Gamma_{V_{\leq k}} \cup \Delta_{V_{\leq k-1}},\Gamma_{V_{\leq k}})
\longrightarrow H_i(\Delta_{V_{\leq k}},\Gamma_{V_{\leq k}})
\longrightarrow H_i(\Delta_{V_{\leq k}},\Gamma_{V_{\leq k}} \cup \Delta_{V_{\leq k-1}})\\
&\longrightarrow \quad \cdots \\
&
\longrightarrow H_0(\Gamma_{V_{\leq k}} \cup \Delta_{V_{\leq k-1}},\Gamma_{V_{\leq k}})
\longrightarrow H_0(\Delta_{V_{\leq k}},\Gamma_{V_{\leq k}})
\longrightarrow H_0(\Delta_{V_{\leq k}},\Gamma_{V_{\leq k}} \cup \Delta_{V_{\leq k-1}}) \longrightarrow 0.
\end{align*}
Note that $\Gamma_{V_{\leq k}}=\st_{\Gamma_{V_{\leq k}}}(v_k)\cup\Gamma_{V_{\leq k-1}}$ and $\Delta_{V_{\leq k}}=\st_{\Delta_{V_{\leq k}}}(v_k)\cup\Delta_{V_{\leq k-1}}$. Thus, we obtain by the excision theorem that 
\begin{equation} \label{*1}
H_i(\Gamma_{V_{\leq k}} \cup \Delta_{V_{\leq k-1}},\Gamma_{V_{\leq k}})
\cong H_i(\Delta_{V_{\leq k-1}},\Gamma_{V_{\leq k-1}}).
\end{equation}
Furthermore, for all $i\geq 0$, we have the following canonical isomorphisms of chain complexes that commute with the boundary operator:
\[
\frac{C_i\left(\Delta_{V_{\leq k}}\right)}{C_i\left(\Gamma_{V_{\leq k}}\cup \Delta_{V_{\leq k-1}}\right)}
\cong \frac{C_i\left(\st_{\Delta_{V_{\leq k}}}(v_k)\cup\Delta_{V_{\leq k-1}}\right)/C_i\left(\Delta_{V_{\leq k-1}}\right)}{C_i\left(\st_{\Gamma_{V_{\leq k}}}(v_k)\cup\Delta_{V_{\leq k-1}}\right)/C_i\left(\Delta_{V_{\leq k-1}}\right)}\cong \frac{C_{i-1}\left(\lk_{\Delta_{V_{\leq k}}}(v_k)\right)}{C_{i-1}\left(\lk_{\Gamma_{V_{\leq k}}}(v_k)\right)}.
\]
Therefore,
\begin{eqnarray}
\nonumber
H_i(\Delta_{V_{\leq k}},\Gamma_{V_{\leq k}} \cup \Delta_{V_{\leq k-1}})
&\cong&
\tilde{H}_{i-1}\big(\lk_{\Delta_{V_{\leq k}}}(v_k),\lk_{\Gamma_{V_{\leq k}}}(v_k)\big)\\
\label{*2}
&=& \tilde{H}_{i-1}\big(\lk_\Delta(v_k)_{V_{\leq k-1}},\lk_\Gamma(v_k)_{V_{\leq k-1}}\big).
\end{eqnarray}

The above long exact sequence together with \eqref{*1} and \eqref{*2} yields
\begin{align*}
b_i(\Delta_{V_{\leq k}},\Gamma_{V_{\leq k}})
& \leq b_i(\Gamma_{V_{\leq k}} \cup \Delta_{V_{\leq k-1}},\Gamma_{V_{\leq k}})
+ b_i(\Delta_{V_{\leq k}},\Gamma_{V_{\leq k}} \cup \Delta_{V_{\leq k-1}})\\
&= b_i(\Delta_{V_{\leq k-1}},\Gamma_{V_{\leq k-1}})
+ \tilde{b}_{i-1} \big(\lk_\Delta(v_k)_{V_{\leq k-1}}, \lk_\Gamma(v_k)_{V_{\leq k-1}}\big).
\end{align*}
Using this inequality inductively, we infer part (i) of the statement.

As for part (ii) of the statement, note that the long exact sequence also implies
\begin{eqnarray} 
\nonumber
&&\sum_{j=0}^i (-1)^{j-i} b_j(\Delta_{V_{\leq k}},\Gamma_{V_{\leq k}})
\\
\label{**3}
&& \leq \sum_{j=0}^i (-1)^{j-i}
b_j(\Gamma_{V_{\leq k}} \cup \Delta_{V_{\leq k-1}},\Gamma_{V_{\leq k}})
+ 
\sum_{j=0}^i (-1)^{j-i}
b_j(\Delta_{V_{\leq k}},\Gamma_{V_{\leq k}} \cup \Delta_{V_{\leq k-1}}),
\end{eqnarray}
and so the desired inequality follows similarly to that of part (i) from equations \eqref{*1} and \eqref{*2}.
\end{proof}

As a corollary of Lemma \ref{1.1}, we obtain the following result. 
\begin{proposition}
\label{1.2}
Let $(\Delta, \Gamma)$ be a relative simplicial complex with $|V(\Delta)|=n$.
Then
\begin{itemize}
\item[(i)] $\displaystyle b_i(\Delta,\Gamma) \leq \sum_{v \in V(\Delta)} \left\{ \frac{1}{n} \sum_{W \subseteq V(\Delta) \setminus\{v\}} \frac{1}{{n-1 \choose |W|}} \tilde{b}_{i-1}\big(\lk_\Delta(v)_W,\lk_\Gamma(v)_W\big)\right\}$, and
\item[(ii)]
$\displaystyle \sum_{j=0}^i (-1)^{i-j} b_j(\Delta,\Gamma) \leq \sum_{j=0}^i (-1)^{i-j}\sum_{v\in V(\Delta)} \left\{ \frac 1 n \sum_{W \subseteq V(\Delta) \setminus\{v\}} \frac 1 {{n-1 \choose |W|}} \tilde{b}_{j-1}\big(\lk_\Delta(v)_W,\lk_\Gamma(v)_W\big)\right\}$.
\end{itemize}
\end{proposition}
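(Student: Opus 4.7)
The plan is to derive Proposition \ref{1.2} from Lemma \ref{1.1} by a standard averaging-over-orderings argument, applied separately for part (i) and for the alternating sums in part (ii). The ordering $v_1,\dots,v_n$ fixed in Notation \ref{notation1} is arbitrary, so Lemma \ref{1.1} holds for \emph{every} permutation $\pi$ of $V(\Delta)$: for each such $\pi$, writing $V^\pi_{\leq k-1}$ for the set of the first $k-1$ elements of $\pi$, we have
\[
b_i(\Delta,\Gamma) \;\leq\; \sum_{k=1}^n \tilde b_{i-1}\bigl(\lk_\Delta(\pi(k))_{V^\pi_{\leq k-1}},\, \lk_\Gamma(\pi(k))_{V^\pi_{\leq k-1}}\bigr),
\]
and similarly the alternating sum $\sum_{j=0}^i(-1)^{i-j}b_j(\Delta,\Gamma)$ is bounded above by the corresponding alternating sum of terms on the right.

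Next I would sum each of these $n!$ inequalities over all permutations $\pi$ of $V(\Delta)$. For a fixed vertex $v \in V(\Delta)$ and a fixed subset $W \subseteq V(\Delta)\setminus\{v\}$, the pair $(\pi(k),V^\pi_{\leq k-1})$ equals $(v,W)$ precisely when $k=|W|+1$, the first $|W|$ entries of $\pi$ are an ordering of $W$, and the remaining entries are any ordering of $V(\Delta)\setminus(W\cup\{v\})$. The number of such permutations is $|W|!\,(n-1-|W|)!$, so
\[
n!\cdot b_i(\Delta,\Gamma) \;\leq\; \sum_{v\in V(\Delta)}\ \sum_{W\subseteq V(\Delta)\setminus\{v\}} |W|!\,(n-1-|W|)!\cdot \tilde b_{i-1}\bigl(\lk_\Delta(v)_W,\lk_\Gamma(v)_W\bigr).
\]
Dividing by $n!$ and rewriting the coefficient as
\[
\frac{|W|!\,(n-1-|W|)!}{n!} \;=\; \frac{1}{n}\cdot\frac{1}{\binom{n-1}{|W|}}
\]
yields part (i). The identical argument applied to the alternating-sum version of Lemma \ref{1.1}, combined with the same combinatorial count of permutations, yields part (ii).

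There is no real obstacle in this argument; the one point requiring a bit of attention is the counting of orderings $\pi$ that produce a prescribed pair $(v,W)$, but this is a direct multiplication principle. The content of the proposition lies entirely in Lemma \ref{1.1}; the averaging trick simply symmetrizes the bound so that the right-hand side depends on vertices and subsets rather than on a choice of linear order.
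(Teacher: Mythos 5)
Your proposal is correct and follows essentially the same route as the paper: sum the Morse inequality of Lemma \ref{1.1} over all $n!$ orderings of $V(\Delta)$, count that each pair $(v,W)$ arises from exactly $|W|!\,(n-1-|W|)!$ permutations, and divide by $n!$ to obtain the stated coefficients, with the identical argument for the alternating sums in part (ii).
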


\begin{proof}
Let $V=V(\Delta)=\{v_1,\dots,v_n\}.$ We refer to the inequality in Lemma \ref{1.1}(i) as the Morse inequality with respect to the ordering $v_1,\dots,v_n$. Taking the sum of Morse inequalities with respect to \textbf{all} permutations of $v_1,v_2,\dots,v_n$, we obtain
$$n!\cdot b_i(\Delta,\Gamma)
\leq \sum_{v\in V} \left(\sum_{W \subseteq V\setminus\{v\}} (n-|W|-1)! (|W|)! \cdot\tilde{b}_{i-1}\big(\lk_\Delta(v)_W,\lk_\Gamma(v)_W \big) \right).$$
This is because for each $v\in V$ and $W\subseteq V\setminus\{v\}$, the set $W$ followed by $v$ shows up as an initial segment in exactly $(n-|W|-1)! (|W|)!$ permutations of $V$. Hence
$$b_i(\Delta,\Gamma)
\leq \frac 1 n \sum_{v \in V} \left(\sum_{W \subseteq V\setminus\{v\}} \frac{1}{{ n-1 \choose |W|}} \tilde{b}_{i-1}\big(\lk_\Delta(v)_W,\lk_\Gamma(v)_W\big)\right)$$
as desired. The proof of part (ii) follows from Lemma \ref{1.1}(ii) in a similar way.
\end{proof}

For a relative simplicial complex $(\Delta,\Gamma)$ on a ground set $V$, we define
$$\tilde{\sigma}_i^V(\Delta,\Gamma):= \frac{1}{|V| +1} \sum_{W \subseteq V} \frac{1}{{|V| \choose |W|}} \tilde{b}_i(\Delta_W,\Gamma_W).$$ 
The next lemma shows that $\tilde{\sigma}_i^V$ is independent of the choice of a ground set $V$. 
\begin{lemma}
\label{1.3}
With the same notation as above, if $V' \supset V$ then
$$\tilde{\sigma}_i^{V'} (\Delta,\Gamma)= \tilde{\sigma}_i^{V} (\Delta,\Gamma).$$
\end{lemma}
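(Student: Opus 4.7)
The plan is to reduce to the case where $V'$ has exactly one more element than $V$ and then verify a small binomial identity. Since we may enlarge the ground set one vertex at a time, it suffices by induction on $|V'|-|V|$ to treat the case $V' = V \cup \{v\}$ where $v \notin V$. Note that $V$ must contain $V(\Delta)$ (otherwise $\Delta$ is not a complex on $V$), so in particular $v$ is not a vertex of $\Delta$ or $\Gamma$, which means that for every $W \subseteq V$ we have $\Delta_{W \cup \{v\}} = \Delta_W$ and $\Gamma_{W \cup \{v\}} = \Gamma_W$.

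Set $n = |V|$. Splitting the sum defining $\tilde\sigma_i^{V'}$ according to whether a subset $W' \subseteq V'$ contains $v$ or not, and using the observation of the previous paragraph, I would write
\[
\tilde\sigma_i^{V'}(\Delta,\Gamma) = \frac{1}{n+2}\sum_{W \subseteq V} \left[\frac{1}{\binom{n+1}{|W|}} + \frac{1}{\binom{n+1}{|W|+1}}\right]\tilde b_i(\Delta_W,\Gamma_W).
\]
Comparing with the definition of $\tilde\sigma_i^V(\Delta,\Gamma)$ term by term, the whole statement reduces to verifying, for each $0\leq k \leq n$, the identity
\[
\frac{1}{n+2}\left[\frac{1}{\binom{n+1}{k}} + \frac{1}{\binom{n+1}{k+1}}\right] = \frac{1}{(n+1)\binom{n}{k}}.
\]

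This identity is a routine computation: after clearing denominators using $\binom{n+1}{k} = \frac{(n+1)!}{k!(n+1-k)!}$ and $\binom{n+1}{k+1} = \frac{(n+1)!}{(k+1)!(n-k)!}$, the left-hand side collapses to $\frac{k!(n-k)!}{(n+1)!}$ because $(n+1-k) + (k+1) = n+2$, which matches the right-hand side. There is no real obstacle here; the only step requiring any thought is the reassuring check that the induced subcomplexes $\Delta_{W \cup \{v\}}$ and $\Gamma_{W \cup \{v\}}$ really do coincide with $\Delta_W$ and $\Gamma_W$ when $v$ is not a vertex of $\Delta$, which relies on the fact that $V$ is required to contain $V(\Delta)$ (so the hypothesis $V' \supset V$ forces $v$ to lie outside the actual vertex set of $\Delta$). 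Once that is noted, the combinatorial identity does all the work.
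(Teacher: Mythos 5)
Your proposal is correct and follows essentially the same route as the paper: reduce to adjoining a single vertex $v \notin V(\Delta)$, split the sum over subsets of $V'$ according to whether they contain $v$, use $\Delta_{W\cup\{v\}}=\Delta_W$ and $\Gamma_{W\cup\{v\}}=\Gamma_W$, and collapse the two reciprocal binomial coefficients via $(n+1-k)+(k+1)=n+2$. The paper's proof is exactly this computation, so there is nothing to add.
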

\begin{proof}
We may assume that $V'=V \cup \{x\}$, and so $\{x\}\notin\Delta$.
Then
\begin{align*}
\tilde{\sigma}_i^{V'}(\Delta,\Gamma)
&= \frac{1}{|V|+2} \sum_{W \subseteq V} \left\{ \frac{1}{{|V|+1 \choose |W|}} \tilde{b}_i\big(\Delta_W,\Gamma_W\big) + \frac{1}{{|V| +1 \choose |W|+1}} \tilde{b}_i\big(\Delta_{W \cup \{x\}},\Gamma_{W \cup \{x\}}\big)\right\}\\
&= \frac{1}{|V|+2} \sum_{W \subseteq V} \left\{ \frac{(|W|)!(|V|-|W|)!}{(|V|+1)!} (|V|+2) \cdot\tilde{b}_i(\Delta_W,\Gamma_W)\right\}\\
&= \frac{1}{|V|+1} \sum_{W \subseteq V} \frac{1}{{|V| \choose |W|}} \tilde{b}_i(\Delta_W,\Gamma_W)=\tilde{\sigma}_i^V(\Delta,\Gamma),
\end{align*}
as desired.
\end{proof}

In view of Lemma \ref{1.3} we make the following
\begin{definition} \label{def:sigma-and-mu} Let $(\Delta,\Gamma)$ be a relative simplicial complex with $V(\Delta)=V$. For $i\geq -1$, the \textbf{$i$-th normalized $\sigma$-number} of $(\Delta,\Gamma)$ is
$$\tilde{\sigma}_i(\Delta,\Gamma):=\tilde{\sigma}_i^V(\Delta,\Gamma)= \frac{1}{|V| +1} \sum_{W \subseteq V} \frac{1}{{|V| \choose |W|}} \tilde{b}_i(\Delta_W,\Gamma_W).$$ 
For $i\geq 0$, the \textbf{$i$-th $\mu$-number} of $(\Delta,\Gamma)$ is 
$$\mu_i(\Delta,\Gamma):=\sum_{v\in V} \tilde{\sigma}_{i-1}\big(\lk_\Delta(v),\lk_\Gamma(v)\big).$$
\end{definition}

Several remarks are in order. First we note that our definition of relative $\tilde{\sigma}$-numbers agrees with that of non-relative $\sigma$-numbers from \cite{Murai-15} (which, in turn, slightly modifies the definition given in \cite{Bagchi:mu-vector,Bagchi-Datta-14,BDS}) except for the normalizing factor of $\frac{1}{|V|+1}$, that is, $\tilde{\sigma}_i(\Delta,\emptyset)=\frac{\sigma_i(\Delta)}{|V|+1} $. (For instance, $\tilde{\sigma}_{-1}(\Delta,\Gamma)=0$ if $\Gamma\neq\emptyset$, but $\tilde{\sigma}_{-1}(\Delta,\emptyset)=1/(f_0(\Delta)+1)$ as long as $\Delta\neq\emptyset$.) At the same time, our $\mu_i(\Delta,\emptyset)$ coincides with $\mu_i(\Delta)$ from~\cite{Murai-15}.

We also observe that using Definition \ref{def:sigma-and-mu}, Proposition \ref{1.2} can be rewritten as follows.

\begin{corollary}
\label{muinequality}
For a relative simplicial complex $(\Delta,\Gamma)$, one has
\begin{itemize}
\item[(i)] $b_i(\Delta,\Gamma) \leq \mu_i(\Delta,\Gamma)$, and
\item[(ii)] $\displaystyle \sum_{j=0}^i (-1)^{i-j} b_j(\Delta,\Gamma) \leq \sum_{j=0}^i (-1)^{i-j} \mu_j(\Delta,\Gamma)$.
\end{itemize}
\end{corollary}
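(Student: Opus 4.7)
The plan is to derive this corollary as an essentially formal consequence of Proposition \ref{1.2}, using the definition of $\mu_i$ together with Lemma \ref{1.3}. The key observation is that the expression inside the braces in Proposition \ref{1.2}(i) is, up to a careful choice of ground set, exactly $\tilde{\sigma}_{i-1}(\lk_\Delta(v),\lk_\Gamma(v))$.

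More precisely, set $V = V(\Delta)$ and $n = |V|$. For each vertex $v \in V$, the natural vertex set $V(\lk_\Delta(v))$ is a subset of $V \setminus \{v\}$. By Lemma \ref{1.3}, the quantity $\tilde{\sigma}_{i-1}(\lk_\Delta(v),\lk_\Gamma(v))$ does not depend on the choice of ground set containing $V(\lk_\Delta(v))$, so it may be computed on the $(n-1)$-element ground set $V \setminus \{v\}$. Plugging this into Definition \ref{def:sigma-and-mu}, I get
\[
\tilde{\sigma}_{i-1}(\lk_\Delta(v),\lk_\Gamma(v)) = \frac{1}{n} \sum_{W \subseteq V \setminus \{v\}} \frac{1}{\binom{n-1}{|W|}} \tilde{b}_{i-1}\bigl(\lk_\Delta(v)_W,\lk_\Gamma(v)_W\bigr),
\]
which is exactly the inner quantity appearing in Proposition \ref{1.2}(i). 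Summing over $v \in V$ and using $\mu_i(\Delta,\Gamma) = \sum_{v \in V} \tilde{\sigma}_{i-1}(\lk_\Delta(v),\lk_\Gamma(v))$ then transforms the right-hand side of Proposition \ref{1.2}(i) into $\mu_i(\Delta,\Gamma)$, proving part (i).

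For part (ii), I would apply the same identification term-by-term: on the right-hand side of Proposition \ref{1.2}(ii), swap the order of the (finite) $j$-summation and the $v$-summation, then apply the same substitution as above to each $\tilde{b}_{j-1}$-term. This converts $\sum_{v\in V} \bigl\{\tfrac{1}{n}\sum_W \tfrac{1}{\binom{n-1}{|W|}} \tilde{b}_{j-1}(\cdots)\bigr\}$ into $\mu_j(\Delta,\Gamma)$ for each $j$, yielding the claimed bound.

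There is no real obstacle here — the proof is a direct unpacking of definitions. The only minor point to check is that the formula from Lemma \ref{1.3} applies even in degenerate cases such as $\lk_\Delta(v) = \{\emptyset\}$ or $\lk_\Delta(v) = \emptyset$, but these are handled by the conventions on $\tilde{b}_{-1}$ recalled in Section~\ref{sect:preliminaries}.
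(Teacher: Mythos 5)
Your argument is correct and is exactly how the paper obtains Corollary \ref{muinequality}: the paper presents it as a rewriting of Proposition \ref{1.2}, using Lemma \ref{1.3} to identify the bracketed inner sum over $W \subseteq V\setminus\{v\}$ with $\tilde{\sigma}_{i-1}\bigl(\lk_\Delta(v),\lk_\Gamma(v)\bigr)$ and then summing over $v$ to recognize $\mu_i(\Delta,\Gamma)$. Nothing further is needed.
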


\noindent This result is analogous to the classical Morse inequalities and generalizes \cite[Theorem 1.8(a,c)]{Bagchi:mu-vector}.

It was proved by Bagchi and Datta that 
$\tilde \sigma_{i-1}(\Delta)=\tilde \sigma_{d-1-i}(\Delta)$
holds for any $(d-1)$-dimensional homology sphere $\Delta$ (\cite[Lemma 2.2]{Bagchi-Datta-14}) and that $\mu_i(\Delta)=\mu_{d-i}(\Delta)$ holds for any closed $d$-dimensional homology manifold $\Delta$ (\cite[Theorem 1.7]{Bagchi:mu-vector}). The following is an extension of these results to balls and manifolds with boundary, respectively.

\begin{proposition} \label{balls-duality} \quad
\begin{itemize}
\item[(i)] Let $\Delta$ be a $(d-1)$-dimensional $\field$-homology ball. Then 
$$
\tilde{\sigma}_{i-1}(\Delta,\partial\Delta)=\tilde{\sigma}_{d-1-i}(\Delta) \quad \mbox{for all } 0\leq i \leq d.$$
\item[(ii)]
Let $\Delta$ be a $d$-dimensional $\field$-homology manifold with boundary. Then $\mu_i(\Delta,\partial\Delta)=\mu_{d-i}(\Delta)$ for all $0\leq i\leq d$. 
\end{itemize}
\end{proposition}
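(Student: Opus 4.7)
The plan is to establish (i) first and then deduce (ii) as an immediate consequence.

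For (ii), every $v \in V(\Delta)$ is either an interior or a boundary vertex. If $v$ is interior, $\lk_\Delta(v)$ is a $(d-1)$-dimensional $\field$-homology sphere and $\lk_{\partial\Delta}(v) = \emptyset$, so the sphere case \cite[Lemma 2.2]{Bagchi-Datta-14} gives $\tilde\sigma_{i-1}(\lk_\Delta(v),\emptyset) = \tilde\sigma_{d-1-i}(\lk_\Delta(v))$. If $v$ is a boundary vertex, then $\lk_\Delta(v)$ is a $(d-1)$-dimensional $\field$-homology ball whose boundary is $\lk_{\partial\Delta}(v)$, so part (i) applied to this link yields $\tilde\sigma_{i-1}(\lk_\Delta(v),\lk_{\partial\Delta}(v)) = \tilde\sigma_{d-1-i}(\lk_\Delta(v))$. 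Summing over $v \in V(\Delta)$ and invoking Definition \ref{def:sigma-and-mu} produces (ii).

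For (i), expanding both sides via Definition \ref{def:sigma-and-mu} and exploiting the symmetry $\binom{|V|}{|W|} = \binom{|V|}{|V\setminus W|}$, the substitution $W \leftrightarrow V\setminus W$ reduces the claim to a term-by-term identity
\[
\tilde b_{i-1}(\Delta_W,(\partial\Delta)_W) \;=\; \tilde b_{d-1-i}(\Delta_{V\setminus W}) \qquad (\ast)
\]
for every $W \subseteq V$.

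To prove $(\ast)$, the key device is to cone off the boundary. Introduce a new vertex $\ast \notin V$ and set $\hat\Delta := \Delta \cup (\ast * \partial\Delta)$, where $\ast * \partial\Delta$ is the simplicial cone with apex $\ast$. A link-by-link check shows that $\hat\Delta$ is a $(d-1)$-dimensional $\field$-homology sphere on $V \cup \{\ast\}$: the link of $\ast$ is $\partial\Delta$, the link of an interior vertex of $\Delta$ is unchanged (a $(d-2)$-sphere), and the link of a boundary vertex $v$ is $\lk_\Delta(v) \cup (\ast * \lk_{\partial\Delta}(v))$, which is a $(d-2)$-sphere by the same construction applied one dimension lower. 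Applying the combinatorial Alexander duality underlying \cite[Lemma 2.2]{Bagchi-Datta-14}, namely $\tilde b_{i-1}(\Sigma_U) = \tilde b_{d-1-i}(\Sigma_{V(\Sigma)\setminus U})$ for any $\field$-homology $(d-1)$-sphere $\Sigma$, to $\hat\Delta$ with $U = W$ (so $\ast \notin U$) gives
\[
\tilde b_{i-1}(\Delta_W) \;=\; \tilde b_{i-1}(\hat\Delta_W) \;=\; \tilde b_{d-1-i}\!\left(\Delta_{V\setminus W} \cup \bigl(\ast * (\partial\Delta)_{V\setminus W}\bigr)\right),
\]
since $\hat\Delta_W = \Delta_W$ and $\hat\Delta_{(V\setminus W)\cup\{\ast\}} = \Delta_{V\setminus W} \cup \bigl(\ast * (\partial\Delta)_{V\setminus W}\bigr)$. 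The latter complex is $\Delta_{V\setminus W}$ with $(\partial\Delta)_{V\setminus W}$ coned off; a Mayer--Vietoris argument with the two pieces $\Delta_{V\setminus W}$ and the contractible cone $\ast * (\partial\Delta)_{V\setminus W}$ (intersecting in $(\partial\Delta)_{V\setminus W}$) matches its long exact sequence with that of the pair $(\Delta_{V\setminus W},(\partial\Delta)_{V\setminus W})$, yielding $\tilde b_{i-1}(\Delta_W) = \tilde b_{d-1-i}(\Delta_{V\setminus W},(\partial\Delta)_{V\setminus W})$. Swapping $W \leftrightarrow V \setminus W$ and $i \leftrightarrow d-i$ produces $(\ast)$, and a short summation over $W$ then delivers (i).

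The main obstacle is $(\ast)$ itself; all the content is concentrated in constructing the right auxiliary sphere $\hat\Delta$ so that the sphere case of Bagchi--Datta can be invoked. Minor care is needed in the degenerate case $(\partial\Delta)_{V\setminus W} = \{\emptyset\}$ (which occurs precisely when $V\setminus W$ contains no boundary vertex), where the Mayer--Vietoris step and the conventions relating reduced and relative Betti numbers recorded in Section~\ref{sect:preliminaries} must be reconciled; a direct check shows the identity $\tilde b_k\bigl(\Delta_{V\setminus W} \cup (\ast * (\partial\Delta)_{V\setminus W})\bigr) = \tilde b_k(\Delta_{V\setminus W},(\partial\Delta)_{V\setminus W})$ persists.
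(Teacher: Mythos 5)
Your proof is correct and follows essentially the same route as the paper: both reduce part (i) to the term-by-term duality $\tilde b_{i-1}(\Delta_W,(\partial\Delta)_W)=\tilde b_{d-1-i}(\Delta_{V\setminus W})$, prove it by coning off $\partial\Delta$ to obtain a homology sphere and invoking Alexander duality together with contractibility of the cone piece (your Mayer--Vietoris step is just the mirror image of the paper's excision step, with the apex placed on the complementary side), and both deduce part (ii) from part (i) plus the Bagchi--Datta sphere case applied to vertex links.
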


\begin{proof} According to Definition \ref{def:sigma-and-mu}, to prove part (i), it suffices to show that for a $(d-1)$-dimensional $\field$-homology ball $\Delta$ with $V(\Delta)=V$ and for any subset $W\subseteq V$, the following duality relation holds:
\begin{equation} \label{duality}
\tilde{b}_{i-1}\big(\Delta_W, (\partial\Delta)_W\big)=\tilde{b}_{d-1-i}\big(\Delta_{V\setminus W}\big) \quad \mbox{for }0\leq i \leq d.
\end{equation}
This duality is a simple consequence of Alexander duality. Indeed, let $u$ be a vertex not in $V$, let $u\ast\partial\Delta$ be the cone over $\partial\Delta$ with apex $u$, and let $\Lambda:=\Delta\cup (u\ast\partial\Delta)$. Then $\Lambda$ is an $\field$-homology sphere and $\Lambda_{W\cup\{u\}}=\Delta_{W}\cup \big(u\ast(\partial\Delta)_{W}\big)$. 
Using the excision theorem and the fact that $u\ast(\partial\Delta)_{W}$ is contractible, we conclude that 
$$
\tilde{b}_{i-1}\big(\Delta_W, (\partial\Delta)_W\big)=\tilde{b}_{i-1}\big(\Lambda_{\{u\}\cup W}, (u\ast(\partial\Delta)_{W})\big)=\tilde{b}_{i-1}\big(\Lambda_{\{u\}\cup W}\big).
$$
Eq.~\eqref{duality}, and hence also the statement of part (i), follows since by Alexander duality 
$$
\tilde{b}_{i-1}\left (\Lambda_{\{u\}\cup W}\right) =\tilde{b}_{d-1-i}\left(\Lambda_{V\setminus W}\right)=\tilde{b}_{d-1-i}\left(\Delta_{V\setminus W}\right).$$

Now, if $\Delta$ is an $\field$-homology manifold with boundary, and $v$ is a boundary vertex of $\Delta$, then $\lk_\Delta(v)$ is an $\field$-homology ball whose boundary is given by $\lk_{\partial\Delta}(v)$. Thus part (ii) is an immediate consequence of part (i) and an analogous result for spheres proved in \cite[Lemma 2.2]{Bagchi-Datta-14}.
\end{proof}

\section{Upper bounds on graded Betti numbers}
In this section, we develop upper bounds on certain alternating sums of graded Betti numbers of the Stanley-Reisner modules of relative simplicial complexes. We are especially interested in the case of $\field[\Delta,\partial\Delta]$ where (i) $\Delta$ is a homology ball or a homology sphere with the WLP (see Theorem \ref{4.6}), or (ii) $\Delta$ is a normal pseudomanifold with boundary (Theorem \ref{4.9}).
To this end, we first establish several algebraic results.

For a graded $\field$-algebra $R$ with the maximal ideal $\mideal_R$ and a finitely-generated graded $R$-module $M$, the numbers $$\beta_{i,j}^R(M)=\dim_\field \big(\Tor_i^R(M,\field)\big)_j$$ are called the \textbf{graded Betti numbers} of $M$ over $R$; here we identify $\field$ with $R/\mideal_R$. As we will see in the next section, these numbers are closely related to the $\tilde \sigma$-numbers introduced in the previous section. 

We will mainly consider the graded Betti numbers over $S=\field[x_1,\dots,x_n]$ and will need the following two easy facts about them.

\begin{lemma}
\label{4.1} Let $M$ be a finitely generated graded $S$-module. If $M$ is generated by elements of degree $\geq j+1$, then $\beta_{i,i+\ell}^S(M)=0$ for all $i \geq 0$ and $\ell \leq j$.
\end{lemma}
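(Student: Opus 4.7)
The plan is to analyze a minimal graded free resolution of $M$ over $S$ and track how the degrees of the generators of the free modules grow with homological degree. Recall that if
\[
\cdots \longrightarrow F_i \longrightarrow F_{i-1} \longrightarrow \cdots \longrightarrow F_0 \longrightarrow M \longrightarrow 0
\]
is a minimal graded free resolution with $F_i=\bigoplus_k S(-a_{i,k})$, then $\beta^S_{i,\ell}(M)$ equals the number of $k$ with $a_{i,k}=\ell$; in particular, showing $\beta^S_{i,i+\ell}(M)=0$ for $\ell\leq j$ is equivalent to showing that each $F_i$ is generated in degrees strictly greater than $i+j$.

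First I would treat the base case $i=0$. By hypothesis $M$ is generated in degrees $\geq j+1$, and a minimal set of generators of $M$ lifts to a minimal set of generators of $F_0$, so every $a_{0,k}\geq j+1>j+0$. This yields $\beta^S_{0,0+\ell}(M)=0$ for all $\ell\leq j$.

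Next I would run an induction on $i$, showing that in a minimal resolution $F_i$ is generated in degrees $\geq j+1+i$. The key input is the standard fact (true because $S$ is local-like, i.e., $\mideal_S$-graded and we use a minimal resolution) that the differentials $F_{i+1}\to F_i$ have image inside $\mideal_S F_i$; equivalently, every entry of the matrix representing the differential is a homogeneous element of $S$ of positive degree. Consequently, if $F_i$ is generated in degrees $\geq j+1+i$, then the first syzygies of $F_i$, which form a minimal set of generators of $F_{i+1}$, must have degree strictly greater than the minimum degree of a generator of $F_i$, hence $\geq j+2+i = j+1+(i+1)$. Iterating gives the claim for all $i\geq 0$, which translates directly into $\beta^S_{i,i+\ell}(M)=0$ for $\ell\leq j$.

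There is no real obstacle here; the argument is entirely formal once one invokes minimality of the resolution. The only point that deserves a moment of care is precisely the minimality condition: without it, the matrix entries of a differential could be scalars and the degree shift between $F_i$ and $F_{i+1}$ could fail, so I would make sure to note (or cite) that $\beta^S_{i,j}(M)=\dim_\field \operatorname{Tor}^S_i(M,\field)_j$ can be read off any minimal free resolution, which is why we are allowed to assume the resolution is minimal.
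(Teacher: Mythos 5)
Your proof is correct and is essentially the paper's argument: the paper also handles $i=0$ via $\beta^S_{0,k}(M)=\dim_\field(M/\mideal M)_k$ and then inducts using the fact that the syzygy module of a module generated in degrees $\geq \ell$ is generated in degrees $\geq \ell+1$, together with $\beta^S_{i,k}(\mathrm{Syz}(M))=\beta^S_{i+1,k}(M)$ — which is exactly your degree-shift argument along a minimal free resolution, just phrased through syzygies rather than the matrices of the differentials.
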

\begin{proof}
The assertion holds when $i=0$ since $\beta_{0,k}^S(M)=\dim_\field(M/\mideal M)_k$ for all $k$, where $\mideal=(x_1,\dots,x_n)$. For $i>0$, the assertion follows from the facts that (i) if $M$ is generated by elements of degree $\geq \ell$ then its syzygy module, $\mathrm{Syz}(M)$, is generated by elements of degree $\geq \ell+1$, and that (ii) $\beta_{i,k}^S(\mathrm{Syz}(M))=\beta_{i+1,k}^S(M)$ for all $i \geq 0$ and $k \in \Z$.
\end{proof}

\begin{lemma}
\label{4.2}
If $M$ is a finitely generated Artinian graded $S$-module, then $\beta_{i,i+\ell}^S(M)=0$ for all $i \geq 0$ and $\ell >\max\{k:M_k \ne 0\}$.
\end{lemma}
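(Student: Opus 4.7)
The plan is to compute the graded Tor groups $\Tor_i^S(M,\field)$ directly from the Koszul resolution of $\field$ over $S$. Recall that the Koszul complex $K_\bullet$ on the variables $x_1,\dots,x_n$ is a minimal graded free resolution of $\field = S/\mathfrak{m}$, in which the term $K_i = \bigwedge^i V \otimes_\field S$ (with $V = \field\langle e_1,\dots,e_n\rangle$ placed in degree $1$) is a free $S$-module whose generators all sit in degree exactly $i$. Consequently, $\beta_{i,i+\ell}^S(M) = \dim_\field H_i(K_\bullet \otimes_S M)_{i+\ell}$.

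From this I would observe that the degree $(i+\ell)$ component of $K_i \otimes_S M$ is canonically isomorphic to $\bigwedge^i V \otimes_\field M_\ell$, since any homogeneous element of this degree must be a degree-$i$ generator of $K_i$ tensored with a degree-$\ell$ element of $M$. The conclusion is then immediate: if $\ell > d := \max\{k : M_k \neq 0\}$, then $M_\ell = 0$, so $(K_i \otimes_S M)_{i+\ell} = 0$ for every $i \geq 0$, and therefore $\beta_{i,i+\ell}^S(M) = 0$, as desired.

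There is no real obstacle here; the argument is just a degree-accounting observation in the Koszul complex. The Artinian hypothesis is used only to guarantee that $d = \max\{k : M_k \neq 0\}$ is finite and the statement is meaningful (a finitely generated Artinian graded $S$-module has finite length, so in particular is bounded above). If one preferred, the same conclusion could instead be read off from the minimal free resolution of $M$ itself, but the Koszul route is the most transparent.
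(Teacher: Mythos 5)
Your proof is correct and is essentially the paper's own argument: both compute $\Tor_i^S(M,\field)$ as $H_i(\mathbf K_\bullet \otimes_S M)$ and note that $\mathbf K_i \otimes_S M$ (a direct sum of copies of $M$ shifted by degree $i$) vanishes in degrees above $i+\max\{k : M_k \neq 0\}$. Your identification of the degree-$(i+\ell)$ piece with $\bigwedge^i V \otimes_\field M_\ell$ is just a more explicit phrasing of that same degree count.
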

\begin{proof}
Let $\mathbf{K}_\bullet$ be the Koszul complex with respect to the sequence $x_1,\dots,x_n$ (see \cite[\S 1.6]{BH}). Then $\Tor_i^S(M,\field)$ is isomorphic to the $i$-th homology of $\mathbf K_\bullet \otimes_S M$.
Since the module $\mathbf{K}_i \otimes_S M$ is isomorphic to the direct sum of copies of $M(-i)$, where $M(-i)$ is the graded module $M$ with grading shifted by degree $-i$, $\mathbf K_i \otimes_S M$ has no non-zero elements in degrees larger than $i+\max\{k:M_k \ne 0\}$. The statement follows.
\end{proof}

In the next proposition we study alternating sums of graded Betti numbers of the form $\sum_{k \geq 0} (-1)^k \beta_{i+k,i+\ell}^S(M)$. These sums are finite sums since $\beta_{i,j}^S(M)=0$ for $i >n$. As we will see later, these sums are related to the alternating sums of the $\mu$-numbers from Corollary \ref{muinequality}.

\begin{proposition}
\label{4.3}
Let $M$ be a finitely generated graded $S$-module and let $h_k=\dim_\field M_k$ for all $k \in \Z$.
Then
$$\sum_{k \geq 0} (-1)^k \beta_{i+k, i+\ell}^S(M) \leq \sum_{k \geq 0} (-1)^k h_{\ell-k} {n \choose i+k} \quad \mbox{ for all $i \geq 0$ and $\ell \in \Z$}.$$
Moreover, $\sum_{k \geq 0}(-1)^k \beta_{k,\ell}^S(M)= \sum_{k \geq 0} (-1)^k h_{\ell-k} {n \choose k}$ for all $\ell \in \ZZ$.
\end{proposition}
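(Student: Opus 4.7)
The plan is to compute $\Tor_\bullet^S(M,\field)$ via the Koszul complex $\mathbf{K}_\bullet$ of $x_1,\ldots,x_n$, which, since $S$ is a polynomial ring, is a minimal graded free resolution of $\field$. Explicitly, $\mathbf{K}_k \cong S(-k)^{\binom{n}{k}}$, so the chain complex $\mathbf{K}_\bullet \otimes_S M$ has
\[
\dim_\field (\mathbf{K}_k \otimes_S M)_{j} = \binom{n}{k} h_{j-k}
\]
and its homology in homological degree $k$ and internal degree $j$ is exactly $\Tor_k^S(M,\field)_j$, hence has dimension $\beta_{k,j}^S(M)$.

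For the "moreover" part, I would apply the Euler–Poincaré principle to the (finite-dimensional, for fixed internal degree $\ell$) chain complex $\big((\mathbf{K}_\bullet \otimes_S M)_\ell,\, d_\bullet\big)$: the alternating sum of dimensions of the terms equals the alternating sum of dimensions of the homology groups. This gives
\[
\sum_{k\geq 0}(-1)^k \beta_{k,\ell}^S(M) \;=\; \sum_{k\geq 0}(-1)^k \binom{n}{k} h_{\ell-k},
\]
which is the second claim.

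For the inequality, I would use the standard partial Euler sum trick. Fix internal degree $j:=i+\ell$ and write $C_k := (\mathbf{K}_k \otimes_S M)_j$ with cycles $Z_k$, boundaries $B_k$, homology $H_k = Z_k/B_k$. From $\dim C_k = \dim Z_k + \dim B_{k-1}$ and $\dim H_k = \dim Z_k - \dim B_k$, I get $\dim C_k - \dim H_k = \dim B_{k-1} + \dim B_k$. Summing with signs starting at $k=i$ telescopes and (since $B_N=0$ for $N$ large) leaves
\[
\sum_{k\geq 0}(-1)^k \dim C_{i+k} \;-\; \sum_{k\geq 0}(-1)^k \dim H_{i+k} \;=\; \dim B_{i-1} \;\geq\; 0.
\]
Reindexing and translating back via $\dim C_{i+k}=\binom{n}{i+k} h_{\ell-k}$ and $\dim H_{i+k}=\beta_{i+k,\,i+\ell}^S(M)$ yields the asserted inequality.

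There is no real obstacle here beyond bookkeeping; the only thing to be careful about is verifying that the partial-sum identity really telescopes correctly and that $B_{-1}=0$, which makes the equality $(i=0)$ consistent with the inequality.
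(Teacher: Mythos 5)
Your proof is correct, and it takes a genuinely different route from the paper's. You compute $\Tor^S_\bullet(M,\field)$ directly from the Koszul complex $\mathbf K_\bullet\otimes_S M$, note that $\dim_\field(\mathbf K_k\otimes_S M)_j=\binom{n}{k}h_{j-k}$, and then apply the standard partial Euler--characteristic estimate in the fixed internal degree $j=i+\ell$: with $b_k=\dim B_k$ the telescoping identity
\[
\sum_{k\geq 0}(-1)^k\dim C_{i+k}-\sum_{k\geq 0}(-1)^k\dim H_{i+k}=b_{i-1}\geq 0
\]
gives the inequality at once, and $B_{-1}=0$ gives the equality for $i=0$; all sums are finite since $\mathbf K_k=0$ for $k>n$ and $M_j=0$ for $j\ll 0$. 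The paper instead argues via the degree truncations $M_{\leq\ell}$ and $M_{\geq\ell}$: it uses Lemmas \ref{4.1} and \ref{4.2} to show $\Tor_i^S(M,\field)_{i+\ell}\cong\Tor_i^S(M_{\leq\ell+1},\field)_{i+\ell}$, then runs long exact sequences of Tor coming from $0\to M_{\ell+1}\to M_{\leq\ell+1}\to M_{\leq\ell}\to 0$, identifying the deficit in the alternating sum as $\dim_\field\Ker\varphi_{i-1,i+\ell}$ for an explicit connecting-type map. The two arguments are close in spirit (both are bookkeeping with an alternating sum whose error term is a nonnegative dimension), but yours is shorter and more self-contained, locating the error term inside the Koszul complex itself, whereas the paper's fits into the truncation framework of Lemmas \ref{4.1}--\ref{4.2} that it has already set up; since the extra quantity $\Ker\varphi_{i-1,i+\ell}$ is not reused elsewhere, your streamlined argument loses nothing needed for the applications (in particular the $i=0$ equality feeding into Lemma \ref{4.5}(iii) and Theorem \ref{4.6}).
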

\begin{proof}
For $\ell \in \ZZ$, let $M_{\geq \ell}:=\bigoplus_{k \geq \ell}M_k$ and let $M_{\leq \ell}:=M / M_{\geq \ell+1}$. Note that $M_{\geq \ell}$ is a submodule of $M$. We first claim that
\begin{align}
\label{4-1}
\Tor_i^S(M,\FF)_{i+\ell} \cong \Tor_i^S(M_{\leq \ell+1},\FF)_{i+\ell} \quad \mbox{for all $i \geq 0$ and $\ell \in \ZZ$}.
\end{align}
Indeed, the short exact sequence
$$
0 \longrightarrow M_{\geq \ell +2} \longrightarrow M \longrightarrow M_{\leq \ell+1} \longrightarrow 0
$$
induces the following exact sequence
\begin{align*}
&\Tor_i^S(M_{\geq \ell+2},\FF)_{i+\ell} \longrightarrow \Tor_i^S(M,\FF)_{i+\ell}
\longrightarrow \Tor_i^S(M_{\leq \ell+1},\FF)_{i+\ell}\\
&\ \ \longrightarrow \Tor_{i-1}^S(M_{\geq \ell+2},\FF)_{i+\ell}.
\end{align*}
By Lemma \ref{4.1}, the head and the tail of this sequence are both zero modules, and \eqref{4-1} follows.

Next, consider the short exact sequence
\begin{align}
\label{SS}
0 \longrightarrow M_{\ell +1} \longrightarrow M_{\leq \ell+1} \longrightarrow M_{\leq \ell} \longrightarrow 0,
\end{align}
where we identify $M_{\ell+1}$ with the submodule of $M_{\leq \ell+1}$ consisting of all elements of $M_{\leq \ell+1}$ of degree $\ell+1$.
Note that $M_{\ell+1}$ is isomorphic to the direct sum of $h_{\ell+1}$ copies of $\FF(-\ell-1)$ as $S$-modules. Also, $\beta^S_{i,i}(\FF)={n \choose i}$ for all $i$, since the Koszul complex with respect to the sequence $x_1,\dots,x_n$
gives a minimal free $S$-resolution of $\FF$ (see \cite[Corollary 1.6.14]{BH}).
Thus 
\begin{align}
\label{4-2}
\beta_{i,i+\ell+1}^S(M_{\ell+1})=h_{\ell+1} {n \choose i} \quad \mbox{for all $i$}.
\end{align}

The short exact sequence \eqref{SS} induces the long exact sequence 
\begin{align*}
&0=\Tor_i^S(M_{\ell+1},\FF)_{i+\ell} \longrightarrow \Tor_i^S(M_{\leq \ell+1},\FF)_{i+\ell}
\longrightarrow \Tor_i^S(M_{\leq \ell},\FF)_{i+\ell}\\
& \stackrel {\psi_{i-1,i+\ell}} \longrightarrow\Tor_{i-1}^S(M_{\ell+1},\FF)_{i+\ell} \stackrel {\varphi_{i-1,i+\ell}} \longrightarrow \Tor_{i-1}^S(M_{\leq \ell+1},\FF)_{i+\ell}
\longrightarrow \Tor_{i-1}^S(M_{\leq \ell},\FF)_{i+\ell}=0,
\end{align*}
where the first term is zero by Lemma \ref{4.1} and the last one is zero by Lemma \ref{4.2}.
Then 
\begin{align}
\nonumber 
\beta_{i,i+\ell}^S(M_{\leq \ell+1})
&=
\beta_{i,i+\ell}^S(M_{\leq \ell})-\dim_\FF (\Image \psi_{i-1,i+\ell})\\
\label{4-3}&=
\beta_{i,i+\ell}^S(M_{\leq \ell})-\dim_\FF (\Ker \varphi_{i-1,i+\ell}) \quad\mbox{for all $i \geq 0$};
\end{align}
here $\Image \psi_{-1,\ell}$ and $\Ker \varphi_{-1,\ell}$ are zero modules. Furthermore,
$$\beta_{i-1,i+\ell}^S(M_{\leq \ell+1})=\beta_{i-1,i+\ell}^S(M_{\ell+1})-\dim_\FF (\Ker \varphi_{i-1,i+\ell}) \quad \mbox{for all $i \geq 1$}.$$
By replacing $i-1$ with $i$ and $\ell+1$ with $\ell$, the above equation can be rewritten as
\begin{align}
\label{4-4}
\beta_{i,i+\ell}^S(M_{\leq \ell})=\beta_{i,i+\ell}^S(M_{\ell})-\dim_\FF (\Ker \varphi_{i,i+\ell}) \quad \mbox{for all $i \geq 0$ and $\ell \in \ZZ$}.
\end{align}

Combining equations \eqref{4-1}, \eqref{4-3} and \eqref{4-4}, we conclude that 
\begin{equation} \label{4-6} \beta_{i,i+\ell}^S(M)
=\beta_{i,i+\ell}^S(M_{\leq \ell+1})
=\beta_{i,i+\ell}^S(M_\ell)
-\{ \dim_\FF (\Ker \varphi_{i,i+\ell})+\dim_\FF (\Ker \varphi_{i-1,i+\ell})\}
\end{equation}
for all $i \geq 0$ and $\ell \in \ZZ$.
Then
\begin{align*}
& \sum_{k \geq 0} (-1)^k \beta_{i+k,i+\ell}^S(M)\\
&\stackrel{\mbox{\small by \eqref{4-6}}}{=} \sum_{k \geq 0}(-1)^k\beta_{i+k,i+\ell}^S (M_{\ell-k}) - \sum_{k \geq 0} (-1)^k \big\{ \dim_\FF (\Ker \varphi_{i+k,i+\ell})+\dim_\FF (\Ker \varphi_{i-1+k,i+\ell})\big\}\\
&\stackrel{\mbox{\small by \eqref{4-2}}}{=} \sum_{k \geq 0} (-1)^k h_{\ell-k} {n \choose i+k} - \dim_\FF (\Ker \varphi_{i-1,i+\ell}).
\end{align*}
This proves the desired statement. The equality when $i=0$ follows since $\Ker \varphi_{-1,\ell}$ is zero.
\end{proof}

Another result we will make use of is the following lemma that appears in \cite[Corollary 8.5]{Migliore-Nagel-03}.
\begin{lemma}
\label{4.4}
Let $M$ be a finitely-generated graded $S$-module, $\theta \in S$ a linear form,
and $\ell$ an integer.
Suppose that the multiplication map $\times \theta :M_k \to M_{k+1}$
is injective for $k \leq \ell$. Then
\begin{itemize}
\item[(i)] 
$\beta_{i,i+k}^S(M) = \beta_{i,i+k}^{S/\theta S}(M/\theta M)$ if $i=1$ and $k \leq \ell$ or if $i\geq 2$ and $k \leq \ell-1$, and
\item[(ii)] $\beta_{i,i+\ell}^S(M) \leq \beta_{i,i+\ell}^{S/\theta S}(M/\theta M)$ for all $i\geq 2$.
\end{itemize}
\end{lemma}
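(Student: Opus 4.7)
The natural approach is the Cartan--Eilenberg change-of-rings spectral sequence, combined with a degree estimate on the kernel of multiplication by $\theta$. Set $K := \ker\bigl(\times\theta : M \to M(1)\bigr)$. The hypothesis says $K_k = 0$ for all $k \leq \ell$, so $K$ lives in degrees $\geq \ell+1$ and hence $K(-1)$ lives in degrees $\geq \ell+2$.

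Applying $- \otimes_S M$ to the two-term Koszul resolution $0 \to S(-1) \xrightarrow{\times\theta} S \to S/\theta S \to 0$ yields
$$\Tor_0^S(S/\theta S, M) = M/\theta M, \qquad \Tor_1^S(S/\theta S, M) = K(-1), \qquad \Tor_q^S(S/\theta S, M) = 0 \text{ for } q \geq 2.$$
Hence the change-of-rings spectral sequence
$$E_2^{p,q} = \Tor_p^{S/\theta S}\bigl(\Tor_q^S(S/\theta S, M), \FF\bigr) \Longrightarrow \Tor_{p+q}^S(M, \FF)$$
has only two nonzero rows and collapses at $E_3$, producing a long exact sequence
$$\cdots \to \Tor_{i-1}^{S/\theta S}\bigl(K(-1), \FF\bigr) \to \Tor_i^S(M, \FF) \to \Tor_i^{S/\theta S}\bigl(M/\theta M, \FF\bigr) \to \Tor_{i-2}^{S/\theta S}\bigl(K(-1), \FF\bigr) \to \cdots$$

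The degree bookkeeping is then straightforward. Since $K(-1)$ is generated in degrees $\geq \ell+2$, the analogue of Lemma \ref{4.1} applied over the polynomial ring $S/\theta S$ gives $\Tor_p^{S/\theta S}(K(-1), \FF)_j = 0$ whenever $j < p + \ell + 2$. Specializing the long exact sequence to internal degree $j = i+k$, the left bounding term $\Tor_{i-1}^{S/\theta S}(K(-1), \FF)_{i+k}$ vanishes precisely when $k \leq \ell$, and the right one $\Tor_{i-2}^{S/\theta S}(K(-1), \FF)_{i+k}$ vanishes precisely when $k \leq \ell - 1$. For part (i), both bounding terms vanish when $i \geq 2, k \leq \ell-1$, and also when $i = 1, k \leq \ell$ (using $\Tor_{-1} = 0$), giving the equality $\beta_{i,i+k}^S(M) = \beta_{i,i+k}^{S/\theta S}(M/\theta M)$. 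For part (ii), at $i \geq 2, k = \ell$ only the left term vanishes, giving the injection $\Tor_i^S(M, \FF)_{i+\ell} \hookrightarrow \Tor_i^{S/\theta S}(M/\theta M, \FF)_{i+\ell}$ and hence the claimed inequality.

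The main obstacle is setting up the spectral sequence and managing the shift introduced by $K(-1)$; everything else is routine degree tracking. An alternative that avoids spectral sequences would split the four-term sequence $0 \to K(-1) \to M(-1) \xrightarrow{\times\theta} M \to M/\theta M \to 0$ into two short exact sequences, apply the Tor long exact sequence to each, and then convert $\Tor^S$ into $\Tor^{S/\theta S}$ on modules annihilated by $\theta$ via the standard identity $\beta_{i,j}^S(N) = \beta_{i,j}^{S/\theta S}(N) + \beta_{i-1, j-1}^{S/\theta S}(N)$; the degree analysis is essentially the same.
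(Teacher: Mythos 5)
Your argument is correct. Note, though, that the paper does not prove Lemma \ref{4.4} at all: it is quoted from \cite[Corollary 8.5]{Migliore-Nagel-03}, so there is no internal proof to compare against; what you have written is a legitimate self-contained substitute. Your route — the two-row Cartan--Eilenberg change-of-rings spectral sequence for $S \to S/\theta S$, using $\Tor_0^S(S/\theta S,M)=M/\theta M$, $\Tor_1^S(S/\theta S,M)=K(-1)$ with $K=\ker(\times\theta)$, and the resulting long exact sequence
$\cdots \to \Tor_{i-1}^{S/\theta S}(K(-1),\FF) \to \Tor_i^S(M,\FF) \to \Tor_i^{S/\theta S}(M/\theta M,\FF) \to \Tor_{i-2}^{S/\theta S}(K(-1),\FF)\to\cdots$ — is the standard way such restriction statements are proved, and your degree bookkeeping is right: $K(-1)$ vanishes in degrees $\le \ell+1$, so the Lemma \ref{4.1}-type bound over $S/\theta S$ kills the left bounding term in internal degree $i+k$ exactly when $k\le\ell$ and the right one exactly when $k\le\ell-1$, which yields the isomorphisms in (i) (including the $i=1$ case via $\Tor_{-1}=0$) and the injection in (ii). Two small points you use implicitly and should state: $K$ (hence $K(-1)$) is annihilated by $\theta$, which is what makes it an $S/\theta S$-module so that $\Tor^{S/\theta S}(K(-1),\FF)$ is meaningful and the spectral sequence has the asserted $E_2$-page; and for $\theta\neq 0$ the ring $S/\theta S$ is again a standard graded polynomial ring, so the analogue of Lemma \ref{4.1} applies verbatim (the degenerate case $\theta=0$ is vacuous or trivial). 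Your alternative sketch — splitting $0\to K(-1)\to M(-1)\to M\to M/\theta M\to 0$ and using $\beta^S_{i,j}(N)=\beta^{S/\theta S}_{i,j}(N)+\beta^{S/\theta S}_{i-1,j-1}(N)$ for modules $N$ killed by $\theta$ — is essentially the argument in Migliore--Nagel and avoids quoting a spectral sequence, at the cost of slightly longer diagram chasing; either version is acceptable here.
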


Combining Proposition \ref{4.3} and Lemma \ref{4.4}, we obtain the following result.

\begin{lemma}
\label{4.5}
Let $(\Delta,\Gamma)$ be a relative simplicial complex with $V(\Delta)=[n]$
and let $j$ be a positive integer.
Suppose that $\Delta$ has dimension $d-1$
and that there are linear forms $\theta_1,\dots,\theta_{d+1} \in \FF[\Delta]$
such that
the multiplication map
$$\times \theta_k: \big(\FF[\Delta,\Gamma]/((\theta_1,\dots,\theta_{k-1})\FF[\Delta,\Gamma])\big)_{\ell} \to \big(\FF[\Delta,\Gamma]/((\theta_1,\dots,\theta_{k-1})\FF[\Delta,\Gamma]) \big)_{\ell+1}$$
is injective for all $\ell \leq j$ and $k=1,2,\dots,d+1$.
Then
\begin{itemize}
\item[(i)] $\dim_\FF \big(\FF[\Delta,\Gamma]/((\theta_1,\dots,\theta_{d+1})\FF[\Delta,\Gamma])\big)_\ell=g_\ell(\Delta,\Gamma)$
for all $\ell=0,1,\dots,j+1$,
\item[(ii)] 
$\displaystyle \sum_{k \geq 0} (-1)^k \beta^S_{i+k,i+\ell} (\FF[\Delta,\Gamma]) \leq \sum_{k \geq 0} (-1)^k g_{\ell-k}(\Delta,\Gamma) {n-d-1 \choose i+k}
\mbox{ for all $i \geq 1$ and $\ell \leq j$,}$ and
\item[(iii)] 
$\displaystyle \sum_{k \geq 0} (-1)^k \beta^S_{k,\ell} (\FF[\Delta,\Gamma]) = \sum_{k \geq 0} (-1)^k g_{\ell-k}(\Delta,\Gamma) {n-d-1 \choose k}$ for $\ell \leq j+1$.
\end{itemize}
\end{lemma}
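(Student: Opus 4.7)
\medskip\noindent\textit{Proof plan.} I will treat the three parts in the order (i), (iii), (ii), combining the partial-regularity hypothesis with Propositions~\ref{4.3} and~\ref{4.4}.

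For (i), set $M^{(k)}:=\FF[\Delta,\Gamma]/(\theta_1,\dots,\theta_k)\FF[\Delta,\Gamma]$. The injectivity hypothesis makes the short exact sequence
\[
0\longrightarrow M^{(k-1)}(-1)\stackrel{\times\theta_k}{\longrightarrow}M^{(k-1)}\longrightarrow M^{(k)}\longrightarrow 0
\]
exact in every internal degree $\leq j+1$, for each $k=1,\dots,d+1$. Passing to Hilbert functions and iterating $d+1$ times shows that $\dim_\FF M^{(d+1)}_\ell$ equals the coefficient of $t^\ell$ in $(1-t)^{d+1}\mathrm{Hilb}(\FF[\Delta,\Gamma];t)$ whenever $\ell\leq j+1$. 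Combined with the standard face-by-face identity $(1-t)^d\mathrm{Hilb}(\FF[\Delta,\Gamma];t)=\sum_s h_s(\Delta,\Gamma)\,t^s$, this is the coefficient of $t^\ell$ in $(1-t)\sum_s h_s t^s=\sum_s g_s(\Delta,\Gamma)\,t^s$, namely $g_\ell(\Delta,\Gamma)$.

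For (iii), I would bypass $M^{(d+1)}$ entirely and apply the equality clause of Proposition~\ref{4.3} to $\FF[\Delta,\Gamma]$ over $S$, giving
\[
\sum_{k\geq 0}(-1)^k\beta^S_{k,\ell}(\FF[\Delta,\Gamma])=\sum_{k\geq 0}(-1)^k\dim_\FF\FF[\Delta,\Gamma]_{\ell-k}\binom{n}{k}.
\]
Both sides equal the coefficient of $t^\ell$ in $(1-t)^n\mathrm{Hilb}(\FF[\Delta,\Gamma];t)=(1-t)^{n-d-1}\sum_s g_s(\Delta,\Gamma)\,t^s$, hence coincide with $\sum_{k\geq 0}(-1)^k g_{\ell-k}(\Delta,\Gamma)\binom{n-d-1}{k}$. (Note that no injectivity hypothesis is actually needed for (iii); the restriction $\ell\leq j+1$ is automatic from the generating-function identity.)

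For (ii), I would iterate Lemma~\ref{4.4} through the sequence $\theta_1,\dots,\theta_{d+1}$, comparing the graded Betti numbers of $M^{(k-1)}$ over the current polynomial-ring quotient with those of $M^{(k)}$ over the next quotient. After all $d+1$ steps one obtains
\[
\beta^S_{i+k,i+\ell}(\FF[\Delta,\Gamma])\leq \beta^{\bar S}_{i+k,i+\ell}(M^{(d+1)})
\]
for all $i\geq 1$, $k\geq 0$ and $\ell\leq j$, where $\bar S:=S/(\theta_1,\dots,\theta_{d+1})S\cong\FF[y_1,\dots,y_{n-d-1}]$. Moreover Lemma~\ref{4.4} gives equality everywhere except possibly in the single case $k=0$, $\ell=j$, $i\geq 2$ (the $i=1$ branch of Lemma~\ref{4.4} keeps equality up through offset $j$). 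Because this lone possibly-strict term carries the positive sign $(-1)^0$, the inequality is preserved after alternating summation. Applying the inequality clause of Proposition~\ref{4.3} to $M^{(d+1)}$ over the $(n-d-1)$-variable polynomial ring $\bar S$, together with part (i) to substitute $\dim_\FF M^{(d+1)}_{\ell-k}=g_{\ell-k}(\Delta,\Gamma)$ (valid because $\ell-k\leq\ell\leq j\leq j+1$), yields the claimed bound.

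The main obstacle is the bookkeeping in (ii): one must verify that after iterating Lemma~\ref{4.4} through all $d+1$ forms, the single summand where strictness can remain is the $k=0$ term and therefore carries a positive sign, so that no cancellation spoils the alternating-sum bound. Once that is settled, the conversion to $g$-numbers is a purely Hilbert-function substitution enabled by (i).
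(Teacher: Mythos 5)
Your argument is correct, and for parts (i) and (ii) it is essentially the paper's own proof: part (i) is the same Hilbert-series/difference computation, and part (ii) is exactly the paper's chain of inequalities (its analogues of \eqref{4.5-1} and \eqref{4.5-2}), obtained by iterating Lemma \ref{4.4} through $\theta_1,\dots,\theta_{d+1}$ and then applying Proposition \ref{4.3} to $M^{(d+1)}$ over $\bar S\cong\FF[y_1,\dots,y_{n-d-1}]$, with part (i) converting $\dim_\FF M^{(d+1)}_{\ell-k}$ into $g_{\ell-k}(\Delta,\Gamma)$. Your sign bookkeeping is the right justification for the step the paper states tersely as ``by Lemma \ref{4.4}'': with $\ell\leq j$ the only term where Lemma \ref{4.4} permits strict inequality is indeed $k=0$ with homological index $\geq 2$, and it carries the positive sign, so the alternating sum is preserved. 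Where you genuinely diverge is part (iii): the paper stays with the quotient $M/\Theta M$, using $\beta^S_{0,\ell}(M)=\beta^{\bar S}_{0,\ell}(M/\Theta M)$ together with the equality clauses of Lemma \ref{4.4} (at homological degree $1$) and of Proposition \ref{4.3} (at $i=0$), whereas you apply the equality clause of Proposition \ref{4.3} directly over $S$ and factor $(1-t)^n\Hilb(\FF[\Delta,\Gamma];t)=(1-t)^{n-d-1}(1-t)\sum_s h_s(\Delta,\Gamma)t^s$; this is cleaner and correctly shows that (iii) needs no injectivity hypothesis at all. The only caveat to that remark is notational: your identity expresses the answer through the coefficients of $(1-t)\sum_s h_s t^s$, which agree with the paper's $g$-numbers only for $\ell-k\leq d$ (the coefficient in degree $d+1$ is $-h_d$, which the paper's definition of $g$ does not cover); this is immaterial in the range where the lemma is used, and the paper's route avoids the issue only because it phrases everything through $\dim_\FF M^{(d+1)}$ via part (i).
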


\begin{proof}
To simplify the notation, we write $\Theta=(\theta_1,\dots,\theta_{d+1})$, $R=\FF[\Delta]$, and $M=\FF[\Delta,\Gamma]$. The Hilbert series of $M$ can be written in the form
$$\sum_{k \geq 0} (\dim_\FF M_k)t^k = \frac {\sum_{i=0}^d h_i (\Delta,\Gamma)t^i} {(1-t)^d}$$
(see \cite[III Proposition 7.1]{Stanley-96}).
Also, by the assumption, for $1\leq k\leq d$ and $\ell \leq j+1$,
$$\dim_\FF \big(M/((\theta_1,\dots,\theta_{k+1})M)\big)_\ell = \dim_\FF \big(M/((\theta_1,\dots,\theta_{k})M)\big)_\ell - \dim_\FF \big(M/((\theta_1,\dots,\theta_{k})M)\big)_{\ell-1}$$
holds. These facts then easily imply part (i) exactly as in the non-relative case.

We now turn to part (ii). By Lemma \ref{4.4}, 
\begin{align}\label{4.5-1}\sum_{k \geq 0}(-1)^k \beta_{i+k,i+\ell}^S(M)
\leq \sum_{k \geq 0} (-1)^k \beta_{i+k,i+\ell}^{S/(\Theta S)}(M/\Theta M) \quad \mbox{for all $i \geq 1$ and $\ell \leq j$}.\end{align}
Since $S/(\Theta S)$ is isomorphic to $\FF[x_1,\dots,x_{n-d-1}]$
as a ring, Proposition \ref{4.3} yields that
\begin{align}\label{4.5-2}
\sum_{k \geq 0}(-1)^k \beta_{i+k,i+\ell}^{S/(\Theta S)}(M/\Theta M)
&\leq \sum_{k \geq 0} (-1)^k \cdot \dim_\FF (M/(\Theta M))_{\ell-k} \cdot {n-d-1 \choose i+k}\\
\nonumber&= \sum_{k \geq 0} (-1)^k g_{\ell-k} (\Delta,\Gamma) {n-d-1 \choose i+k} \quad \mbox{for all $i \geq 0$ and $\ell \leq j+1$},
\end{align}
proving (ii).

Finally we prove (iii).
Observe that $\beta_{0,k}^S(M)=\dim_\FF(M/\mideal M)_k=\beta_{0,k}^{S/(\Theta S)}(M/\Theta M)$ for all $k \in \ZZ$. Since Lemma \ref{4.4} says that we have equality in \eqref{4.5-1} when $i=1$ and Proposition \ref{4.3} says that we have equality in \eqref{4.5-2} when $i=0$, it follows that
$$\sum_{k \geq 0} (-1)^k \beta_{k,\ell}^S(M)= \sum_{k\geq 0}(-1)^k \beta_{k,\ell}^{S/(\Theta S)} (M/\Theta M) = \sum_{k \geq 0}(-1)^k g_{\ell-k}(\Delta,\Gamma){n-d-1 \choose k}$$
for all $\ell \leq j+1$.
\end{proof}

We are now in a position to prove two main results of this section. We do this by applying Lemma \ref{4.5} to two combinatorial situations: in the first one, $\Delta$ is a homology ball or a homology sphere with the WLP, and in the second one, $\Delta$ is a normal pseudomanifold with boundary.

\begin{theorem}
\label{4.6}
Let $\Delta$ be an $\FF$-homology ball or an $\FF$-homology sphere of dimension $d-1$ with $V(\Delta)=[n]$.
If $\Delta$ has the WLP over $\FF$, then for all $i \geq 0$ and $\ell \leq (d-1)/2$,
$$\sum_{k \geq 0} (-1)^k \beta^S_{i+k,i+\ell}\big(\FF[\Delta,\partial \Delta]\big) \leq \sum_{k \geq 0} (-1)^k g_{\ell-k}(\Delta,\partial \Delta){n-d-1 \choose i+k}.$$
Moreover, if $\sum_{k \geq 0} (-1)^k \beta^S_{1+k,1+\ell}(\FF[\Delta]) = \sum_{k \geq 0} (-1)^k g_{\ell-k}(\Delta){n-d-1 \choose 1+k}$ for some $0 \leq \ell \leq (d-1)/2$ and if $\Delta$ is an $\field$-homology sphere, then $g_{\ell+1}(\Delta)=0$.
\end{theorem}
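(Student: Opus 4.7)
The plan is to apply Lemma \ref{4.5} to the relative complex $(\Delta, \partial\Delta)$ with $j:=\lfloor (d-1)/2\rfloor$. Once its hypothesis is verified, Lemma \ref{4.5}(iii) delivers the $i=0$ case (in fact with equality) and Lemma \ref{4.5}(ii) the $i\geq 1$ case, yielding the inequality for all $\ell\leq (d-1)/2$.

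To verify the hypothesis, I will first show that $M:=\FF[\Delta,\partial\Delta]$ is a Cohen-Macaulay $S$-module of Krull dimension $d$: in the sphere case this is Reisner's criterion applied to $\FF[\Delta]=M$, while in the ball case $\FF[\Delta,\partial\Delta]$ is, up to a shift of grading, the canonical module of the Cohen-Macaulay ring $\FF[\Delta]$, hence itself Cohen-Macaulay. Consequently, a generic linear sequence $\theta_1,\dots,\theta_d$ is $M$-regular, so $\times\theta_k$ is injective on $M/(\theta_1,\dots,\theta_{k-1})M$ in every degree for $k\leq d$. For the last form $\theta_{d+1}$, set $A:=\FF[\Delta]/(\theta_1,\dots,\theta_d)\FF[\Delta]$ and $B:=M/(\theta_1,\dots,\theta_d)M$. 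In the sphere case $A=B$ is Artinian Gorenstein with socle in degree $d$, and Poincar\'e duality converts the WLP (surjectivity of $\times\omega:A_j\to A_{j+1}$ for $j\geq \lfloor d/2\rfloor$) into injectivity of $\times\omega: A_\ell\to A_{\ell+1}$ for $\ell\leq \lceil d/2\rceil-1=\lfloor(d-1)/2\rfloor$. In the ball case $B$ is the canonical module of $A$, and the same duality (now graded Matlis duality) yields injectivity of $\times\omega:B_\ell\to B_{\ell+1}$ in the same range. Taking $\theta_{d+1}=\omega$ generic completes the verification.

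For the moreover part, assume $\Delta$ is an $\FF$-homology sphere and equality holds at $i=1$ for some $0\leq \ell\leq (d-1)/2$. Tracing the proof of Lemma \ref{4.5}(ii), the iterated reduction from $\beta^S$ to $\beta^{S/\Theta S}$ via Lemma \ref{4.4}(i) is actually a \emph{term-by-term} equality for every summand $\beta^S_{1+k,1+\ell}(M)$ in the relevant range (the case $k=0$ falls under "$i=1,\, c\leq j$" and the case $k\geq 1$ under "$i\geq 2,\, c\leq j-1$" of Lemma \ref{4.4}(i)). Thus all of the slack must come from the subsequent application of Proposition \ref{4.3}, namely from $\dim_\FF \Ker\varphi_{0,\ell+1}$. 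The Artinian module to which Proposition \ref{4.3} is applied is $A:=M/(\theta_1,\dots,\theta_{d+1})M$, and the map $\varphi_{0,\ell+1}$ unwinds to the natural surjection $A_{\ell+1}\twoheadrightarrow A_{\ell+1}/(\mideal A)_{\ell+1}$, where $\mideal$ is the irrelevant ideal of $S/(\theta_1,\dots,\theta_{d+1})S$. Because $A$ is a ring quotient generated in degree zero, $(\mideal A)_{\ell+1}=A_{\ell+1}$, so injectivity of $\varphi_{0,\ell+1}$ is equivalent to $A_{\ell+1}=0$; by Lemma \ref{4.5}(i) this is precisely $g_{\ell+1}(\Delta)=0$.

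The main obstacle I anticipate is the ball case of the injectivity of $\times\theta_{d+1}$: one must cleanly identify $\FF[\Delta,\partial\Delta]$ with the canonical module of $\FF[\Delta]$ (with the correct grading shift) and invoke graded Matlis duality so that WLP surjectivity at the top of $A$ translates into injectivity at the bottom of $B$. The sphere case and the combinatorial bookkeeping are then routine once this algebraic translation is in place.
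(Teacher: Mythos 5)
Your proposal is correct and follows essentially the same route as the paper: identify $\FF[\Delta,\partial\Delta]$ with the canonical module of $\FF[\Delta]$, use that a generic $\Theta$ is a regular sequence on it together with graded Matlis/Gorenstein duality to convert the WLP surjectivity into injectivity of $\times\theta_{d+1}$ in degrees $\leq (d-1)/2$, and then apply Lemma \ref{4.5}. For the ``moreover'' part the paper reaches the conclusion a bit more directly from Lemma \ref{4.5}(iii) combined with $\beta^S_{0,1+\ell}(\FF[\Delta])=0$ for $\ell\geq 0$, which is exactly what your unwinding of Proposition \ref{4.3} (identifying $\Ker\varphi_{0,\ell+1}$ with $A_{\ell+1}$, of dimension $g_{\ell+1}(\Delta)$) reproduces.
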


\begin{proof}
Let $R=\FF[\Delta]$ and $M=\FF[\Delta,\partial \Delta]$.
Since $\Delta$ has the WLP, there is an l.s.o.p.~$\theta_1,\dots,\theta_{d}$ of $\FF[\Delta]$ and a linear form $\theta_{d+1}$ such that
\begin{align}
\label{4-8}
\times \theta_{d+1} : (R/(\Theta R))_{d-\ell-1} \to (R/(\Theta R))_{d-\ell}
\quad \mbox{is surjective for $\ell \leq (d-1)/2$,}
\end{align}
where $\Theta=(\theta_1,\dots,\theta_{d})$. On the other hand, since $\Delta$ is a homology ball or a homology sphere,
$M=\FF[\Delta,\partial \Delta]$ is the canonical module of $R$ \cite[II Theorem 7.3]{Stanley-96}.
(Note that if $\Delta$ is a homology sphere, then $\partial \Delta=\emptyset$
and the canonical module of $R$ is $R$ itself.)
Thus $\theta_1,\dots,\theta_{d}$ is also an l.s.o.p.~of $M$
and $M/(\Theta M)(+d)$ is isomorphic to the Matlis dual of $R/(\Theta R)$
(see e.g.\ \cite[Lemma 3.6]{MY}).
The surjectivity in \eqref{4-8} then implies that
$$
\times \theta_{d+1} : \big(M/(\Theta M)\big)_{\ell}
\to \big(M/(\Theta M)\big)_{\ell+1}
$$
is injective for $\ell \leq \frac {d-1} 2$.
Since $\Theta$ is a regular sequence of $M$,
by applying Lemma \ref{4.5} to $\theta_1,\dots,\theta_{d+1}$ and $(\Delta,\partial \Delta)$, we obtain the desired inequality.

Now, suppose that $\Delta$ is an $\FF$-homology sphere and that for some $0 \leq \ell \leq (d-1)/2 $, one has $\sum_{k \geq 0} (-1)^k \beta^S_{1+k,1+\ell}(\FF[\Delta]) = \sum_{k \geq 0} (-1)^k g_{\ell-k}(\Delta){n-d-1 \choose 1+k}$. Then $g_{\ell+1}(\Delta)=0$, since according to Lemma \ref{4.5}(iii), 
$$ \beta_{0,1+\ell}^S(\FF[\Delta])-\sum_{k \geq 0} (-1)^k \beta^S_{1+k,1+\ell}(\FF[\Delta])
= g_{\ell+1}(\Delta)- \sum_{k \geq 0} (-1)^k g_{\ell-k}(\Delta){n-d-1 \choose 1+k}$$
and since $\beta_{0,1+\ell}^S(\FF[\Delta])=0$ for $\ell \ge 0$.
\end{proof}

The second main result of this section, Theorem \ref{4.9}, concerns normal pseudomanifolds.
We say that a $(d-1)$-dimensional pure simplicial complex is a \textbf{minimal $(d-1)$-cycle complex} if, for some field $\field$, there is a \emph{cycle} $\sum_{ G } \alpha_G e_G \in C_{d-1}(\Delta)$ such that (i) $\alpha_G \in \FF$ is non-zero for every facet $G \in \Delta$, and (ii) for each proper subset $\Gamma \subset \{G \in \Delta:|G|=d\}$, the sum $\sum_{G \in \Gamma} \alpha_G e_G$ is \emph{not} a cycle.
The following result was essentially proved in Fogelsanger's thesis \cite{Fogelsanger-88}.

\begin{theorem}
\label{4.7}
Let $\Delta$ be a minimal $(d-1)$-cycle complex and let $\FF$ be any infinite field.
If $d \geq 3$, then for a generic choice of linear forms $\theta_1,\dots,\theta_{d+1} \in \FF[\Delta]$,
the multiplication map
$$\times \theta_k : \big(\FF[\Delta]/((\theta_1,\dots,\theta_{k-1})\FF[\Delta]) \big)_1 \to \big(\FF[\Delta]/((\theta_1,\dots,\theta_{k-1})\FF[\Delta])\big)_2$$
is injective for $k=1,2,\dots,d+1$.
\end{theorem}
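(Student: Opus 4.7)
The plan is to reduce the statement to Fogelsanger's classical rigidity theorem, via the standard dictionary relating generic linear forms on the face ring to generic frameworks of the $1$-skeleton.

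First, I would recall this dictionary (due to Kalai~\cite{Kalai-87}). Writing $\theta_j=\sum_{i=1}^n p_{j,i}x_i$ and setting $p(v_i):=(p_{1,i},\dots,p_{d+1,i})\in\field^{d+1}$ for each vertex $v_i$, the linear map
\[
\Phi\colon R_1\otimes_\field \field^{d+1}\longrightarrow R_2,\qquad (f_1,\dots,f_{d+1})\longmapsto \sum_{j=1}^{d+1}\theta_j f_j
\]
(with $R=\field[\Delta]$) has kernel canonically identified with the space of infinitesimal motions of the framework $(V(\Delta),p)$ in $\field^{d+1}$. Under this identification, the joint injectivity of the $d+1$ successive multiplication maps $\times\theta_k$ on $\bigl(R/(\theta_1,\dots,\theta_{k-1})R\bigr)_1$ for $k=1,\dots,d+1$ is equivalent to the space of infinitesimal motions having the minimum possible dimension $\binom{d+2}{2}$ (accounting for the trivial motions), i.e., to the generic $(d+1)$-rigidity of the $1$-skeleton of $\Delta$. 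The implication ``injectivity $\Rightarrow$ minimum-dimensional motion space'' is a direct dimension count: generic linear forms cut the dimension of $(R/(\theta_1,\dots,\theta_{k-1}))_1$ by exactly one at each step whenever the multiplication is injective, and the converse follows since the dimension cannot drop by more than one per step.

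Next, I would invoke Fogelsanger's theorem~\cite{Fogelsanger-88}: for $d\geq 3$, the $1$-skeleton of every minimal $(d-1)$-cycle complex is generically $(d+1)$-rigid. Combined with the reformulation above, this immediately yields the desired injectivity.

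For completeness, I would sketch Fogelsanger's argument. The proof is by induction on $f_0(\Delta)$; the base case is the boundary of the $d$-simplex, whose $1$-skeleton is $K_{d+1}$ and is classically $(d+1)$-rigid. The inductive step rests on a structural dichotomy for minimal cycles: either $\Delta$ decomposes as a sum of two minimal cycles $\Delta_1,\Delta_2$ sharing a common $(d-1)$-face, in which case the classical gluing lemma (two generically rigid frameworks sharing at least $d+1$ vertices in general position combine to a rigid framework) applies; or $\Delta$ is \emph{prime}, in which case Fogelsanger exhibits a vertex-split that reduces $\Delta$ to a minimal $(d-1)$-cycle with strictly fewer vertices while preserving generic rigidity. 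The principal obstacle is this structural analysis of prime minimal cycles---showing that every prime minimal $(d-1)$-cycle admits an appropriate vertex-split to a smaller minimal cycle, with rigidity transferred across the operation---which constitutes the technical heart of Fogelsanger's thesis.
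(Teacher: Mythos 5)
Your overall strategy --- translate the successive injectivity statements into generic rigidity of the $1$-skeleton and then quote Fogelsanger --- is exactly the route the paper takes (the paper does not reprove Fogelsanger; it cites his thesis together with Lee's dictionary and the discussion in Novik--Swartz for characteristic independence). However, your reduction contains an off-by-one error in the rigidity dimension, and the theorem you end up invoking is false. For a $(d-1)$-dimensional complex, $d+1$ generic linear forms correspond to generic rigidity in $\field^{d}$, not $\field^{d+1}$: the kernel of your map $\Phi$ consists of assignments $f\colon V\to\field^{d+1}$ satisfying $\langle p(u),f(v)\rangle+\langle p(v),f(u)\rangle=0$ on edges \emph{and} $\langle p(v),f(v)\rangle=0$ at vertices (the $x_v^2$ terms), i.e.\ it is the homogeneous (cone/spherical) motion space, whose trivial part comes from skew-symmetric matrices and has dimension $\binom{d+1}{2}$, not $\binom{d+2}{2}$. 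Equivalently, if all $d+1$ multiplications are injective then $\dim\big(\FF[\Delta]/(\theta_1,\dots,\theta_{d+1})\big)_2=f_1-dn+\binom{d+1}{2}$, which is the generic stress count for frameworks in $\field^{d}$. So joint injectivity is equivalent (in characteristic zero, by Lee) to generic $d$-rigidity of the $1$-skeleton --- and that, not $(d+1)$-rigidity, is what Fogelsanger proved.

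The statement you quote, that the $1$-skeleton of every minimal $(d-1)$-cycle complex is generically $(d+1)$-rigid, is not true: a stacked $(d-1)$-sphere on $d+2$ vertices is a minimal $(d-1)$-cycle complex whose graph is $K_{d+2}$ minus an edge, with $\binom{d+2}{2}-1$ edges, fewer than the $(d+1)(d+2)-\binom{d+2}{2}=\binom{d+2}{2}$ edges required for generic rigidity in $\field^{d+1}$; yet Theorem 4.7 does hold for it. The same off-by-one surfaces in your sketch of the inductive step: two minimal cycles glued along a common $(d-1)$-face share only $d$ vertices, which is enough for the gluing lemma in the $d$-rigidity theory but not in dimension $d+1$. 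Finally, even after correcting the dimension, the framework dictionary (Lee) is a characteristic-zero statement, while the theorem is claimed over an arbitrary infinite field; the paper handles this by noting (with references to Novik--Swartz, Section 5) that Fogelsanger's argument itself is characteristic independent, a point your proof would need to address.
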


Fogelsanger actually proved that every minimal $(d-1)$-cycle complex has a generically $d$-rigid $1$-skeleton. In characteristic zero, Theorem \ref{4.7} is equivalent to this result of Fogelsanger by the work of Lee \cite{Lee-94}. For non-zero characteristic, the statement follows since, as was shown in \cite{Novik-Swartz-09:DS} (see the discussion and references in \cite[\S 5]{Novik-Swartz-09:DS}), the methods used in Fogelsanger's thesis \cite{Fogelsanger-88} provide a characteristic independent proof of the theorem.
Theorem \ref{4.7} leads to the following statement about relative simplicial complexes.
\begin{lemma}
\label{4.8}
Let $\Delta$ be a minimal $(d-1)$-cycle complex, $\Gamma$ a subcomplex of $\Delta$, and $\FF$ an infinite field.
If $d \geq 3$ and $f_0(\Gamma) \geq d$, then there are linear forms $\theta_1,\dots,\theta_{d+1} \in \FF[\Delta]$
such that the multiplication map
$$\times \theta_k : \big(\FF[\Delta,\Gamma]/((\theta_1,\dots,\theta_{k-1})\FF[\Delta,\Gamma])\big)_1 \to \big(\FF[\Delta,\Gamma]/((\theta_1,\dots,\theta_{k-1})\FF[\Delta,\Gamma])\big)_2$$
is injective for $k=1,2,\dots,d+1$.
\end{lemma}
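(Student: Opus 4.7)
The plan is to reduce Lemma \ref{4.8} to Fogelsanger's theorem (Theorem \ref{4.7}) on $\FF[\Delta]$ itself by exploiting the natural inclusion of modules $\FF[\Delta,\Gamma] \hookrightarrow \FF[\Delta]$. Indeed, since $\Gamma \subseteq \Delta$ one has $I_\Delta \subseteq I_\Gamma$, so $\FF[\Delta,\Gamma] = I_\Gamma/I_\Delta$ is a graded $S$-submodule of $\FF[\Delta] = S/I_\Delta$, sitting in the short exact sequence $0 \to \FF[\Delta,\Gamma] \to \FF[\Delta] \to \FF[\Gamma] \to 0$. In degree one, $\FF[\Delta,\Gamma]_1$ is spanned by the variables $x_v$ with $\{v\} \in \Delta \setminus \Gamma$, while the surjection $\FF[\Delta]_1 \twoheadrightarrow \FF[\Gamma]_1$ is the coordinate projection that kills precisely those variables. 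Moreover, since $f_0(\Gamma) \geq d \geq 3$, $\Gamma \ne \emptyset$, hence $\FF[\Delta,\Gamma]_0 = 0$ and $(\theta_1,\dots,\theta_{k-1})\FF[\Delta,\Gamma]$ is zero in degree one. So the claim reduces to showing that for $x \in \FF[\Delta,\Gamma]_1$, the relation $\theta_k x \in (\theta_1,\dots,\theta_{k-1})\FF[\Delta,\Gamma]$ forces $x = 0$.

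Next, I would choose $\theta_1,\dots,\theta_{d+1} \in \FF[\Delta]_1$ satisfying two simultaneous genericity conditions. By Theorem \ref{4.7}, the $(d+1)$-tuples for which the Fogelsanger injectivity holds on $\FF[\Delta]$ itself form a nonempty Zariski-open subset of $(\FF[\Delta]_1)^{d+1}$. Since the restriction $\FF[\Delta]_1 \twoheadrightarrow \FF[\Gamma]_1$ is surjective and $\dim_\FF \FF[\Gamma]_1 = f_0(\Gamma) \geq d$, the tuples for which the images $\bar\theta_1,\dots,\bar\theta_d \in \FF[\Gamma]_1$ are linearly independent also form a nonempty Zariski-open set. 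As $\FF$ is infinite, I can pick $\theta_1,\dots,\theta_{d+1}$ in the intersection.

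Given any such choice and any $k \in \{1,\dots,d+1\}$, I would argue as follows. Suppose $x \in \FF[\Delta,\Gamma]_1$ and $\theta_k x \in (\theta_1,\dots,\theta_{k-1})\FF[\Delta,\Gamma] \subseteq (\theta_1,\dots,\theta_{k-1})\FF[\Delta]$. Theorem \ref{4.7} then gives $x \in (\theta_1,\dots,\theta_{k-1})\FF[\Delta]_1$, so, since $\FF[\Delta]_0 = \FF$, one may write $x = c_1\theta_1 + \cdots + c_{k-1}\theta_{k-1}$ with $c_i \in \FF$. Projecting to $\FF[\Gamma]_1$ sends $x$ to $0$ (because $x$ is supported on the interior variables) and yields $\sum_{i=1}^{k-1} c_i \bar\theta_i = 0$. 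As $k-1 \leq d$ and $\bar\theta_1,\dots,\bar\theta_d$ are linearly independent in $\FF[\Gamma]_1$, all the $c_i$ vanish, so $x = 0$.

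The only place where anything could plausibly go wrong is the genericity step, and the hypothesis $f_0(\Gamma) \geq d$ is exactly what is needed to make it succeed: one needs $d$ linearly independent images $\bar\theta_1,\dots,\bar\theta_d$ in order to handle the extremal case $k = d+1$, which demands $\dim_\FF \FF[\Gamma]_1 \geq d$. Everything else is a straightforward transfer of Fogelsanger's injectivity from $\FF[\Delta]$ to its submodule $\FF[\Delta,\Gamma]$ using the exact sequence above.
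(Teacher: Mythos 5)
Your proof is correct and takes essentially the same route as the paper: both arguments choose the $\theta_i$ generically so that Theorem \ref{4.7} applies to $\FF[\Delta]$ and so that, in degree one, the span of $\theta_1,\dots,\theta_d$ meets $\FF[\Delta,\Gamma]_1$ trivially --- you phrase the latter as linear independence of the images $\bar\theta_1,\dots,\bar\theta_d$ in $\FF[\Gamma]_1$, while the paper phrases it as injectivity of the natural map $\FF[\Delta,\Gamma]_1 \to \big(\FF[\Delta]/(\theta_1,\dots,\theta_k)\FF[\Delta]\big)_1$ (an equivalent condition, since $\FF[\Delta,\Gamma]_1$ is exactly the kernel of $\FF[\Delta]_1\to\FF[\Gamma]_1$ and $f_0(\Gamma)\geq d$ gives the needed codimension). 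Your explicit computation writing $x=c_1\theta_1+\cdots+c_{k-1}\theta_{k-1}$ and projecting to $\FF[\Gamma]_1$ is just an unwound, element-wise version of the paper's commutative-diagram argument.
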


\begin{proof}
Let $R=\FF[\Delta]$ and $J=\FF[\Delta,\Gamma]$.
Then $J$ is an ideal of $R$ and by our assumptions
\begin{align}
\label{4-7}
\dim_\FF J_1 = f_0(\Delta) -f_0(\Gamma) \leq f_0(\Delta)-d = \dim_\FF R_1 -d.
\end{align}
Hence, for a generic choice of linear forms $\theta_1,\dots,\theta_{d+1} \in \FF[\Delta]$,
\begin{itemize}
\item[(a)] the sequence
$\theta_1,\dots,\theta_{d+1}$ satisfies the conclusions of Theorem \ref{4.7} for $\FF[\Delta]$, and
\item[(b)] the natural map $J_1 \to (R/(\theta_1,\dots,\theta_k)R)_1$ induced by the inclusion $J \subset R$ is injective for $k=1,2,\dots,d$. (Note that we use \eqref{4-7} to derive this property.)
\end{itemize}

Consider the following commutative diagram
\begin{eqnarray*}
\begin{array}{cccccc}
\big(J/(\theta_1,\dots,\theta_{k-1})J\big)_2 & \longrightarrow & \big(R/(\theta_1,\dots,\theta_{k-1})R\big)_2\medskip\\
\times \theta_{k} \mbox{\LARGE{$\uparrow$}}\hspace{20pt} & & \hspace{20pt} \mbox{\LARGE{$\uparrow$}}\times \theta_k \smallskip\\
J_1=\big(J/(\theta_1,\dots,\theta_{k-1})J\big)_1 & \longrightarrow & \big(R/(\theta_1,\dots,\theta_{k-1})R\big)_1.
\end{array}
\end{eqnarray*}
By properties (a) and (b), the right vertical map and the lower horizontal map are injective for $k=1,2,\dots,d+1$.
Hence the left vertical map is also injective.
\end{proof}

We are now ready to state and prove the second main result of this section.
\begin{theorem}
\label{4.9}
Let $\Delta$ be a $(d-1)$-dimensional normal pseudomanifold with boundary, where $d\geq 3$.
Suppose $V(\Delta)=[n]$. Then
$\displaystyle\beta_{i,i+1}^S (\FF[\Delta,\partial \Delta]) \leq g_1(\Delta,\partial \Delta) {n-d-1 \choose i}$ for $i \geq 0$.
\end{theorem}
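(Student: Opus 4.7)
The plan is to transfer Fogelsanger's rigidity theorem (via Lemma \ref{4.8}) to the relative pair $(\Delta, \partial\Delta)$ by embedding $\Delta$ into a minimal cycle complex, and then apply Lemma \ref{4.5} together with a minimal-degree argument on the graded Betti numbers.

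First I would form the double $\hat\Delta := \Delta_1 \cup_{\partial\Delta} \Delta_2$, gluing two copies of $\Delta$ along their common boundary, with vertex labeling chosen so that $V(\Delta_1) = V(\Delta) = [n]$ and the new ``mirror'' interior vertices of $\Delta_2$ are $\{n+1, \dots, n+r\}$, where $r$ is the number of interior vertices of $\Delta$. After reducing to the connected case (components of $\Delta$ without boundary are already minimal cycle complexes by themselves), I would verify that $\hat\Delta$ is a minimal $(d-1)$-cycle complex over $\FF_2$. A direct case analysis shows that every ridge of $\hat\Delta$ is contained in an even number of facets (the subtle case is that of a $(d-2)$-face with all vertices in $V(\partial\Delta)$ but which is interior to $\Delta$, where the count is four rather than two), so $\sum_G e_G$ is a cycle. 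Minimality then reduces to connectedness of the dual graph of $\hat\Delta$, which holds because the dual graph of each connected normal pseudomanifold is connected and $\partial\Delta\ne\emptyset$ provides a bridging ridge between $\Delta_1$ and $\Delta_2$.

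Applying Lemma \ref{4.8} to $(\hat\Delta, \Delta_2)$ (the hypothesis $f_0(\Delta_2) = n \geq d$ is automatic) yields linear forms $\theta_1, \dots, \theta_{d+1} \in \FF[\hat\Delta]$ for which multiplication by $\theta_k$ is injective in degree $1$ on each iterated quotient of $\FF[\hat\Delta, \Delta_2]$. The crucial point is the identification $\FF[\hat\Delta, \Delta_2] \cong \FF[\Delta, \partial\Delta]$ as $\FF$-vector spaces (both have a basis of monomials supported on interior faces of $\Delta$), together with the observation that every variable $x_v$ with $v \in V(\Delta_2) \setminus V(\Delta_1)$ annihilates this module. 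Letting $\bar\theta_k \in S$ denote the restriction of $\theta_k$ to the variables indexed by $V(\Delta_1)$, multiplication by $\bar\theta_k$ on $\FF[\Delta,\partial\Delta]$ then agrees with multiplication by $\theta_k$ on $\FF[\hat\Delta,\Delta_2]$, and the injectivity transfers verbatim; injectivity in degree $0$ is automatic because $\FF[\Delta,\partial\Delta]_0 = 0$. Lemma \ref{4.5}(ii) with $\ell = 1$ now gives, for $i \geq 1$,
\[
\sum_{k \geq 0}(-1)^k \beta^S_{i+k, i+1}(\FF[\Delta,\partial\Delta]) \;\leq\; g_1(\Delta,\partial\Delta)\binom{n-d-1}{i},
\]
using $g_0(\Delta,\partial\Delta) = h_0(\Delta,\partial\Delta) = 0$.

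Finally, since $\FF[\Delta,\partial\Delta]$ is generated in degree $\geq 1$ whenever $\Delta$ has an interior vertex (and the claim is vacuous if it has none), a minimal free resolution satisfies $\beta^S_{p,q} = 0$ for $q < p+1$; in particular $\beta^S_{i+k,i+1} = 0$ for all $k \geq 1$, so the alternating sum collapses to the single term $\beta^S_{i,i+1}$, giving the desired bound for $i \geq 1$. The case $i = 0$ is the direct identity $\beta^S_{0,1}(\FF[\Delta,\partial\Delta]) = f_0(\Delta,\partial\Delta) = g_1(\Delta,\partial\Delta)$. The main obstacle is the verification in the first step that $\hat\Delta$ is a minimal cycle complex — in particular the connectedness of its dual graph, which rests on the folklore fact that connected normal pseudomanifolds have connected dual graphs.
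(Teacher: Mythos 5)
Your overall strategy---embed $(\Delta,\partial\Delta)$ into a minimal $(d-1)$-cycle complex, transfer injectivity of generic linear forms via Lemma \ref{4.8}, then run Lemma \ref{4.5} with $\ell=1$ and collapse the alternating sum using $\beta^S_{i+k,i+1}=0$ and $g_{1-k}=0$ for $k\geq 1$---is the right one, and the second half of your argument is fine. The gap is in the first step: the simplicial double $\hat\Delta=\Delta_1\cup_{\partial\Delta}\Delta_2$ does not behave as you claim whenever $\Delta$ has an \emph{interior face all of whose vertices lie in} $V(\partial\Delta)$. Since faces of a simplicial complex are determined by their vertex sets, any such face is identified with its mirror image and hence becomes a face of $\Delta_2$; consequently $(\hat\Delta,\Delta_2)\neq(\Delta,\partial\Delta)$ and $\FF[\hat\Delta,\Delta_2]\not\cong\FF[\Delta,\partial\Delta]$ (the monomials supported on such faces disappear from the relative module). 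The parity count fails as well: a boundary ridge whose unique facet of $\Delta$ has all its vertices on $\partial\Delta$ lies in only \emph{one} facet of $\hat\Delta$, and your ``subtle case'' ridge can lie in three facets rather than four. A concrete counterexample with $d=3$: let $\Delta$ be the triangulated square with boundary cycle $a,b,c,d$, diagonal $ac$, and the triangle $abc$ subdivided by an interior vertex $v$, so the facets are $abv,bcv,acv,acd$. In $\hat\Delta$ the facet $acd$ and the edge $ac$ coincide with their mirrors, the edge $cd$ lies in a single facet (so $\sum_G e_G$ is not a cycle over $\FF_2$), and $\FF[\hat\Delta,\Delta_2]$ has lost the basis elements with supports $\{a,c\}$ and $\{a,c,d\}$. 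Finally, even when all parities happen to be even, your reduction of minimality to connectivity of the dual graph is only valid when every ridge lies in exactly two facets; with the four-facet ridges you allow, a proper subfamily of facets could a priori still carry a cycle, so minimality would need a separate argument.

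The paper sidesteps all of this by coning rather than doubling: it sets $\Lambda:=\Delta\cup(u\ast\partial\Delta)$ with a single \emph{new} apex $u$. Every added face contains $u$, so no identifications can occur, $(\Lambda,u\ast\partial\Delta)=(\Delta,\partial\Delta)$ on the nose, and $\Lambda$ is a minimal $(d-1)$-cycle complex; Lemma \ref{4.8} is applied to $(\Lambda,u\ast\partial\Delta)$ (note $f_0(u\ast\partial\Delta)\geq d$), the linear forms are pushed down to $\FF[\Delta]$ exactly as in your restriction step because $x_u$ annihilates $\FF[\Lambda,u\ast\partial\Delta]\cong\FF[\Delta,\partial\Delta]$, and Lemma \ref{4.5} finishes as in your last two paragraphs. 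So the fix is to replace your double by this one-vertex cone (or otherwise repair the gluing so that no interior face of $\Delta$ gets identified with a face of the second copy); your transfer and collapsing steps then go through verbatim.
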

\begin{proof} Let $u$ be any element not in $[n]$, let $\Lambda:=\Delta \cup (u*\partial \Delta)$, and let $\Gamma:=u*\partial \Delta$. Then $(\Lambda,\Gamma)=(\Delta,\partial\Delta)$ as relative simplicial complexes. Furthermore, $\Lambda$ is a minimal $(d-1)$-cycle complex and $\Gamma$ has at least $d$-vertices. Thus, by Lemma \ref{4.8}, there exist $\theta_1,\ldots,\theta_{d+1}\in \FF[\Lambda]$ such that the multiplication maps $\times \theta_k : \big(\FF[\Lambda,\Gamma]/((\theta_1,\dots,\theta_{k-1})\FF[\Lambda,\Gamma])\big)_1 \to \big(\FF[\Lambda,\Gamma]/((\theta_1,\dots,\theta_{k-1})\FF[\Lambda,\Gamma])\big)_2$
are injective for $k=1,2,\dots,d+1$. On the other hand, the variable corresponding to $u$ annihilates $\field[\Lambda,\Gamma]$, and $\FF[\Lambda,\Gamma]$ is isomorphic to $\FF[\Delta,\partial\Delta]$ as an $\FF[\Delta]$-module. Thus the natural images of $\theta_1,\ldots,\theta_{d+1}$ in $\FF[\Delta]$ provide the set of $d+1$ linear forms that satisfy the conclusions of Lemma \ref{4.8} with respect to $\FF[\Delta,\partial\Delta]$, and so applying Lemma \ref{4.5} to $(\Delta,\partial\Delta)$ completes the proof. (Note that since $\partial\Delta\neq \emptyset$, $\beta_{i+k,i+1}^S(\FF[\Delta,\partial\Delta])=0$ and $g_{1-k}(\Delta,\partial\Delta)=0$ for $k >0$. This holds since $f_{-1}(\Delta,\partial\Delta)=0$ and $\FF[\Delta,\partial\Delta]_0=0$.)
\end{proof}

\section{Manifolds whose vertex links have the WLP}
The goal of this section is to prove (a strengthening of) Theorem \ref{main2}.
We start by recalling Hochster's formula, which expresses the graded Betti numbers of Stanley-Reisner modules in terms of topological Betti numbers of simplicial complexes.

\begin{theorem}[Hochster's formula]
\label{5.1}
Let $(\Delta,\Gamma)$ be a relative simplicial complex with $V(\Delta)=[n]$.
Then
$$\beta_{i,i+j}^S (\FF[\Delta,\Gamma]) = \sum_{W \subseteq [n],\ \! |W|=i+j} \tilde b_{j-1}(\Delta_W,\Gamma_W;\FF) \quad \mbox{ for all $i\geq 0$ and $j \geq 0$}.$$
\end{theorem}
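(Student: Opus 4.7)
The plan is to prove Theorem~\ref{5.1} via the standard multigraded Koszul computation, now applied to the relative Stanley--Reisner module $\FF[\Delta,\Gamma] = I_\Gamma/I_\Delta$. Since $\FF = S/\mideal$ with $\mideal=(x_1,\ldots,x_n)$, the Koszul complex $\mathbf{K}_\bullet$ on $x_1,\ldots,x_n$ is a minimal $\ZZ^n$-graded free resolution of $\FF$ over $S$, so
\[
\Tor^S_i(\FF[\Delta,\Gamma],\FF) \;\cong\; H_i\bigl(\FF[\Delta,\Gamma]\otimes_S \mathbf{K}_\bullet\bigr)
\]
as $\ZZ^n$-graded vector spaces. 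Hence $\beta^S_{i,i+j}(\FF[\Delta,\Gamma])$ equals the sum, over multidegrees $\alpha \in \ZZ^n_{\geq 0}$ with $|\alpha|=i+j$, of the $\FF$-dimensions of the $\alpha$-graded pieces of this Tor, and it suffices to identify each such piece.

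First I would handle a squarefree multidegree $\alpha = \chi_W$ with $W \subseteq [n]$. A basis of $(\FF[\Delta,\Gamma]\otimes_S \mathbf{K}_p)_{\chi_W}$ is indexed by pairs $(\sigma,\tau)$ with $\sigma \subseteq W$, $|\sigma|=p$, $\tau = W\setminus\sigma$, and the monomial $x^{\chi_\tau}$ nonzero in $I_\Gamma/I_\Delta$ --- equivalently, $\tau$ is a face of the relative complex $(\Delta_W,\Gamma_W)$. Re-indexing by $\tau$, the Koszul differential (which sends a generator to $\sum_{i\in \sigma}\pm x_i$ times the generator with $i$ removed from $\sigma$) corresponds on the $\tau$-side to $\tau \mapsto \sum_{i\in W\setminus \tau}\pm(\tau\cup\{i\})$. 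Up to a shift of homological degree by $|W|$, this identifies $(\FF[\Delta,\Gamma]\otimes_S \mathbf{K}_\bullet)_{\chi_W}$ with the augmented relative simplicial cochain complex of $(\Delta_W,\Gamma_W)$; since over a field reduced cohomology and homology have the same dimensions, we obtain
\[
\dim_\FF \Tor^S_i(\FF[\Delta,\Gamma],\FF)_{\chi_W} \;=\; \tilde b_{|W|-i-1}(\Delta_W,\Gamma_W;\FF).
\]

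Next I would argue that non-squarefree multidegrees contribute zero. The cleanest route is to apply the short exact sequence $0 \to I_\Gamma/I_\Delta \to S/I_\Delta \to S/I_\Gamma \to 0$ together with the classical Hochster formula: the $S$-Betti numbers of both $S/I_\Delta$ and $S/I_\Gamma$ are concentrated in squarefree multidegrees, and the induced long exact sequence in $\Tor_\bullet^S(-,\FF)$ propagates this vanishing to $\FF[\Delta,\Gamma]$. (Alternatively, for a non-squarefree $\alpha$ one can write down a contracting homotopy on $(\FF[\Delta,\Gamma]\otimes_S \mathbf K_\bullet)_\alpha$ by singling out an $i$ with $\alpha_i \geq 2$, exactly as in the classical proof.)

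Summing the squarefree contribution over $W \subseteq [n]$ with $|W| = i+j$ (so that $|W|-i-1 = j-1$) then yields the desired formula. I expect the only delicate step to be matching the Koszul signs to the relative simplicial coboundary signs in the identification at multidegree $\chi_W$; this is pure bookkeeping and goes through exactly as in the non-relative case treated in Hochster's original argument.
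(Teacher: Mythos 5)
Your argument is correct and follows essentially the same route as the paper: compute $\Tor^S_\bullet(\FF[\Delta,\Gamma],\FF)$ with the Koszul complex on $x_1,\dots,x_n$, identify the strand in a squarefree multidegree $\mathbf e_W$ with the (augmented) relative simplicial cochain complex of $(\Delta_W,\Gamma_W)$ up to a homological shift, and sum over $W$ with $|W|=i+j$. The only divergence is how the concentration in squarefree multidegrees is justified: the paper quotes Yanagawa's result that $\FF[\Delta,\Gamma]$ is a squarefree module, whereas you deduce the vanishing at non-squarefree $\alpha$ from the classical (non-relative) Hochster formula via the long exact sequence of $\Tor$ attached to $0\to I_\Gamma/I_\Delta\to S/I_\Delta\to S/I_\Gamma\to 0$ (or by the usual contracting homotopy); both are valid, and your version has the mild advantage of needing only the classical statement rather than squarefree-module machinery.
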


\begin{proof}
This result is well-known in commutative algebra.
However, we sketch its proof since we could not find a reference to the relative version.

The ring $S=\FF[x_1,\ldots,x_n]$ has a natural $\ZZ^n$-grading defined by $\deg x_i=\mathbf e_i$ for all $i\in[n]$, where $\mathbf e_i$ is the $i$-th unit vector of $\ZZ^n$. For $F \subseteq [n]$, let $\mathbf e_F:=\sum_{i \in F} \mathbf e_i$ and $x_F:=\prod_{i \in F} x_i$. Let $\mathbf K_\bullet$ be the Koszul complex with respect to $x_1,\dots,x_n$ and let $M=\FF[\Delta,\Gamma]$.
Then $\Tor_i^S(M,\field)$ is isomorphic to $H_i (\mathbf K_\bullet \otimes_S M)$.
Since $M$ is a squarefree module, it follows from \cite[Corollary 2.4]{Ya} that
the graded Betti numbers of $M$ are concentrated in squarefree degrees, that is,
$$\beta_{i,k}^S(M)=\sum_{W \subseteq [n],\ \! |W|=k} \dim_\FF\Tor_i^S(M,\FF)_{\mathbf e_W}.$$
To prove the theorem, we show that the complex $(\mathbf K_\bullet \otimes_S M)_{\mathbf e_W}$ is isomorphic to the simplicial cochain complex of $(\Delta_W,\Gamma_W)$ with an appropriate shift of homological positions.
Indeed, 
$$(\mathbf K_i \otimes_S M)_{\mathbf e_W}
\cong \bigoplus_{F \subseteq [n],\ \! |F| =i}S(-\mathbf e_F)\otimes_S M_{\mathbf e_{W}-\mathbf e_F}
\cong \bigoplus_{F \subseteq W,\ \! |F|=i} S(-\mathbf e_F)\otimes_S M_{\mathbf e_{W\setminus F}}
$$
has an $\FF$-basis $\{ 1_F\otimes_S x_{W\setminus F}: W\setminus F \in (\Delta,\Gamma)\}$,
where $1_F$ is a unit element of $S(-\mathbf e_F)$.
By identifying $1_F\otimes_S x_{W\setminus F}$ with the face $W\setminus F \in (\Delta_W,\Gamma_W)$,
one can easily verify that the complex $(\mathbf K_\bullet \otimes_S M)_{\mathbf e_W}$
is isomorphic to the simplicial cochain complex of $(\Delta_W,\Gamma_W)$, and so $H_i((\mathbf K_\bullet \otimes _S M))_{\mathbf e_W} \cong H_{|W|-i-1}(\Delta_W,\Gamma_W)$ for all $i$. The statement follows.
\end{proof}

By Hochster's formula, the $\tilde \sigma$-numbers of $(\Delta,\Gamma)$ introduced in Definition \ref{def:sigma-and-mu} can be rewritten in terms of the graded Betti numbers as follows. (Here we assume that $V(\Delta)=[n]$.)
\begin{align}
\label{5-1}
\tilde \sigma_{i-1}(\Delta,\Gamma)= \frac 1 {n+1} \sum_{k=0}^n \frac 1 {{n \choose k}} \beta_{k-i,k}^S (\FF[\Delta,\Gamma]).
\end{align}
This formula and Theorem \ref{4.6} lead to the following upper bounds on the alternating sums of $\tilde \sigma$-numbers.

\begin{proposition}
\label{5.2}
Let $\Delta$ be an $\FF$-homology ball or an $\FF$-homology sphere of dimension $d-1$. If $\Delta$ has the WLP, then
$$\sum_{i=0}^j (-1)^{j-i} \tilde \sigma_{i-1}(\Delta,\partial \Delta) \leq
\frac 1 {d+2} \sum_{i=0}^j (-1)^{j-i} \frac {g_i(\Delta,\partial \Delta)} {{d+1 \choose i}}
\quad \mbox{for all $j \leq (d-1)/2$}.$$
Moreover, if equality holds for some $j\leq (d-1)/2$ and if $\Delta$ is an $\FF$-homology sphere, then $g_{j+1}(\Delta)=0$.
\end{proposition}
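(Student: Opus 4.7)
The plan is to translate both sides into sums involving graded Betti numbers of $\FF[\Delta,\partial\Delta]$ and then apply Theorem \ref{4.6} column by column in the Betti table. Setting $n = |V(\Delta)|$ and using identity \eqref{5-1}, each $\tilde\sigma_{i-1}(\Delta,\partial\Delta)$ equals $\frac{1}{n+1}\sum_m \binom{n}{m}^{-1}\beta^S_{m-i,m}(\FF[\Delta,\partial\Delta])$. I would swap the order of summation in $(n+1)\sum_{i=0}^j(-1)^{j-i}\tilde\sigma_{i-1}(\Delta,\partial\Delta)$ to obtain $\sum_{m\geq 0}\binom{n}{m}^{-1} T_m$, where $T_m := \sum_{i=0}^j(-1)^{j-i}\beta^S_{m-i,m}$. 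After a reindexing and using that $\beta^S_{a,b}=0$ for $a<0$ or $a>b$, one sees that $T_m = \sum_{k\geq 0}(-1)^k\beta^S_{(m-j)+k,(m-j)+j}$ when $m\geq j$, while for $m<j$ only the terms with $i\leq m$ survive, giving $T_m = (-1)^{j-m}\sum_{k\geq 0}(-1)^k\beta^S_{k,m}$.

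For $m \geq j$, Theorem \ref{4.6} (with $\ell=j\leq(d-1)/2$ and ``$i$'' $=m-j$) furnishes the upper bound $T_m \leq \sum_{k\geq 0}(-1)^k g_{j-k}(\Delta,\partial\Delta)\binom{n-d-1}{m-j+k}$. For $m < j$, Lemma \ref{4.5}(iii) (whose hypothesis is secured inside the proof of Theorem \ref{4.6}) yields the exact equality $T_m = (-1)^{j-m}\sum_{k\geq 0}(-1)^k g_{m-k}(\Delta,\partial\Delta)\binom{n-d-1}{k}$. Substituting and swapping the order of summation once more to group by the subscript of $g$, the contributions from both regimes merge into the single expression
\[
(n+1)\sum_{i=0}^j(-1)^{j-i}\tilde\sigma_{i-1}(\Delta,\partial\Delta) \leq \sum_{r=0}^j (-1)^{j-r}g_r(\Delta,\partial\Delta)\sum_{p\geq 0}\frac{\binom{n-d-1}{p}}{\binom{n}{p+r}}.
\]

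The inner binomial sum is evaluated via the Beta-integral representation $\frac{1}{(n+1)\binom{n}{m}} = \int_0^1 t^m(1-t)^{n-m}\,dt$: pulling $t^r(1-t)^{n-r}$ outside the sum reduces the $p$-sum to $(1-t)^{-(n-d-1)}$, so the whole expression collapses to $(n+1)\int_0^1 t^r(1-t)^{d+1-r}\,dt = \frac{n+1}{(d+2)\binom{d+1}{r}}$, which is exactly the normalization needed to produce the right-hand side of the proposition. For the equality clause, observe that the only inequalities used are those coming from Theorem \ref{4.6} applied for each $m\geq j$, all weighted by the positive factor $1/\binom{n}{m}$; hence equality in the proposition forces $T_m$ to coincide with its upper bound for every $m\geq j$. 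Taking $m=j+1$ yields precisely the equality hypothesis of the ``moreover'' clause of Theorem \ref{4.6} (with $\ell=j$, $i=1$), whose conclusion is $g_{j+1}(\Delta)=0$ whenever $\Delta$ is an $\FF$-homology sphere (in which case $\partial\Delta=\emptyset$ and $\FF[\Delta,\partial\Delta]=\FF[\Delta]$).

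The main obstacle is the bookkeeping: the interchange of summations and the separate handling of the $m<j$ and $m\geq j$ regimes must be carried out carefully enough that the two contributions unify into a clean single sum indexed by $r$, and the Beta-integral identity must produce exactly the factor $\frac{1}{(d+2)\binom{d+1}{r}}$ required for the claimed normalization.
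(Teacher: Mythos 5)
Your proof is correct and follows essentially the same route as the paper's: both rewrite the alternating $\tilde\sigma$-sum via \eqref{5-1} as a weighted sum of alternating column sums of graded Betti numbers, bound the columns with $m\geq j$ by Theorem \ref{4.6} (the columns with $m<j$ being exact equalities via Lemma \ref{4.5}(iii)), and then evaluate the resulting weight $\sum_{p}\binom{n-d-1}{p}/\bigl((n+1)\binom{n}{p+r}\bigr)$ as $\frac{1}{(d+2)\binom{d+1}{r}}$, with the equality clause extracted from the ``moreover'' part of Theorem \ref{4.6} exactly as in the paper. The only cosmetic differences are that you evaluate the binomial identity by a Beta-integral where the paper uses the combinatorial Lemma \ref{5.3}, and that your explicit handling of the $m<j$ columns and of the column $m=j+1$ in the equality case is, if anything, slightly more careful than the paper's write-up.
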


\begin{proof}
Suppose $V(\Delta)=[n]$ and fix $j\leq(d-1)/2$.
By \eqref{5-1} and Theorem \ref{4.6},
\begin{align*}
&\sum_{i=0}^j (-1)^{j-i} \tilde \sigma_{i-1}(\Delta,\partial \Delta)\\
&=
\frac 1 {n+1} \sum_{k=0}^n \frac 1 {{n \choose k}} \left\{ \sum_{i=0}^j (-1)^{j-i} \beta_{k-i,k}^S\big(\FF[\Delta,\partial \Delta]\big)\right\}\\
&=
\frac 1 {n+1} \sum_{k=0}^n \frac 1 {{n \choose k}} \left\{ \sum_{\ell\geq 0} (-1)^\ell \beta_{k-j+\ell,k}^S\big(\FF[\Delta,\partial \Delta]\big)\right\}\\
&\leq
\frac 1 {n+1} \sum_{k=0}^n \frac 1 {{n \choose k}} \left\{ \sum_{\ell\geq 0} (-1)^\ell
g_{j-\ell}(\Delta,\partial \Delta) {n-d-1 \choose k-j+\ell} \right\}\\
&=
\sum_{\ell=0}^j (-1)^\ell g_{j-\ell}(\Delta, \partial \Delta)
\left\{ \sum_{k=0}^n \frac 1 {(n+1){n \choose k}} 
{n-d-1 \choose k-j+\ell} \right\},
\end{align*}
where we use the fact that $\beta_{k-j+\ell,k}(\FF[\Delta,\partial \Delta])=0$ when $k-j+\ell >k$ for the second equality. (We also use the convention that ${a \choose b}=0$ if $b<0$.)
Also, if equality holds and if $\Delta$ is an $\FF$-homology sphere,
then $g_{j+1}(\Delta)=0$ by Theorem \ref{4.6}. The proposition then follows from the simple combinatorial identity discussed in the next lemma.
\end{proof}

\begin{lemma}
\label{5.3}
Let $n \geq d+1 \geq r \geq 0$ be integers. Then
$$\sum_{k=0}^n \frac 1 {(n+1) {n \choose k}} {n-d-1 \choose k-r} = \frac 1 {(d+2){d+1 \choose r}}.$$. 
\end{lemma}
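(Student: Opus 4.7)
The plan is to establish the identity via a standard Beta-function calculation, which converts the sum into an elementary integral. Recall that for non-negative integers $a, b$,
\[
B(a+1, b+1) = \int_0^1 t^a(1-t)^b\, dt = \frac{a!\, b!}{(a+b+1)!}.
\]
In particular, $\frac{1}{(n+1)\binom{n}{k}} = \frac{k!(n-k)!}{(n+1)!} = \int_0^1 t^k (1-t)^{n-k}\, dt$. So the first step is to rewrite the left-hand side as
\[
\int_0^1 \left( \sum_{k=0}^n \binom{n-d-1}{k-r} t^k (1-t)^{n-k}\right) dt.
\]

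Next, I would evaluate the inner sum in closed form. Substituting $j = k-r$ and factoring out $t^r (1-t)^{n-r}$ yields
\[
\sum_{k=0}^n \binom{n-d-1}{k-r} t^k (1-t)^{n-k} = t^r(1-t)^{n-r} \sum_{j \geq 0} \binom{n-d-1}{j} \left( \frac{t}{1-t} \right)^{\!j},
\]
and the binomial theorem collapses this geometric-like sum to $t^r(1-t)^{n-r}\cdot (1-t)^{-(n-d-1)} = t^r(1-t)^{d+1-r}$. Plugging back in, we obtain
\[
\int_0^1 t^r (1-t)^{d+1-r}\, dt = B(r+1, d+2-r) = \frac{r!\,(d+1-r)!}{(d+2)!} = \frac{1}{(d+2)\binom{d+1}{r}},
\]
which is precisely the right-hand side.

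I do not anticipate any real obstacle: the only thing to be careful of is the range of summation (the binomial $\binom{n-d-1}{k-r}$ vanishes outside $r \leq k \leq n-d-1+r$, so the manipulations are automatically valid under the hypothesis $n \geq d+1 \geq r \geq 0$). An alternative purely combinatorial route is to reduce the identity, after clearing factorials and substituting $j = k - r$, to the Vandermonde-type convolution $\sum_{j=0}^{n-d-1}\binom{j+r}{r}\binom{n-r-j}{d+1-r} = \binom{n+1}{d+2}$; this gives the same constant and could be substituted if one prefers a binomial-coefficient proof to the integral one.
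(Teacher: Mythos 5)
Your proof is correct, and it takes a genuinely different route from the paper's. You convert the weights $\frac{1}{(n+1)\binom{n}{k}}$ into Beta integrals $\int_0^1 t^k(1-t)^{n-k}\,dt$, collapse the inner sum by the binomial theorem to $t^r(1-t)^{d+1-r}$ (the division by $(1-t)$ is only a cosmetic issue at $t=1$, since the identity between polynomials holds on $[0,1)$ and extends by continuity), and finish with one more Beta evaluation; the hypotheses $n\geq d+1\geq r\geq 0$ enter exactly where you say they do. The paper instead argues purely combinatorially: it proves the convolution $\sum_{k=0}^n\binom{k}{r}\binom{n-k}{d+1-r}=\binom{n+1}{d+2}$ by counting $(d+2)$-subsets of $[n+1]$ according to their $(r+1)$-st smallest element, and then rearranges factorials to deduce the lemma --- which, after the substitution $k=j+r$, is precisely the ``alternative purely combinatorial route'' you sketch at the end. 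So your fallback is essentially the paper's proof, while your main argument trades the bijective counting for a short calculus computation; the integral version is arguably quicker and self-contained, whereas the paper's stays entirely within elementary enumeration, which fits its context.
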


\begin{proof} First note that
\begin{align*}
&\big\{ F \subseteq [n+1]: |F| = d+2\big\}\\
&= \biguplus_{k=0}^n \big\{ F \cup \{k+1\} \cup G \ : \ |F|=r, \ |G|=d+1-r, \
\max F<k+1<\min G\big\}.
\end{align*}
Thus,
\begin{equation} \label{set-part}
\sum_{k=0}^n {k \choose r} {n-k \choose d+1 -r} ={n+1 \choose d+2},
\end{equation}
and we obtain the desired equation from the following calculation:
\begin{align*}
& \sum_{k=0}^n \frac 1 {(n+1){n \choose k}} {n-d-1 \choose k-r}\\
&=
\frac {(d+2)} {(d+2){d+1 \choose r}} \sum_{k=0}^n \frac {k! (n-k)!} { (n+1)!} 
\frac {(n-d-1)!} {(k-r)! (n-d-1-k+r)!} \frac {(d+1)!} { r! (d+1 -r)! }\\
&=
\frac {1} {(d+2){d+1 \choose r}} \sum_{k=0}^n {k \choose r} \frac 1 {{n+1 \choose d+2}} {n-k \choose d+1 -r}\\
&=
\frac {1} {(d+2){d+1 \choose r}} \left\{\frac 1 {{n+1 \choose d+2}} 
\sum_{k=0}^n {k \choose r} {n-k \choose d+1 -r} \right\}\\
&=
\frac {1} {(d+2){d+1 \choose r}},
\end{align*}
where the last step is by equation \eqref{set-part}.
\end{proof}

We also need the following fact that can be proved in the same way as \cite[Proposition~2.3]{Swartz-04/05}. (For the boundary complexes of simplicial polytopes this result goes back to McMullen, see \cite[p.~183]{McMullen-70}.)

\begin{lemma}
\label{5.4}
Let $(\Delta,\Gamma)$ be a relative simplicial complex.
If $\Delta$ is pure of dimension $d$, then
$$\sum_{v \in V(\Delta)} g_k\big(\lk_\Delta(v),\lk_\Gamma(v)\big)=(d+2-k)g_k(\Delta,\Gamma)+(k+1)g_{k+1}(\Delta,\Gamma) \quad \mbox{for all $k\geq 0$}.$$
\end{lemma}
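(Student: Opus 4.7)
The plan is to reduce the identity to a McMullen-type relation on the $h$-vector of $(\Delta,\Gamma)$, together with the elementary double-counting identity
\[
\sum_{v \in V(\Delta)} f_{i-1}\bigl(\lk_\Delta(v),\lk_\Gamma(v)\bigr) = (i+1)\, f_i(\Delta,\Gamma) \qquad \text{for all } i \geq 0.
\]
This double-counting is immediate from the observation that a face $F$ with $v\notin F$ lies in $(\lk_\Delta(v),\lk_\Gamma(v))$ if and only if $F\cup\{v\}\in(\Delta,\Gamma)$. Thus each $i$-face $G$ of $(\Delta,\Gamma)$ is counted once on the left-hand side for each of its $i+1$ vertices, via $F=G\setminus\{v\}$.

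The main step is to establish the $h$-number identity
\[
\sum_{v\in V(\Delta)} h_k\bigl(\lk_\Delta(v),\lk_\Gamma(v)\bigr) = (k+1)\,h_{k+1}(\Delta,\Gamma) + (d+1-k)\,h_k(\Delta,\Gamma) \quad \text{for all } k\geq 0.
\]
I plug the double-counting identity into the definition \eqref{h-numbers} of the $h$-numbers of the $(d-1)$-dimensional relative link (whose $h$-numbers use the binomial $\binom{d-i}{d-k}$) to get
\[
\sum_v h_k\bigl(\lk_\Delta(v),\lk_\Gamma(v)\bigr) = \sum_{i=0}^{k}(-1)^{k-i}\binom{d-i}{d-k}(i+1)\,f_i(\Delta,\Gamma).
\]
Next I expand $(k+1)\,h_{k+1}(\Delta,\Gamma) + (d+1-k)\,h_k(\Delta,\Gamma)$ using \eqref{h-numbers} for the $d$-dimensional complex $(\Delta,\Gamma)$, reindex so both sums run over $f_j$ with $-1\leq j\leq k$, and match coefficients. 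The key collapse is
\[
(k+1)\binom{d-j}{d-k} - (d+1-k)\binom{d-j}{d+1-k} = (j+1)\binom{d-j}{d-k},
\]
which follows from the elementary identity $(d+1-k)\binom{d-j}{d+1-k}=(k-j)\binom{d-j}{d-k}$. The $f_{-1}$ contribution is killed by the factor $j+1=0$ at $j=-1$, so the resulting sum matches the expression above and the $h$-identity is proved.

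With the $h$-identity in hand, I conclude by taking differences. Setting $h_{-1}:=0$ so that $g_k=h_k-h_{k-1}$ for all $k\geq 0$, and subtracting the $h$-identity at $k-1$ from the one at $k$, I get
\[
\sum_v g_k\bigl(\lk_\Delta(v),\lk_\Gamma(v)\bigr) = (k+1)\,h_{k+1} + (d+1-2k)\,h_k - (d+2-k)\,h_{k-1},
\]
which after grouping equals $(d+2-k)\,g_k(\Delta,\Gamma) + (k+1)\,g_{k+1}(\Delta,\Gamma)$, as desired. The main obstacle is bookkeeping: one must be careful about the two different dimension parameters ($d+1$ for $(\Delta,\Gamma)$ versus $d$ for the links) and verify that the $f_{-1}$ terms cancel correctly whether or not $\Gamma$ is empty. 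Apart from this, the argument runs parallel to the standard proof of McMullen's identity in the absolute case referenced from \cite{Swartz-04/05}; the purity of $\Delta$ is used implicitly only to ensure every vertex link has the correct dimension.
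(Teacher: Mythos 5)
Your proof is correct and follows essentially the same route as the paper: the key input is the same double-counting identity $\sum_{v} f_{i-1}(\lk_\Delta(v),\lk_\Gamma(v))=(i+1)f_i(\Delta,\Gamma)$ (eq.~\eqref{linksum}), after which the paper simply defers to the ``routine computation'' of the non-relative case in \cite{Swartz-04/05}, which is exactly the binomial bookkeeping you carry out explicitly (via the $h$-number version of McMullen's identity and then differencing, with the correct use of purity to fix the dimension parameters).
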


\begin{proof}
Note that
\begin{align}
\label{linksum}
\sum_{v \in V(\Delta)} f_{i-1}\big(\lk_\Delta(v),\lk_\Gamma(v)\big)=(i+1) f_i(\Delta,\Gamma)
\quad \mbox{ for all } i \geq 0.
\end{align}
The assertion of the lemma then follows from this observation by a routine computation exactly in the same way 
as its non-relative version, see \cite[Proposition 2.3]{Swartz-04/05}.
\end{proof}

We are now in a position to prove Theorem \ref{main2}.
Since $\tilde b_0(\Delta,\Gamma)=b_0(\Delta,\Gamma)-g_0(\Delta,\Gamma)$
if $\dim\Delta \geq 0$,
the following result and the Morse inequalities of Corollary \ref{muinequality} imply Theorem \ref{main2}.
Thus the following result can be seen as a strengthening of Theorem \ref{main2}.

\begin{theorem}
\label{5.5}
Let $\Delta$ be a $d$-dimensional $\FF$-homology manifold with or without boundary.
If all vertex links of $\Delta$ have the WLP, then
$$g_r(\Delta,\partial \Delta) \geq {d+2 \choose r} \left\{ \sum_{k=1}^r (-1)^{r-k} \mu_{k-1}(\Delta,\partial \Delta) + (-1)^r g_0(\Delta,\partial \Delta)\right\}\;\; \mbox{ for all }r \leq (d+1)/2.$$
Moreover, if equality holds for some $r \leq d/2$, then the link of each interior vertex is an $(r-1)$-stacked $\field$-homology sphere.
\end{theorem}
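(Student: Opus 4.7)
The plan is to apply Proposition \ref{5.2} to each vertex link of $\Delta$ and sum the resulting inequalities over all vertices. Every link $\lk_\Delta(v)$ is a $(d-1)$-dimensional homology sphere when $v$ is an interior vertex and a $(d-1)$-dimensional homology ball when $v$ is a boundary vertex, with $\partial\lk_\Delta(v)=\lk_{\partial\Delta}(v)$. By hypothesis each link has the WLP, so Proposition \ref{5.2} applied with $j=r-1\leq (d-1)/2$ gives
\[
\sum_{i=0}^{r-1}(-1)^{r-1-i}\tilde\sigma_{i-1}\bigl(\lk_\Delta(v),\lk_{\partial\Delta}(v)\bigr)\leq \frac{1}{d+2}\sum_{i=0}^{r-1}(-1)^{r-1-i}\frac{g_i\bigl(\lk_\Delta(v),\lk_{\partial\Delta}(v)\bigr)}{\binom{d+1}{i}}
\]
for every $v\in V(\Delta)$. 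Summing over $v$, the left-hand side becomes $\sum_{i=0}^{r-1}(-1)^{r-1-i}\mu_i(\Delta,\partial\Delta)$ by Definition \ref{def:sigma-and-mu}, while Lemma \ref{5.4} (applied to the pair $(\Delta,\partial\Delta)$) rewrites $\sum_v g_i(\lk_\Delta(v),\lk_{\partial\Delta}(v))$ as $(d+2-i)g_i(\Delta,\partial\Delta)+(i+1)g_{i+1}(\Delta,\partial\Delta)$.

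Next, I would use the elementary binomial identities
\[
\frac{d+2-i}{(d+2)\binom{d+1}{i}}=\frac{1}{\binom{d+2}{i}}\qquad\text{and}\qquad \frac{i+1}{(d+2)\binom{d+1}{i}}=\frac{1}{\binom{d+2}{i+1}}
\]
to rewrite the summed right-hand side as
\[
\sum_{i=0}^{r-1}(-1)^{r-1-i}\left(\frac{g_i(\Delta,\partial\Delta)}{\binom{d+2}{i}}+\frac{g_{i+1}(\Delta,\partial\Delta)}{\binom{d+2}{i+1}}\right).
\]
After re-indexing the $g_{i+1}$ piece by $k=i+1$, the two sums telescope: all intermediate terms cancel and only $(-1)^{r-1}g_0(\Delta,\partial\Delta)+g_r(\Delta,\partial\Delta)/\binom{d+2}{r}$ survives. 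Rearranging this inequality and multiplying through by $\binom{d+2}{r}$ produces precisely the bound in Theorem \ref{5.5}.

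For the equality clause, suppose equality holds for some $r\leq d/2$. Then equality must occur vertex-by-vertex in Proposition \ref{5.2}, and in particular for every interior vertex $v$ the link $\lk_\Delta(v)$ is a $(d-1)$-dimensional $\FF$-homology sphere with the WLP attaining equality at $j=r-1$. The moreover-clause of Proposition \ref{5.2} then forces $g_r(\lk_\Delta(v))=0$, and since $r\leq d/2$, Theorem \ref{GLBTsphere} identifies $\lk_\Delta(v)$ as an $(r-1)$-stacked homology sphere, as claimed. The main obstacle is keeping the index-shifts and sign conventions straight so that the telescoping cancellation goes through cleanly; once the two binomial identities above are spotted, the argument is otherwise mechanical.
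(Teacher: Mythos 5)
Your proposal is correct and follows essentially the same route as the paper's proof: apply Proposition \ref{5.2} with $j=r-1$ to each vertex link, sum over vertices so the left side becomes the alternating sum of $\mu$-numbers, convert the right side via Lemma \ref{5.4} and the two binomial identities, and let the resulting sum telescope to $g_r(\Delta,\partial\Delta)/\binom{d+2}{r}+(-1)^{r-1}g_0(\Delta,\partial\Delta)$. Your treatment of the equality case (vertex-by-vertex equality forcing $g_r(\lk_\Delta(v))=0$ via the moreover-clause of Proposition \ref{5.2}, then Theorem \ref{GLBTsphere}) is also exactly the paper's argument.
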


\begin{proof}
The proof is similar to that of \cite[Theorem 3.6]{Bagchi-Datta-14}.
By Proposition \ref{5.2}, for a fixed $r\leq (d+1)/2$,
\begin{align*}
&\sum_{k=1}^r (-1)^{r-k} \mu_{k-1}(\Delta,\partial\Delta)\\
&= \sum_{k=1}^r \sum_{v \in V(\Delta)} (-1)^{r-k} \tilde \sigma_{k-2}\big(\lk_\Delta(v),\partial(\lk_\Delta(v))\big)\\
&\leq 
\sum_{k=1}^r (-1)^{r-k} \frac { \sum_{v \in V(\Delta)} g_{k-1} \big(\lk_\Delta(v),\partial(\lk_\Delta(v))\big)} {(d+2){d+1 \choose k-1}}\\
&\stackrel {(\star)} {=} \sum_{k=1}^r (-1)^{r-k} \frac {\{ (d+3 -k) g_{k-1}(\Delta,\partial \Delta) + k g_{k}(\Delta,\partial \Delta)\} }
{(d+2) {d+1 \choose k-1}}\\
&=\sum_{k=1}^r (-1)^{r-k} \left\{
\frac {g_{k-1}(\Delta,\partial \Delta)} {{d+2 \choose k-1}} + \frac {g_{k}(\Delta,\partial \Delta) } {{d+2 \choose k}}\right\}\\
&= \frac {g_r(\Delta,\partial \Delta)} {{d+2 \choose r}}
+(-1)^{r-1} g_0(\Delta,\partial \Delta),
\end{align*}
where we use Lemma \ref{5.4} for step ($\star$).
The equality statement (for $r\leq d/2$) now follows from Theorem~\ref{GLBTsphere} and the equality statement of Proposition \ref{5.2}.
\end{proof}

\begin{remark} \label{remark-new}
Proposition \ref{5.2} and Theorem \ref{5.5} generalize the results proved by Bagchi and Datta in \cite{Bagchi:mu-vector,Bagchi-Datta-14}. They are new not only for homology balls and homology manifolds with boundary, respectively, but also for homology spheres and homology manifolds without boundary, respectively. Indeed, these results provide partial affirmative answers to \cite[Conjectures 1, 2, and 3]{Bagchi:mu-vector}. For instance, Proposition \ref{5.2} verifies the inequality part of \cite[Conjecture 1]{Bagchi:mu-vector} in the special case of homology spheres that have the WLP.
\end{remark}

There are two known large classes of $\FF$-homology spheres that have the WLP.
One such class consists of the boundary complexes of simplicial polytopes; these spheres have the WLP over $\mathbb Q$, see \cite{Stanley-80}. The other class is that of $r$-stacked $\FF$-homology spheres of dimension $\geq 2r-1$; these complexes have the WLP over $\FF$ by a result of Swartz, see \cite[Corollary 6.3]{Swartz-14}. Therefore, as a corollary of Proposition \ref{5.2}, we obtain the following result that answers \cite[Question 3.17]{Bagchi-Datta-14}.
\begin{corollary}
Let $\Delta$ be an $r$-stacked $\FF$-homology sphere of dimension $d-1\geq 2r-1$. Then
$\sum_{i=0}^j (-1)^{j-i} \tilde \sigma_{i-1}(\Delta) \leq \frac 1 {d+2} \sum_{i=0}^j (-1)^{j-i} \frac {g_i(\Delta)} {{d+1 \choose i}}$ for all $0\leq j \leq (d-1)/2$.
\end{corollary}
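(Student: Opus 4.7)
The statement is essentially a direct application of Proposition \ref{5.2}, and the main work is already done in that proposition. My plan is to verify that all the hypotheses of Proposition \ref{5.2} are met under the assumptions of the corollary, and then to translate the conclusion.

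First I would invoke the result of Swartz cited immediately before the corollary (namely \cite[Corollary 6.3]{Swartz-14}) to conclude that any $r$-stacked $\FF$-homology sphere $\Delta$ of dimension $d-1 \geq 2r-1$ has the weak Lefschetz property over $\FF$. This is the key ingredient that turns the hypothesis of being $r$-stacked into the algebraic hypothesis required by Proposition \ref{5.2}.

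Next, I would note that since $\Delta$ is an $\FF$-homology sphere, its boundary is empty, so $(\Delta, \partial\Delta) = (\Delta, \emptyset) = \Delta$ as relative simplicial complexes. Consequently $\tilde\sigma_{i-1}(\Delta, \partial\Delta) = \tilde\sigma_{i-1}(\Delta)$ and $g_i(\Delta, \partial\Delta) = g_i(\Delta)$ for all $i$.

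With these two observations in hand, the desired inequality
\[
\sum_{i=0}^j (-1)^{j-i}\, \tilde\sigma_{i-1}(\Delta) \;\leq\; \frac{1}{d+2} \sum_{i=0}^j (-1)^{j-i} \frac{g_i(\Delta)}{\binom{d+1}{i}} \quad \text{for all } 0\leq j\leq (d-1)/2
\]
is obtained by plugging $\Delta$ into Proposition \ref{5.2}. There is no genuine obstacle here since the hard analytic and combinatorial content (the upper bounds on graded Betti numbers coming from Theorem \ref{4.6}, Hochster's formula, and the identity of Lemma \ref{5.3}) is already packaged inside Proposition \ref{5.2}. The only thing one needs to be careful about is confirming that Swartz's criterion applies in the stated dimension range $d-1 \geq 2r-1$, which matches his hypothesis exactly.
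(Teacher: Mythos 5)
Your proposal is correct and matches the paper's argument exactly: the corollary is stated there as an immediate consequence of Swartz's result (that $r$-stacked homology spheres of dimension $\geq 2r-1$ have the WLP) combined with Proposition \ref{5.2}, applied with $\partial\Delta=\emptyset$. Nothing further is needed.
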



\section{The Lower Bound Theorem for normal pseudomanifolds}

In this section, we establish Theorem \ref{main1} in its full generality. One can prove the inequality part of this theorem in the same way as the inequality of Theorem \ref{5.5}. However, we slightly change the formulation to help our discussion of the equality case. We first provide an upper bound on $\tilde \sigma_0(\Delta,\partial\Delta)$ when $\Delta$ is a normal pseudomanifold with boundary.

\begin{proposition}
\label{6.1}
Let $\Delta$ be a $(d-1)$-dimensional normal pseudomanifold with nonempty boundary, and assume $|V(\Delta)|=n$.
If $d \geq 3$, then
$${d+2 \choose 2} \tilde \sigma_0(\Delta,\partial \Delta) \leq \frac 1 2 f_0(\Delta,\partial \Delta).$$
Moreover, if ${d+2 \choose 2} \tilde \sigma_0(\Delta,\partial \Delta) = \frac 1 2 f_0(\Delta,\partial \Delta)$, then $\beta_{n-d-1,n-d}^S (\FF[\Delta,\partial \Delta])=f_0(\Delta,\partial \Delta)$.
\end{proposition}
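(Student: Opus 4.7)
My plan is to chain together three ingredients already developed in Sections 4 and 5: the identity \eqref{5-1} relating $\tilde{\sigma}$-numbers to graded Betti numbers, the upper bound on $\beta^S_{i,i+1}(\FF[\Delta,\partial\Delta])$ provided by Theorem \ref{4.9}, and the combinatorial identity of Lemma \ref{5.3}. No new topological input should be needed beyond these.

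First I will instantiate \eqref{5-1} with $i=1$ to obtain
$$\tilde{\sigma}_0(\Delta,\partial\Delta) \;=\; \frac{1}{n+1}\sum_{k=0}^{n} \frac{1}{\binom{n}{k}}\,\beta_{k-1,k}^S\big(\FF[\Delta,\partial\Delta]\big).$$
Next, I will bound each summand via Theorem \ref{4.9} (observing that only terms with $1\leq k\leq n-d$ contribute, since $\binom{n-d-1}{k-1}=0$ otherwise), giving
$$\tilde{\sigma}_0(\Delta,\partial\Delta) \;\leq\; \frac{g_1(\Delta,\partial\Delta)}{n+1}\sum_{k=0}^{n} \frac{1}{\binom{n}{k}}\binom{n-d-1}{k-1}.$$
Then Lemma \ref{5.3} with $r=1$ collapses the sum to $\frac{1}{(d+2)(d+1)}$. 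Since $\partial\Delta\neq\emptyset$ implies $f_{-1}(\Delta,\partial\Delta)=0$, we have $g_1(\Delta,\partial\Delta)=h_1(\Delta,\partial\Delta)=f_0(\Delta,\partial\Delta)$. Multiplying through by $\binom{d+2}{2}=\tfrac{(d+2)(d+1)}{2}$ yields the stated inequality.

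For the equality clause I will observe that the above chain contains exactly one inequality, applied termwise across $k$; hence if the overall equality holds, it must hold term-by-term for every $k$ with $\binom{n-d-1}{k-1}>0$. Specializing to $k=n-d$, where $\binom{n-d-1}{n-d-1}=1$, forces $\beta_{n-d-1,n-d}^S\big(\FF[\Delta,\partial\Delta]\big)=g_1(\Delta,\partial\Delta)=f_0(\Delta,\partial\Delta)$, as claimed. I expect no significant obstacle: this proposition is essentially a direct synthesis of the previously established algebraic machinery. The only detail requiring care is the bookkeeping ensuring that the equality case really does pin down the single Betti number in top homological position rather than merely some weighted average of them.
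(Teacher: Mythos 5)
Your argument is correct and is essentially the paper's own proof: both start from identity \eqref{5-1}, bound each summand $\beta^S_{k-1,k}(\FF[\Delta,\partial\Delta])$ termwise by Theorem \ref{4.9}, evaluate the resulting sum with Lemma \ref{5.3} (with $r=1$), and use $g_1(\Delta,\partial\Delta)=f_0(\Delta,\partial\Delta)$. Your treatment of the equality case --- termwise equality forced at $k=n-d$, where the binomial coefficient is $1$ --- is exactly the (tersely stated) reasoning behind the paper's claim that equality in $(*)$ gives $\beta^S_{n-d-1,n-d}(\FF[\Delta,\partial\Delta])=g_1(\Delta,\partial\Delta)$.
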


\begin{proof}
By \eqref{5-1} and Theorem \ref{4.9},
$$\tilde \sigma_0(\Delta,\partial \Delta)= \frac 1 {n+1} \sum_{k=0}^n \frac 1 {{n \choose k}} \beta^S_{k-1,k}(\FF[\Delta,\partial \Delta]) \stackrel{(*)}{\leq} g_1(\Delta,\partial \Delta) \sum_{k=0}^n \left\{ \frac 1 {(n+1) {n \choose k}} {n-d-1 \choose k-1} \right\},$$ and if $(*)$ holds as equality, then $\beta_{n-d-1,n-d}^S (\FF[\Delta,\partial \Delta])=g_1(\Delta,\partial \Delta)$.
Since $g_1(\Delta,\partial \Delta)=f_0(\Delta,\partial \Delta)$ and since $\sum_{k=0}^n \frac 1 {(n+1){n \choose k}} {n-d-1 \choose k-1}= \frac 1 {2 {d+2 \choose 2}}$ (see Lemma \ref{5.3}), the above inequality yields the desired statement.
\end{proof}

The next result essentially appeared in the proof of \cite[Theorem 5.3]{Murai-15}.

\begin{lemma}
\label{6.2}
Let $\Delta$ be a $(d-1)$-dimensional normal pseudomanifold without boundary.
If $d \geq 3$, then
$${d+2 \choose 2} \big(\tilde \sigma_0(\Delta) - \tilde \sigma_{-1}(\Delta)\big)
\leq \frac 1 2 f_0(\Delta) - (d+1).$$
Moreover, ${d+2 \choose 2} \big(\tilde \sigma_0(\Delta) - \tilde \sigma_{-1}(\Delta)\big)
= \frac 1 2 f_0(\Delta) - (d+1)$ if and only if $\Delta$ is a stacked sphere.
\end{lemma}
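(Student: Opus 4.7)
I follow the template of Proposition~\ref{6.1}, working with $\Gamma=\emptyset$ and applying Theorem~\ref{4.7} (Fogelsanger) directly to $\Delta$ rather than through the relative extension Lemma~\ref{4.8}. Assume first $\Delta$ is connected, so that $\Delta$ is itself a minimal $(d-1)$-cycle complex. Theorem~\ref{4.7} then yields generic linear forms $\theta_1,\dots,\theta_{d+1}\in\FF[\Delta]$ for which $\times\theta_k$ is injective in degree~$1$ on $\FF[\Delta]/(\theta_1,\dots,\theta_{k-1})\FF[\Delta]$ for every $k$. Applying Lemma~\ref{4.5}(ii) with $\Gamma=\emptyset$ and $j=\ell=1$, and observing via Hochster's formula (Theorem~\ref{5.1}) that $\beta^S_{i+k,i+1}(\FF[\Delta])=0$ for all $k\geq 1$ when $i\geq 1$ (since those indices compute $\tilde b_{-k}$ of induced subcomplexes on at least two vertices), the alternating sum collapses to the pointwise bound
$$
\beta^S_{i,i+1}(\FF[\Delta]) \leq g_1(\Delta)\binom{n-d-1}{i} - \binom{n-d-1}{i+1},\qquad i\geq 1,
$$
where $n=f_0(\Delta)$. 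Hochster's formula also yields $\tilde\sigma_{-1}(\Delta)=1/(n+1)$ and $\beta^S_{0,1}(\FF[\Delta])=0$.

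Combining these ingredients with identity~\eqref{5-1} gives $(n+1)\bigl(\tilde\sigma_0(\Delta)-\tilde\sigma_{-1}(\Delta)\bigr)=\sum_{k\geq 2}\beta^S_{k-1,k}(\FF[\Delta])/\binom{n}{k}-1$. Substituting the Betti bound and evaluating the resulting sums via Lemma~\ref{5.3} applied at $r=0$ and $r=1$ (yielding $\sum_k\binom{n-d-1}{k}/\binom{n}{k}=(n+1)/(d+2)$ and $\sum_k\binom{n-d-1}{k-1}/\binom{n}{k}=(n+1)/((d+1)(d+2))$, respectively), everything telescopes to $\frac{(n+1)(n-2d-2)}{(d+1)(d+2)}$, which rearranges exactly to $\binom{d+2}{2}\bigl(\tilde\sigma_0-\tilde\sigma_{-1}\bigr)\leq\tfrac{1}{2}f_0(\Delta)-(d+1)$. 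The disconnected case can be handled by applying the connected bound to each component and tracking the cross-component $\tilde b_0$ contributions to $\tilde\sigma_0$; these contributions render the inequality strict, consistent with the equality characterization since stacked spheres are connected.

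For the equality characterization, the ``if'' direction follows by direct computation from the known minimal free resolution of the Stanley-Reisner ring of a stacked sphere, which realizes the pointwise Betti bound at every $i\geq 1$. Conversely, equality in the lemma forces that bound to be tight for \emph{every} $i\geq 1$, since the weights $1/\binom{n}{i+1}$ are strictly positive. Tightness at $i=1$ gives $\beta^S_{1,2}(\FF[\Delta])=\binom{n-d}{2}$, equivalent to $g_2(\Delta)=0$; hence for $\dim\Delta\geq 3$, Theorem~\ref{LBT-closed} forces each component of $\Delta$ to be a stacked manifold with $\tilde b_1=\tilde b_0$, while tightness at $i\geq 2$ pins the remaining Betti numbers to those of a stacked sphere, forcing connectivity and $\tilde H_1(\Delta;\FF)=0$. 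The case $d=3$ is handled separately: every triangulated closed surface has $g_2=0$, but only stacked $2$-spheres achieve tightness of $\beta^S_{i,i+1}$ for $i\geq 2$ (the octahedron, for example, has $\beta^S_{2,3}=0$, strictly below the bound of $2$). The main obstacle is this equality case---specifically, extracting that $\Delta$ is a \emph{sphere} (not merely a stacked pseudomanifold) from tightness at $i\geq 2$, uniformly across all $d\geq 3$; this rigidity step is essentially the content extracted from the relevant portion of \cite[Theorem~5.3]{Murai-15}.
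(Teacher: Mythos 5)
Your derivation of the inequality in the connected case is correct, and it is a genuinely different route from the paper's: the paper disposes of Lemma \ref{6.2} in two lines by quoting \cite[Corollary 5.8]{Murai-15} (which gives $\binom{d+2}{2}\tilde\sigma_0(\Delta)\leq\frac{1}{n+1}\binom{n-d}{2}$ together with its equality case) and subtracting $\tilde\sigma_{-1}(\Delta)=\frac{1}{n+1}$, whereas you in effect re-prove the inequality half of that corollary from the paper's own machinery (Theorem \ref{4.7}, Lemma \ref{4.5}(ii), Hochster's formula \ref{5.1}, identity \eqref{5-1}, Lemma \ref{5.3}), in exact parallel with Proposition \ref{6.1}. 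I checked the telescoping: with the $k=0,1$ terms of Lemma \ref{5.3} removed, the bound does come out to $\frac{(n+1)(n-2d-2)}{(d+1)(d+2)}$, and your pointwise bound equals $i\binom{n-d}{i+1}$, so the ``if'' direction via the known resolution of a stacked sphere (Terai--Hibi) is also fine. However, two genuine gaps remain. First, your treatment of the disconnected case is wrong: the cross-component $\tilde b_0$ contributions \emph{increase} $\tilde\sigma_0$, i.e.\ they work against the inequality, and in fact the stated inequality fails for disconnected complexes. For two disjoint copies of the boundary of a $d$-simplex (so $n=2d+2$) one computes $\tilde\sigma_0(\Delta)-\tilde\sigma_{-1}(\Delta)=\frac{d}{d+2}$, so the left-hand side is $\frac{d(d+1)}{2}>0$ while the right-hand side is $0$; moreover Theorem \ref{4.7} does not even apply, since a disconnected pseudomanifold is not a minimal cycle complex. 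Connectivity must therefore be assumed (as in \cite{Murai-15}, and as it automatically holds where Lemma \ref{6.2} is used, namely for vertex links in Theorem \ref{6.3}), not ``handled'' with a strict inequality.

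Second, the ``only if'' direction is not established. Tightness at $i=1$ does give $g_2(\Delta)=0$, and for $\dim\Delta\geq 3$ you could finish immediately with the classical Lower Bound Theorem equality case quoted in the introduction ($g_2(\Delta)=0$ iff $\Delta$ is a stacked sphere, for connected normal pseudomanifolds of dimension $\geq 3$), rather than the detour through Theorem \ref{LBT-closed} and stacked manifolds. But for $d=3$ your argument breaks down: the claim that ``every triangulated closed surface has $g_2=0$'' is false (for a closed surface $g_2=3(2-\chi)$, so $g_2=0$ exactly for $2$-spheres), and the key assertion that tightness of $\beta_{i,i+1}$ for $i\geq 2$ ``pins the remaining Betti numbers to those of a stacked sphere, forcing connectivity and $\tilde H_1(\Delta;\FF)=0$'' is never justified --- the linear strand does not control $\tilde b_1(\Delta)$, which by Hochster's formula sits in $\beta_{n-2,n}$, and you offer only the octahedron as evidence that non-stacked $2$-spheres miss the bound. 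You acknowledge this yourself and defer the missing rigidity step to \cite{Murai-15}; but that step is precisely the content of \cite[Corollary 5.8]{Murai-15} on which the paper's proof rests, so the equality characterization is assumed rather than proved.
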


\begin{proof}
By \cite[Corollary 5.8]{Murai-15},
$${d+2 \choose 2} \tilde \sigma_0(\Delta) \leq \frac 1 {f_0(\Delta)+1} {f_0(\Delta)-d \choose 2},$$
and equality holds if and only if $\Delta$ is a stacked sphere.
Since $\tilde \sigma_{-1}(\Delta)=\frac 1 {f_0(\Delta)+1}$, it follows that
$${d+2 \choose 2} \big(\tilde \sigma_0(\Delta) -\tilde \sigma_{-1}(\Delta) \big) \leq \frac 1 {f_0(\Delta)+1} \left\{ {f_0(\Delta)-d \choose 2}-{d+2 \choose 2} \right\} = \frac 1 2 f_0(\Delta)-(d+1).$$
This proves the desired statement.
\end{proof}

We are now ready to verify the inequality part of Theorem \ref{main1}. In fact, we prove the following stronger statement. (It implies the inequality part of Theorem \ref{main1} by Corollary \ref{muinequality}.)

\begin{theorem}
\label{6.3}
Let $\Delta$ be a $d$-dimensional normal pseudomanifold with nonempty boundary.
If $d \geq 3$, then
$$g_2(\Delta,\partial \Delta) \geq {d+2 \choose 2} \big(\mu_1(\Delta,\partial \Delta;\FF)-\mu_0(\Delta,\partial \Delta;\FF)\big).$$
Furthermore, $g_2(\Delta,\partial \Delta) = {d+2 \choose 2} \big(\mu_1(\Delta,\partial \Delta;\FF)-\mu_0(\Delta,\partial \Delta;\FF)\big)$ if and only if for every boundary vertex $v$, ${d+2 \choose 2} \tilde \sigma_0\big(\lk_\Delta(v),\partial \lk_\Delta(v)\big) = \frac 1 2 f_0\big(\lk_\Delta(v),\partial \lk_\Delta(v)\big)$, and for every interior vertex $v$, the link of $v$ is a stacked sphere.
\end{theorem}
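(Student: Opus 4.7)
The plan is to reduce Theorem~\ref{6.3} to the vertex-local bounds of Proposition~\ref{6.1} and Lemma~\ref{6.2} and then reassemble them via Lemma~\ref{5.4}. First, I would split $V(\Delta)=V_{\mathrm{int}}\sqcup V_\partial$ and record the following standard consequences of the normality axioms: if $v\in V_{\mathrm{int}}$ then $\lk_\Delta(v)$ is a $(d-1)$-dimensional normal pseudomanifold without boundary (every ridge of $\Delta$ through $v$ is non-boundary and hence meets two facets), while if $v\in V_\partial$ then $\lk_\Delta(v)$ is a $(d-1)$-dimensional normal pseudomanifold with boundary $\lk_{\partial\Delta}(v)\neq\emptyset$. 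Since $\tilde\sigma_{-1}(\Xi,\Omega)=0$ whenever $\Omega\neq\emptyset$, Definition~\ref{def:sigma-and-mu} gives
\[
\mu_1(\Delta,\partial\Delta)-\mu_0(\Delta,\partial\Delta)=\sum_{v\in V_\partial}\tilde\sigma_0\bigl(\lk_\Delta(v),\partial\lk_\Delta(v)\bigr)+\sum_{v\in V_{\mathrm{int}}}\bigl(\tilde\sigma_0(\lk_\Delta(v))-\tilde\sigma_{-1}(\lk_\Delta(v))\bigr).
\]

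Next, I would apply Proposition~\ref{6.1} to each summand of the first sum and Lemma~\ref{6.2} to each summand of the second sum (both are legal, since the links have dimension $d-1\geq 2$, matching the hypotheses of those statements). After multiplying by $\binom{d+2}{2}$ this yields
\[
\binom{d+2}{2}(\mu_1-\mu_0)\leq\sum_{v\in V_\partial}\tfrac12 f_0\bigl(\lk_\Delta(v),\partial\lk_\Delta(v)\bigr)+\sum_{v\in V_{\mathrm{int}}}\Bigl(\tfrac12 f_0(\lk_\Delta(v))-(d+1)\Bigr).
\]
Using $g_1\bigl(\lk_\Delta(v),\partial\lk_\Delta(v)\bigr)=f_0\bigl(\lk_\Delta(v),\partial\lk_\Delta(v)\bigr)$ for $v\in V_\partial$ and $g_1(\lk_\Delta(v))=f_0(\lk_\Delta(v))-d-1$ for $v\in V_{\mathrm{int}}$, the right-hand side rewrites as $\tfrac12\sum_{v\in V(\Delta)} g_1\bigl(\lk_\Delta(v),\lk_{\partial\Delta}(v)\bigr)-\tfrac{d+1}{2}|V_{\mathrm{int}}|$. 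Lemma~\ref{5.4} with $k=1$ evaluates the surviving sum as $(d+1)g_1(\Delta,\partial\Delta)+2g_2(\Delta,\partial\Delta)$, and since $g_1(\Delta,\partial\Delta)=f_0(\Delta,\partial\Delta)=|V_{\mathrm{int}}|$ the $(d+1)|V_{\mathrm{int}}|$ contributions cancel, leaving $g_2(\Delta,\partial\Delta)$. This proves the inequality.

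For the equality statement, the chain above is tight precisely when Proposition~\ref{6.1} holds as equality at every boundary vertex and Lemma~\ref{6.2} holds as equality at every interior vertex. The former is the stated combinatorial condition, and the latter, by the equality clause of Lemma~\ref{6.2}, is equivalent to $\lk_\Delta(v)$ being a stacked sphere. Conversely, if both local conditions hold at every vertex, running the same computation gives global equality, so the characterization is an ``if and only if''. I do not anticipate a serious obstacle; the only preliminary that deserves a moment's care is the verification that links of interior vertices are closed normal pseudomanifolds, which follows directly from the definition of $\partial\Delta$.
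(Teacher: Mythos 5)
Your proposal is correct and follows essentially the same route as the paper: decompose $\mu_1(\Delta,\partial\Delta)-\mu_0(\Delta,\partial\Delta)$ vertex by vertex, apply Proposition~\ref{6.1} at boundary vertices and Lemma~\ref{6.2} at interior vertices, and sum, with the equality characterization falling out of the local equality clauses. The only cosmetic difference is that you finish the bookkeeping via Lemma~\ref{5.4} with $k=1$, whereas the paper invokes the identity \eqref{linksum} directly and then recognizes $f_1(\Delta,\partial\Delta)-(d+1)f_0(\Delta,\partial\Delta)=g_2(\Delta,\partial\Delta)$; since Lemma~\ref{5.4} is itself proved from \eqref{linksum}, the two computations are equivalent.
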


\begin{proof}
Observe that $\tilde \sigma_{-1}(\lk_\Delta(v),\lk_{\partial \Delta}(v))=f_{-1}(\lk_\Delta(v),\lk_{\partial \Delta}(v))=0$ if $\{ v\} \in \partial \Delta$.
Then, by Proposition \ref{6.1} and Lemma \ref{6.2}, 
\begin{align*}
&{d+2 \choose 2} \big\{\mu_1(\Delta,\partial \Delta)-\mu_0(\Delta,\partial \Delta)\big\}\\
&= {d+2 \choose 2} \sum_{v \in V(\Delta)} \left\{\tilde \sigma_0\big(\lk_\Delta(v),\lk_{\partial \Delta}(v)\big)-\tilde \sigma_{-1}\big(\lk_\Delta(v),\lk_{\partial \Delta}(v)\big)\right\}\\
& \leq \sum_{v \in V(\Delta)} \left\{ \frac 1 2 f_0\big(\lk_\Delta(v),\lk_{\partial \Delta}(v)\big) -(d+1) f_{-1}\big(\lk_\Delta(v),\lk_{\partial \Delta}(v)\big) \right\}\\
&= f_1(\Delta,\partial \Delta)-(d+1)f_0(\Delta,\partial \Delta) \qquad & {\mbox{(by \eqref{linksum})}}\\
&= g_2(\Delta,\partial \Delta),
\end{align*}
as desired.\end{proof}


In the rest of this section we treat the case of equality in Theorem \ref{main1} when $\Delta$ is an $\FF$-homology manifold with boundary. According to Theorem \ref{6.3}, this requires analyzing homology $(d-1)$-balls $B$ that satisfy ${d+2 \choose 2} \tilde \sigma_0\big(B,\partial B\big) = \frac 1 2 f_0\big(B,\partial B\big)$. To this end, we have: 

\begin{proposition} \label{missing-faces}
Let $B$ be a $(d-1)$-dimensional $\FF$-homology ball, where $d\geq 3$. Then
$$2{d+2 \choose 2} \tilde \sigma_0\big(B,\partial B\big) = f_0\big(B,\partial B\big)$$
if and only if $B$ can be written as $B=T\#\Sm_1\#\Sm_2\#\cdots\#\Sm_m$, where $T$ is a $(d-1)$-dimensional $\FF$-homology ball that has no interior vertices, $m=f_0(B,\partial B)$, and each $\Sm_i$ is the boundary complex of a $d$-dimensional simplex.
\end{proposition}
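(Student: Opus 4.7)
The plan is to translate the numerical hypothesis into a combinatorial identity using the techniques of Sections 4 and 5, and then argue by induction on $m=f_0(B,\partial B)$. Hochster's formula (Theorem \ref{5.1}) gives
\[
\beta^S_{n-d-1,n-d}(\FF[B,\partial B])=\sum_{|W|=n-d}\tilde b_0(B_W,\partial B_W;\FF).
\]
Combining this with the Alexander-type duality used in the proof of Proposition \ref{balls-duality} (namely $\tilde b_0(B_W,\partial B_W)=\tilde b_{d-2}(B_{V\setminus W})$), and noting that on $d$ vertices the induced subcomplex $B_{W'}$ lies in the $(d-1)$-simplex on $W'$ and therefore has $\tilde b_{d-2}(B_{W'})=1$ exactly when $W'$ is a minimal non-face of $B$ of size $d$ (a \emph{missing facet}) and $0$ otherwise, we conclude that $\beta^S_{n-d-1,n-d}(\FF[B,\partial B])$ equals the number of missing facets of $B$. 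Via the equality part of Proposition \ref{6.1}, the hypothesis then becomes
\[
\#\{\text{missing facets of }B\}=f_0(B,\partial B)=m.
\]

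For the sufficiency direction, suppose $B=T\#\Sm_1\#\cdots\#\Sm_m$ with $T$ a homology $(d-1)$-ball having no interior vertices. Applying Proposition \ref{6.1} to $T$ forces $\tilde\sigma_0(T,\partial T)=0$, and hence the reformulation above shows that $T$ also has no missing facets. A direct analysis of the connected-sum operation shows that each $\Sm_i$ contributes exactly one new interior vertex (the apex of the $d$-simplex whose boundary is $\Sm_i$, whose link in the new complex is the boundary of the $(d-1)$-simplex on the glued facet) and exactly one new missing facet (the identified-and-removed common facet, whose proper subfaces are retained). Iterating, an inductive bookkeeping along the connected-sum tower shows that each graded Betti number $\beta^S_{k-1,k}(\FF[B,\partial B])$ attains the Theorem \ref{4.9} upper bound $g_1(B,\partial B)\binom{n-d-1}{k-1}$ for every $k$, so summing through the weighted identity in the proof of Proposition \ref{6.1} delivers the desired equality $2\binom{d+2}{2}\tilde\sigma_0(B,\partial B)=f_0(B,\partial B)$.

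For the necessity direction, we induct on $m$. When $m=0$, the identity above forces $B$ to have no missing facets, so we take $T=B$. When $m\geq1$, the crucial claim is that $B$ possesses an interior vertex $v$ with $\lk_B(v)=\partial\sigma^{d-1}$. Granted this, setting $F_v:=V(\lk_B v)$, the vanishing $\tilde H_{d-1}(B;\FF)=0$ rules out $F_v$ being a face of $B$ (otherwise the sphere $\partial\sigma^d$ on $\{v\}\cup F_v$ would embed in $B$ as a genuine $(d-1)$-sphere subcomplex), so $F_v$ is a missing facet of $B$. Let $B':=(B\setminus v)\cup\{F_v\}$; then $B'$ is a homology $(d-1)$-ball with $f_0(B',\partial B')=m-1$ satisfying $B=B'\#\Sm$ where $\Sm=\partial\sigma^d$ on $\{v\}\cup F_v$. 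One checks that both sides of the asserted equality decrease by the same amount on passing from $B$ to $B'$, so $B'$ inherits the hypothesis and the inductive hypothesis supplies the decomposition $B'=T\#\Sm_1\#\cdots\#\Sm_{m-1}$.

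The main obstacle is locating the simple interior vertex $v$ in the inductive step. The plan is to exploit the full chain of tight bounds $\beta^S_{k-1,k}(\FF[B,\partial B])=g_1(B,\partial B)\binom{n-d-1}{k-1}$ at every homological degree $k$ that equality in Proposition \ref{6.1} yields. The natural map $v\mapsto V(\lk_B v)$, defined on interior vertices with simplex-boundary link, is injective: if $v_1\neq v_2$ share an image $F$, the union $(v_1*\partial F)\cup(v_2*\partial F)$ is a $(d-1)$-sphere subcomplex of $B$, again violating $\tilde H_{d-1}(B;\FF)=0$. A counting argument combining $f_0(\lk_B v)\geq d$ for every interior $v$ (with equality iff $\lk_B v=\partial\sigma^{d-1}$) with the higher-degree Betti identities must then be invoked to force this injection to be surjective, guaranteeing a simple apex $v$ whenever $m\geq1$.
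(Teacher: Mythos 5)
Your reduction of the hypothesis to ``the number of missing $(d-1)$-faces of $B$ equals $f_0(B,\partial B)$'' (via Hochster's formula, the Alexander-duality identity \eqref{duality}, and the equality clause of Proposition \ref{6.1}) is exactly the paper's Lemma \ref{balls-duality-cor}, and your treatment of the sufficiency direction is workable in outline: the claim that each $\beta^S_{k-1,k}(\FF[B,\partial B])$ attains the bound $m\binom{n-d-1}{k-1}$ is in fact true and can be proved by the identity $\tilde b_0\big(B_{W\cup\{x\}},(\partial B)_{W\cup\{x\}}\big)=\tilde b_0\big(B'_W,(\partial B')_W\big)+[W\cap G=\emptyset]$ under a single facet subdivision plus Pascal's rule; but as written this ``inductive bookkeeping'' is only asserted, whereas the paper gets the same conclusion more cheaply by tracking only the weighted sum $\tilde\sigma_0$ through Lemmas \ref{1.3} and \ref{5.3}.

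The genuine gap is in the necessity direction, and you flag it yourself: the existence, whenever $m\geq 1$, of an interior vertex $v$ with $\lk_B(v)=\partial\sigma^{d-1}$ is never established, and the repair you sketch cannot work. You propose to show that the injection $v\mapsto V(\lk_B(v))$, from simple interior vertices to missing $(d-1)$-faces, is surjective; but surjectivity is false even for balls of the target form. Take $d=3$, $T=\overline{\{1,2,3\}}$, stack once to create apex $x$, then stack again on the facet $\{1,2,x\}$ to create apex $y$: the missing $2$-faces are $\{1,2,3\}$ and $\{1,2,x\}$, yet the only interior vertex whose link is a simplex boundary is $y$ (the link of $x$ is a $4$-cycle), so $\{1,2,3\}$ is not in the image. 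What you actually need is merely that the set of simple interior vertices is nonempty, but deriving even this directly from the numerical hypothesis is essentially the structure theorem you are trying to prove, and no counting argument is supplied. The paper avoids the issue entirely: it never looks for a simple vertex, but instead picks a missing $(d-1)$-face $G$ with $\partial\overline{G}\neq\partial B$ (which exists once $m\geq 2$, or $m=1$ and $f_0(\partial B)>d$), passes to the homology sphere $\Lambda=B\cup(u\ast\partial B)$, cuts along $G$ in Walkup's sense to write $\Lambda=\Lambda_1\#_\phi\Lambda_2$ with both pieces homology spheres (not necessarily boundaries of simplices at this stage), and runs the induction on the pair $(m,n)$ for the two resulting balls, using Lemma \ref{balls-duality-cor} again to see that each piece inherits ``missing facets $=$ interior vertices.'' If you want to keep your peel-off-one-$\Sm_i$ strategy, you would have to prove the simple-vertex existence statement by some such cutting argument anyway, at which point you are reproducing the paper's induction; as it stands, the inductive step has no proof.
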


Proposition \ref{missing-faces} combined with Theorem \ref{6.3} and Corollary \ref{muinequality} implies the following criterion that, in particular, completes the proof of Theorem \ref{main1}.

\begin{corollary} \label{mu-equal-charact}
Let $\Delta$ be an $\FF$-homology $d$-manifold with boundary.
Then $$g_2(\Delta,\partial \Delta) = {d+2 \choose 2} \big(\mu_1(\Delta,\partial \Delta;\FF)-\mu_0(\Delta,\partial \Delta;\FF)\big)$$ if and only if $\Delta$ satisfies the following property (L):
\begin{enumerate}
\item[] the link of each interior vertex of $\Delta$ is a stacked sphere, and the link of each boundary vertex of $\Delta$ is obtained from an $\FF$-homology ball that has no interior vertices by forming connected sums with the boundary complexes of simplices.
\end{enumerate}
Moreover, if $g_2(\Delta,\partial \Delta) = {d+2 \choose 2} \big(\tilde{b}_1(\Delta,\partial \Delta;\FF)-\tilde{b}_0(\Delta,\partial \Delta;\FF)\big)$ then $\Delta$ satisfies property (L).
\end{corollary}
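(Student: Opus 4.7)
The plan is to assemble Corollary~\ref{mu-equal-charact} from three ingredients already in place: Theorem~\ref{6.3}, Proposition~\ref{missing-faces}, and the Morse-type inequality of Corollary~\ref{muinequality}. First I would invoke Theorem~\ref{6.3} to rewrite the equality $g_2(\Delta,\partial\Delta)=\binom{d+2}{2}(\mu_1-\mu_0)$ as the conjunction of two local conditions on vertex links of $\Delta$: (a) each interior vertex of $\Delta$ has a stacked link, and (b) each boundary vertex $v$ satisfies $\binom{d+2}{2}\tilde\sigma_0(\lk_\Delta(v),\partial\lk_\Delta(v))=\tfrac{1}{2}f_0(\lk_\Delta(v),\partial\lk_\Delta(v))$.

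Next, since $\Delta$ is a $d$-dimensional $\FF$-homology manifold with boundary, the link of any boundary vertex $v$ is a $(d-1)$-dimensional $\FF$-homology ball. Thus Proposition~\ref{missing-faces} applies directly to $B=\lk_\Delta(v)$ with the same value of the parameter $d$, and after multiplying condition (b) by $2$ its coefficient $2\binom{d+2}{2}$ matches the one in Proposition~\ref{missing-faces} exactly. Consequently, (b) holds at $v$ if and only if $\lk_\Delta(v)$ can be written as a connected sum of an $\FF$-homology $(d-1)$-ball with no interior vertices and some number of boundary complexes of $d$-simplices. Combining this equivalence with (a) gives precisely property (L), which establishes the biconditional part of the corollary.

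For the moreover clause I would use a two-sided sandwich. Corollary~\ref{muinequality}(ii) with $i=1$ supplies $b_1(\Delta,\partial\Delta)-b_0(\Delta,\partial\Delta)\leq \mu_1(\Delta,\partial\Delta)-\mu_0(\Delta,\partial\Delta)$, and since $\partial\Delta\neq\emptyset$ the reduced and unreduced Betti numbers in degrees $0$ and $1$ agree. Combining this with the inequality furnished by Theorem~\ref{6.3} yields
\[
\binom{d+2}{2}\bigl(\tilde b_1(\Delta,\partial\Delta)-\tilde b_0(\Delta,\partial\Delta)\bigr)\ \leq\ \binom{d+2}{2}\bigl(\mu_1(\Delta,\partial\Delta)-\mu_0(\Delta,\partial\Delta)\bigr)\ \leq\ g_2(\Delta,\partial\Delta).
\]
If the outer terms coincide then equality propagates to the middle, and the first part of the corollary then delivers property (L). The only nontrivial bookkeeping is checking that the $d$ of Proposition~\ref{missing-faces} (defined by $\dim B=d-1$) coincides with the $d$ of our homology manifold $\Delta$, so that the binomial coefficients in (b) and in Proposition~\ref{missing-faces} genuinely line up; once that alignment is confirmed, the remainder of the argument is purely formal and presents no further obstacle.
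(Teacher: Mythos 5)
Your assembly is correct and is exactly the paper's (one-line) argument: the biconditional comes from the equality statement of Theorem~\ref{6.3} together with Proposition~\ref{missing-faces} applied to the boundary-vertex links (which are $(d-1)$-dimensional homology balls, so the binomial coefficients do line up), and the moreover clause follows by sandwiching $\binom{d+2}{2}\bigl(\tilde b_1-\tilde b_0\bigr)\le\binom{d+2}{2}\bigl(\mu_1-\mu_0\bigr)\le g_2(\Delta,\partial\Delta)$ via Corollary~\ref{muinequality}. No gaps; this matches the intended proof.
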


The proof of Proposition \ref{missing-faces} relies on the following three lemmas. Recall that for a simplicial complex $B$ and $F\subseteq V(B)$, $F$ is a \textbf{missing face of $B$} if every proper subset of $F$ is a face of $B$, but $F$ itself is \emph{not} a face of $B$. A missing face $F$ is a \textbf{missing $k$-face} if $|F|=k+1$. We denote by $m_k(B)$ the number of missing $k$-faces of $B$. Note that for a $(k+1)$-subset $F\subseteq V(B)$, $\tilde{b}_{k-1}(B_F)\in\{0,1\}$ and $F$ is a missing face of $B$ if and only if $\tilde{b}_{k-1}(B_F)=1$. In particular, 
\begin{equation} \label{m}
m_{d-1}(B)=\sum_{U\subseteq V(B), \ |U|=d} \tilde{b}_{d-2}\big(B_{U};\FF\big).
\end{equation} 

\begin{lemma} \label{balls-duality-cor}
Let $B$ be a $(d-1)$-dimensional $\FF$-homology ball, where $d\geq 3$. Then $B$ has at most $f_0(B,\partial B)$ missing $(d-1)$-faces. Furthermore, if $2{d+2 \choose 2} \tilde \sigma_0\big(B,\partial B\big) = f_0\big(B,\partial B\big)$, then $B$ has exactly $f_0(B,\partial B)$ missing $(d-1)$-faces.
\end{lemma}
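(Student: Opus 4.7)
My plan is to interpret $m_{d-1}(B)$ as a single graded Betti number of $\FF[B,\partial B]$ and then invoke the bounds from Sections~4 and~5. Using eq.~\eqref{m}, one can write $m_{d-1}(B) = \sum_{|U|=d} \tilde b_{d-2}(B_U;\FF)$. The Alexander-duality relation~\eqref{duality} established in the proof of Proposition~\ref{balls-duality} converts each term $\tilde b_{d-2}(B_U)$ into $\tilde b_0(B_{V\setminus U},(\partial B)_{V\setminus U})$. After reindexing $W = V \setminus U$ (so $|W| = n-d$), Hochster's formula (Theorem~\ref{5.1}) with $i = n-d-1$ and $j=1$ recognizes the resulting sum as the graded Betti number $\beta_{n-d-1,\,n-d}^S(\FF[B,\partial B])$. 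Thus
\[
m_{d-1}(B) = \beta_{n-d-1,\,n-d}^S\big(\FF[B,\partial B]\big).
\]
I expect this identification, which hinges on the ball hypothesis through the duality \eqref{duality}, to be the conceptual main step.

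With that identity in hand, both claims become immediate consequences of earlier results. Since $B$ is a homology manifold with boundary, it is a normal pseudomanifold with boundary, so Theorem~\ref{4.9} applies and, specialized to $i=n-d-1$, gives $\beta_{n-d-1,\,n-d}^S(\FF[B,\partial B]) \leq g_1(B,\partial B)\binom{n-d-1}{n-d-1} = g_1(B,\partial B)$. Because $f_{-1}(B,\partial B)=0$, one has $g_1(B,\partial B) = f_0(B,\partial B)$, yielding the first inequality. For the ``furthermore'' part, the hypothesis $2\binom{d+2}{2}\tilde\sigma_0(B,\partial B) = f_0(B,\partial B)$ is precisely the equality case of Proposition~\ref{6.1}, whose ``moreover'' clause forces $\beta_{n-d-1,n-d}^S(\FF[B,\partial B]) = f_0(B,\partial B)$; combining this with the identification above gives $m_{d-1}(B) = f_0(B,\partial B)$.
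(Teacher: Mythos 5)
Your proposal is correct and follows essentially the same route as the paper: the paper's proof is exactly the chain $f_0(B,\partial B)\ \geq\ \beta^S_{n-d-1,n-d}(\FF[B,\partial B])\ =\ \sum_{|W|=n-d}\tilde b_0(B_W,(\partial B)_W)\ =\ \sum_{|U|=d}\tilde b_{d-2}(B_U)\ =\ m_{d-1}(B)$, using Theorem \ref{4.9}, Hochster's formula, eq.~\eqref{duality}, and eq.~\eqref{m}, with the equality case handled by the ``moreover'' clause of Proposition \ref{6.1}, just as you do. You have merely written the same identifications in the reverse order, so nothing is missing.
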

\begin{proof} Assume that $V(B)=[n]$. According to Theorem \ref{4.9} and since $g_1(B,\partial B)=f_0(B,\partial B)$, 
\begin{align*}
f_0(B,\partial B)&\stackrel{(\dag)}{\geq} \beta^S_{n-d-1, n-d}\big(\FF[B,\partial B]\big) \qquad & \\
&= \sum_{W\subseteq [n], \ |W|=n-d} \tilde{b}_0\big(B_W,(\partial B)_W;\FF\big) \qquad &\mbox{(by Hochster's formula)}\\
&= \sum_{W\subseteq [n], \ |W|=n-d} \tilde{b}_{d-2}\big(B_{[n]\setminus W};\FF\big) \qquad &\mbox{(by eq.~\eqref{duality})}\\
&= m_{d-1}\big(B\big) \qquad &\mbox{(by eq.~\eqref{m})}.
\end{align*} 
Moreover, by Proposition \ref{6.1}, if $2{d+2 \choose 2} \tilde \sigma_0\big(B,\partial B\big) = f_0\big(B,\partial B\big)$, then $(\dag)$ holds as equality.
The assertion follows.
\end{proof}

For a finite set $A$, we denote by $\overline{A}:=\{C \, : \, C\subseteq A\}$ the simplex on $A$. We use the following well-known facts: (i) a homology sphere with a vertex removed is a homology ball, and (ii) if $M=K\#L$, then $M$ is a homology sphere if and only if $K$ and $L$ are homology spheres.

\begin{lemma} \label{boundary}
Let $T$ be a $(d-1)$-dimensional $\FF$-homology ball and let $\Sm$ be a $(d-1)$-dimensional $\FF$-homology sphere, where $d\geq 3$. If $\Delta=T\#_\phi\Sm$, then $\Delta$ is also an $\FF$-homology ball, $\partial\Delta=\partial T$, and $g_2(\Delta,\partial\Delta)=g_2(T,\partial T)+g_2(\Sm)$.
\end{lemma}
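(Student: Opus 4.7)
The plan is to verify the three claims in the lemma in order: (i) $\Delta$ is an $\FF$-homology ball, (ii) $\partial\Delta = \partial T$, and (iii) $g_2(\Delta,\partial\Delta)=g_2(T,\partial T)+g_2(\Sm)$. Set $\widehat\Delta := T \cup_\phi \Sm$ for the complex in which $F'$ and $F''$ are identified via $\phi$ but the common facet is \emph{not} deleted; then $\Delta = \widehat\Delta \setminus \{F'\}$ and $T \cap \Sm = \overline{F'}$ as subcomplexes of $\widehat\Delta$.

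For claims (i) and (ii) I would argue by strong induction on $d$, combining a global homology calculation with a case analysis on links. Mayer--Vietoris applied to $\widehat\Delta = T \cup \Sm$, using that the intersection $\overline{F'}$ is contractible and that $T$ is acyclic, gives $\tilde H_*(\widehat\Delta;\FF) \cong \tilde H_*(\Sm;\FF)$. The long exact sequence of the pair $(\widehat\Delta,\Delta)$, together with the observation that $C_*(\widehat\Delta,\Delta)$ is one-dimensional in degree $d-1$ (generated by $e_{F'}$) and zero elsewhere, then collapses to $0 \to \tilde H_{d-1}(\Delta) \to \FF \to \FF \to \tilde H_{d-2}(\Delta) \to 0$; the middle map is nonzero because $F''$ appears with nonzero coefficient in the fundamental cycle of $\Sm$, so $\tilde H_*(\Delta;\FF) = 0$. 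For the local structure, I would case-split on $G \in \Delta$: if $G$ has a vertex outside $F'$, then every coface of $G$ in $\Delta$ is confined to one side, so $\lk_\Delta(G)$ equals $\lk_T(G)$ or $\lk_\Sm(G)$ and the desired sphere-or-ball classification is immediate; if $\emptyset \ne G \subsetneq F'$, one checks directly that $\lk_\Delta(G) = \lk_T(G) \#_{F' \setminus G} \lk_\Sm(G)$, a connected sum of a $(d-|G|-2)$-dimensional homology ball or sphere with a homology sphere of the same dimension. By the inductive hypothesis (together with the classical fact that the connected sum of two homology spheres is a homology sphere), this link has the same ball-or-sphere type as $\lk_T(G)$, with boundary equal to $\partial\lk_T(G)$. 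Combining the two analyses yields both (i) and (ii), the latter because $\lk_\Sm(G)$ contributes no boundary.

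For claim (iii), a direct face-vector calculation suffices. Because subfaces of the common facet are double-counted, $f_i(\Delta) = f_i(T) + f_i(\Sm) - \binom{d}{i+1}$ for $0 \leq i \leq d-2$, while $f_i(\partial\Delta) = f_i(\partial T)$ by (ii), so $f_i(\Delta,\partial\Delta) = f_i(T,\partial T) + f_i(\Sm) - \binom{d}{i+1}$ for $i=0,1$. Unwinding the definition of $g_j$ yields the identity $g_2(\Psi) = \binom{d+1}{2} f_{-1}(\Psi) - d\,f_0(\Psi) + f_1(\Psi)$ for any $(d-1)$-dimensional relative complex $\Psi$. Substituting for $\Psi = (\Delta,\partial\Delta),\,(T,\partial T),\,\Sm$ and noting that $f_{-1}(\Delta,\partial\Delta) = f_{-1}(T,\partial T) = 0$ while $f_{-1}(\Sm) = 1$, the constant correction terms cancel exactly via the hockey-stick identity $\binom{d+1}{2} + \binom{d}{2} = d^2$, giving $g_2(\Delta,\partial\Delta) = g_2(T,\partial T) + g_2(\Sm)$. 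The main obstacle is the inductive link step for $\emptyset \ne G \subsetneq F'$: one must carefully verify that the combinatorial identification of $\lk_\Delta(G)$ as a connected sum really respects the removal of $F'$ from $\widehat\Delta$, and invoke the correct form of the inductive hypothesis (this very lemma in lower dimension when $\lk_T(G)$ is a ball, and the standard sphere-sphere connected-sum result when $\lk_T(G)$ is a sphere).
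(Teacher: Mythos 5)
Your argument reaches the paper's conclusions by a genuinely different and more self-contained route. The paper's proof is a short reduction to the two standard facts recorded just before the lemma: it cones off the boundary, setting $K:=T\cup(u\ast\partial T)$, which is an $\FF$-homology sphere; then $\Lambda:=K\#_{\phi}\Sm$ is a homology sphere (connected sum of spheres), and since $\Delta=T\#_\phi\Sm=\Lambda\setminus u$, the complex $\Delta$ is a homology ball with $\partial\Delta=\lk_\Lambda(u)=\partial T$. The $g_2$-identity is then the same face-count bookkeeping you do, phrased as $f_i(\Delta,\partial\Delta)=f_i(T,\partial T)+f_i(\Sm)-f_i(\overline{G})$ together with $g_2(\overline{G})=0$, rather than via your explicit formula $g_2=\binom{d+1}{2}f_{-1}-d\,f_0+f_1$ (which is correct, and your cancellation check is right). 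You instead prove the topological claims from scratch: Mayer--Vietoris plus the long exact sequence of the pair $(\widehat\Delta,\Delta)$ for acyclicity --- where your key point, that the fundamental cycle of $\Sm$ has nonzero coefficient on $F''$, is exactly what makes the connecting map an isomorphism (this uses the standard pseudomanifold argument that a top-dimensional cycle of a homology sphere is supported on all facets) --- and a link-by-link analysis, with $\lk_\Delta(G)=\lk_T(G)\#\lk_\Sm(G)$ for $\emptyset\neq G\subsetneq F'$ handled by induction. What the paper's route buys is brevity and no induction, at the price of leaning on the facts that a homology sphere minus a vertex is a homology ball and that a connected sum is a homology sphere iff both summands are; what your route buys is that you essentially reprove those facts in the ball--sphere case, so nothing is delegated.

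One point you must still nail down: your inductive step appeals to ``this very lemma in lower dimension,'' but the lemma is stated only for $d\geq 3$, whereas for $G\subsetneq F'$ with $|G|\in\{d-2,d-1\}$ the links $\lk_T(G)$ and $\lk_\Sm(G)$ have dimension $1$ or $0$. You therefore need either to formulate the induction statement for all dimensions and verify the low-dimensional cases directly (e.g.\ in dimension $0$ the connected sum of a single point with $\mathbb{S}^0$ is a single point, i.e.\ a ball with empty boundary; in dimension $1$, a path summed with a cycle along an endpoint edge is again a path with the same endpoints), or to treat the faces $G$ with $|G|\geq d-2$ by hand. This is a gap of bookkeeping rather than of substance; once it is filled, your argument is complete.
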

\begin{proof} 
Let $u\notin V(\Delta)$ be a new vertex. Define $K:=T\cup(u\ast\partial T)$ and $\Lambda:=K\#_{\phi}\Sm$. Since $T$ is an $\FF$-homology ball, $K$ is an $\FF$-homology sphere. Thus $\Lambda$ is the connected sum of homology spheres, and hence it is also a homology sphere. Finally, since $\Delta=T\#_\phi\Sm=\Lambda\setminus u$, it follows that $\Delta$ is a homology ball with $\partial\Delta=\lk_\Lambda(u)=\partial T$. As for the face numbers, assume $\Delta$ is formed by identifying facets $G\in T$ and $G'\in\Sm$. Then
$f_i(\Delta, \partial\Delta)=f_i(T,\partial T)+f_i(\Sm)-f_i(\overline{G})$ for all $-1\leq i\leq d-2$. Thus,
$g_2(\Delta,\partial\Delta)=g_2(T,\partial T)+g_2(\Sm)-g_2(\overline{G})=g_2(T,\partial T)+g_2(\Sm).$
\end{proof}

\begin{lemma} \label{connected_sum}
Let $d\geq 3$ and let $B$ be a $(d-1)$-dimensional $\FF$-homology ball with exactly $f_0(B,\partial B)$ missing $(d-1)$-faces.
Then $B=T\#\Sm_1\#\Sm_2\#\cdots\#\Sm_m$, where $T$ is a $(d-1)$-dimensional $\FF$-homology ball that has no interior vertices, $m=f_0(B,\partial B)$, and each $\Sm_i$ is the boundary complex of a $d$-dimensional simplex.
\end{lemma}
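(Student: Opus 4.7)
The plan is to induct on $m := f_0(B,\partial B)$, which by hypothesis equals $m_{d-1}(B)$. The base case $m=0$ takes $T=B$ with no summands. In the inductive step $m\geq 1$, the entire argument reduces to producing an interior vertex $v_0$ of $B$ whose link equals $\partial\overline{F}$ for some $d$-subset $F\subseteq V(B)$.

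Given such a pair $(v_0,F)$, first observe that $F$ must then be a missing $(d-1)$-face of $B$. For if $F$ were a face, it would be a facet (since $\dim B = d-1$ and $|F|=d$), and then $\Sigma:=\partial\overline{\{v_0\}\cup F}$ would embed in $B$ as a proper $(d-1)$-sphere subcomplex. But every ridge of $\Sigma$ is already saturated by two facets of $\Sigma$, so by the pseudomanifold condition no facet of $B\setminus\Sigma$ can meet these ridges, forcing the facet-adjacency graph of $B$ to disconnect and contradicting the connectedness of the ball $B$. With $F$ missing, define
\[
B' := (B\setminus v_0)\cup \overline{F},
\]
the complex obtained by deleting $v_0$ and adjoining the $(d-1)$-simplex $\overline{F}$ as a new facet. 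This bistellar-type replacement swaps $\st_B(v_0) = \{v_0\}\ast\partial\overline{F}$ (a $(d-1)$-ball with boundary $\partial\overline{F}$) for the $(d-1)$-ball $\overline{F}$ with the same boundary, and a direct check of vertex and face links shows that $B'$ is a $(d-1)$-dimensional $\FF$-homology ball with $\partial B' = \partial B$ and
\[
B = B' \# \partial\overline{\{v_0\}\cup F}.
\]
The operation strictly removes one interior vertex and exactly one missing $(d-1)$-face (namely $F$, which becomes a facet of $B'$) without introducing any new missing $(d-1)$-faces, so $B'$ satisfies the hypothesis with parameter $m-1$; the induction then yields the desired decomposition.

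The main obstacle is producing $v_0$. Following the proof of Lemma \ref{balls-duality-cor}, the duality of eq.~\eqref{duality} gives for every missing $(d-1)$-face $F'$ of $B$ that
\[
\tilde b_0\bigl(B_{V(B)\setminus F'},\,(\partial B)_{V(B)\setminus F'};\FF\bigr)=\tilde b_{d-2}(B_{F'};\FF)=1,
\]
so $B_{V(B)\setminus F'}$ contains a unique connected component $K_{F'}$ lying entirely on interior vertices of $B$. If some $K_{F'}$ is a singleton $\{v_0\}$, then every neighbor of $v_0$ in $B$ lies in $F'$; since $v_0$ is interior, $\lk_B(v_0)$ is a $(d-2)$-homology sphere on at most $d$ vertices, and the only such sphere on exactly $d$ vertices is $\partial\overline{F'}$. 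Hence $\lk_B(v_0) = \partial\overline{F'}$ and we may take $F=F'$. The crux is therefore to show that the equality $m_{d-1}(B)=f_0(B,\partial B)$ forces at least one $K_{F'}$ to be a singleton. I expect this to follow from a double-counting argument that compares $\sum_{F'}|V(K_{F'})|$ with the interior vertex count $m$, exploiting the degree bound $\deg_B(v)\geq d$ for every interior $v$ (equivalent to the minimum vertex count of a $(d-2)$-homology sphere) together with the tightness in the estimate $\beta^S_{n-d-1,n-d}(\FF[B,\partial B])\leq f_0(B,\partial B)$ from Theorem~\ref{4.9} that underlies the hypothesis of the lemma.
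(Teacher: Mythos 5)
Your reduction step (given an interior vertex $v_0$ with $\lk_B(v_0)=\partial\overline{F}$, swap $\st_B(v_0)$ for $\overline{F}$, check that exactly one missing $(d-1)$-face and one interior vertex disappear, and induct) is fine as far as it goes, but the proof has a genuine gap at exactly the point you flag as the crux: you never establish that the hypothesis $m_{d-1}(B)=f_0(B,\partial B)$ produces such a vertex $v_0$, i.e.\ that some $K_{F'}$ is a singleton. The double-counting you sketch cannot work in the form stated. If no $K_{F'}$ were a singleton you would get $\sum_{F'}|V(K_{F'})|\geq 2m$, so you would need an upper bound on this sum strictly below $2m$; but no such bound exists, because the sum is legitimately much larger than $2m$ in the very examples the lemma describes. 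Take $B=T\#\Sm_1\#\cdots\#\Sm_m$ glued in a chain, where each $\Sm_{i+1}$ is attached along a facet of $\Sm_i$ containing the new interior vertex $v_i$: then removing the $i$-th missing face $F_i$ isolates $\{v_i,v_{i+1},\dots,v_m\}$ from the boundary, so $|V(K_{F_i})|=m-i+1$ and $\sum_{F'}|V(K_{F'})|=\binom{m+1}{2}$. So a global count of $\sum_{F'}|V(K_{F'})|$ against $m$, even reinforced by the degree bound $\deg_B(v)\geq d$ or by tightness in Theorem \ref{4.9}, cannot by itself force a singleton; one would need at least an extremal refinement (e.g.\ analyzing an inclusion-minimal $K_{F'}$), and that argument is not supplied. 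There is also a secondary unaddressed case: when all boundary vertices lie in $F'$ (so $(\partial B)_{V\setminus F'}=\emptyset$), the identity $\tilde b_0(B_{V\setminus F'},(\partial B)_{V\setminus F'})=1$ no longer says there is a unique all-interior component, so your dichotomy needs a separate treatment there.

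For contrast, the paper's proof never needs a "free" interior vertex. It passes to the homology sphere $\Lambda=B\cup(u\ast\partial B)$, cuts along an arbitrary missing $(d-1)$-face $G$ with $\partial\overline{G}\neq\partial B$ in the sense of Walkup \cite{Walkup-70}, writing $\Lambda=\Lambda_1\#_\phi\Lambda_2$ and hence $B=(\Lambda_1\setminus u)\#_\phi\Lambda_2$, and then shows that both pieces $B_1=\Lambda_1\setminus u$ and $B_2=\Lambda_2\setminus G_2$ are homology balls for which interior-vertex counts and missing-face counts are additive; the inequality $m_{d-1}\leq f_0(\cdot,\partial\cdot)$ of Lemma \ref{balls-duality-cor} then forces the hypothesis to be inherited by each piece, and a double induction on $(m,f_0(B))$ finishes the argument. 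If you want to salvage your one-vertex-at-a-time strategy, the missing existence statement is precisely what you must prove, and it appears to require an argument of comparable depth to the splitting argument it is meant to replace.
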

\begin{proof} The proof is by induction on $m:=f_0(B,\partial B)$ and $n:=f_0(B)$. If $m=0$, then $B$ has no interior vertices, so taking $T=B$ establishes the result in this case (for any value of $n$). If $m=1$ and $n=d+1$, then $B=\Sm\setminus G$ where $\Sm$ is the boundary of a $d$-simplex and $G$ is a facet of $\Sm$. In this case, $B=\overline{G}\#\Sm$, and we are done again.

Thus assume that either $m=1$ and $n>d+1$ (equivalently, $f_0(\partial B)>d$) or $m\geq 2$. Then $B$ has a missing $(d-1)$-face $G$ such that $\partial \overline{G}\neq \partial B$. Consider a new vertex $u\notin V(B)$, and let $\Lambda:=B\cup(u\ast \partial B)$. Then $\Lambda$ is a $(d-1)$-dimensional $\FF$-homology sphere, $G$ is a missing $(d-1)$-face of $\Lambda$, and $u\notin G$. By cutting along the boundary of $\overline{G}$ and filling in the two missing facets, $G_1$ and $G_2$, that result from $G$ (see Walkup \cite{Walkup-70} for more details on this operation), we conclude that $\Lambda$ can be expressed as $\Lambda_1\#_\phi \Lambda_2$, where $\Lambda_1$ and $\Lambda_2$ are $(d-1)$-dimensional $\FF$-homology spheres, $\phi$ identifies the vertices of $G_1\in\Lambda_1$ with the corresponding vertices of $G_2\in\Lambda_2$, and the indexing is chosen so that $u$ is a vertex of $\Lambda_1$. It follows that 
\begin{equation} \label{B}
B=\Lambda\setminus u = (\Lambda_1 \setminus u) \ \#_\phi \ \Lambda_2.
\end{equation}

Define $B_1:= \Lambda_1 \setminus u$ and $B_2:=\Lambda_2\setminus G_2$. Then $B_1$ and $B_2$ are $\FF$-homology $(d-1)$-balls. Furthermore, from the definition of $B_1$ and $B_2$ along with \eqref{B} and Lemma \ref{boundary} we infer that
\begin{itemize}
\item[(i)] $\partial{B_1}=\partial{B}$ and $\partial{B_2}=\partial\overline{G_2}$;
\item[(ii)] $f_0(B,\partial B)=f_0(B_1,\partial B_1)+f_0(B_2,\partial B_2)$;
\item[(iii)] $m_{d-1}(B)=m_{d-1}(B_1)+m_{d-1}(B_2)$;
\item[(iv)] $f_0(B_2,\partial B_2)=f_0(\Lambda_2)-d\geq 1$;
\item[(v)] $f_0(B_2)=f_0(B)-f_0(B_1)+d<f_0(B)$.
\end{itemize}
Here (v) follows from the observation that $B_1$ contains all the vertices of $\partial B_1=\partial B$ as well as of $G_1$, and from our choice of $G$ to satisfy $\partial\overline{G}\neq\partial{B}$. Since $m=f_0(B,\partial B)=m_{d-1}(B)$, we obtain from (ii), (iii), and Lemma \ref{balls-duality-cor} that for $i=1,2$, $B_i$ has exactly $f_0(B_i,\partial B_i)$ missing $(d-1)$-faces: $f_0(B_i,\partial B_i)=m_{d-1}(B_i)$; furthermore, by (iii) and (iv), $m_{d-1}(B_1)<m$, and by (iii) and (v), $m_{d-1}(B_2)\leq m$ while $f_0(B_2)<n$. Hence by the inductive hypothesis, 
$$B_1=T\#\Sm_1\#\cdots\#\Sm_k \quad \mbox{and} \quad B_2=D\#\Sm_{k+1}\#\cdots\#\Sm_m,
$$ where $T$ and $D$ are $\FF$-homology balls that have no interior vertices, $k=m_{d-1}(B_1)$, and
$\Sm_1,\ldots,\Sm_m$ are the boundary complexes of $d$-simplices. Also, since $\partial D=\partial B_2=\partial \overline{G_2}$ and since $D$ has no interior vertices, we conclude that $D=\overline{G_2}$. Eq.~\eqref{B} then yields that $B=T\#\Sm_1\#\cdots\#\Sm_m$.
\end{proof}

We are now ready to finish the proof of Proposition \ref{missing-faces}, and hence also of Corollary \ref{mu-equal-charact} and Theorem \ref{main1}.

\smallskip\noindent {\it Proof of Proposition \ref{missing-faces}: \ }
One direction of the proposition is immediate from Lemmas \ref{balls-duality-cor} and \ref{connected_sum}. For the other direction, assume that $B=T\#\Sm_1\#\cdots\#\Sm_m$, where $T$ is an $\FF$-homology $(d-1)$-ball without interior vertices, and $\Sm_1,\ldots, \Sm_m$ are the boundary complexes of $d$-simplices. We must show that $$2{d+2 \choose 2} \tilde \sigma_0\big(B,\partial B\big) = f_0\big(B,\partial B\big)=m.$$ We prove this by induction on $m$. If $m=0$, then $B$ has no interior vertices. Hence for every $W\subseteq V(B)$, the relative complex $(B_W, (\partial B)_W)$ has no $0$-faces. Consequently, $\tilde{b}_0(B_W, (\partial B)_W)=0$ for all $W$, and $\tilde \sigma_0\big(B,\partial B\big)=0$ by Definition \ref{def:sigma-and-mu}.

For the inductive step, it suffices to show that if $B$ is obtained from $B'$ by replacing a facet $G$ with $x\ast\partial\overline{G}$, where $x$ is a new vertex, then $$2{d+2 \choose 2}\tilde \sigma_0\big(B,\partial B\big)=2{d+2 \choose 2}\tilde \sigma_0\big(B',\partial B'\big)+ 1.$$ This is an easy consequence of Lemma \ref{1.3} and Lemma \ref{5.3}. Indeed, $\partial B=\partial B'$ while the $1$-skeleton of $B$ is obtained from the $1$-skeleton of $B'$ by adding a new vertex $x$ and connecting it to $d$ vertices of $B'$ (the vertices of $G$) which by themselves form a clique. Hence for $W\subseteq V(B')$,
\begin{align*}
\tilde{b}_0\big(B_{W\cup\{x\}}, (\partial B)_{W\cup\{x\}}\big) &=
\tilde{b}_0\big(B'_{W}, (\partial B')_{W}\big)+\left\{\begin{array}{ll}
1, & \mbox{if $W\cap G=\emptyset$}\\
0, &\mbox{if $W\cap G\neq \emptyset$}\end{array}\right. \\
&=\tilde{b}_0\big(B'_{W\cup\{x\}}, (\partial B')_{W\cup\{x\}}\big)+\left\{\begin{array}{ll}
1, & \mbox{if $W\cap G=\emptyset$}\\
0, &\mbox{if $W\cap G\neq \emptyset$}\end{array}\right. ,
\end{align*}
and we infer from Lemma \ref{1.3} that 
$$\tilde \sigma_0\big(B,\partial B\big)=\tilde \sigma_0\big(B',\partial B'\big)+\frac{1}{|V(B)|+1}\sum_{W\subseteq V(B')\setminus G}\frac{1}{{|V(B)|\choose |W|+1}}.$$ The result follows since  Lemma \ref{5.3} (applied with $n=|V(B)|$ and $r=1$) implies that the second summand equals $\frac 1 {(d+2) {d+1 \choose 1}}=\frac{1}{2{d+2\choose 2}}$.
\endproof

\section{Homology balls satisfying Theorem \ref{main1} with equality}

Theorem \ref{main1} provides necessary conditions on homology $d$-manifolds with boundary that 
satisfy $g_2(\Delta,\partial\Delta)={d+2 \choose 2}\big(\tilde{b}_1(\Delta,\partial\Delta)-\tilde{b}_0(\Delta,\partial\Delta)\big)$. The goal of this section is to establish a \textit{complete characterization} of homology balls with $g_2(\Delta,\partial\Delta)=0$. Specifically we prove

\begin{theorem} \label{criterion-ball}
Let $\Delta$ be a $d$-dimensional $\FF$-homology ball, where $d\geq 3$. Then $g_2(\Delta,\partial\Delta)=0$ if and only if $\Delta=\Gamma\#\Sm_1\#\cdots\#\Sm_m$, where $\Gamma$ is a $d$-dimensional $\FF$-homology ball that has no interior vertices and no interior edges, and each $\Sm_i$ is the boundary complex of a $(d+1)$-simplex. 
\end{theorem}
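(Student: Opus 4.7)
For the \emph{``if''} direction, I would first check it directly for $\Gamma$: since $f_0(\Gamma,\partial\Gamma)=f_1(\Gamma,\partial\Gamma)=0$ and $f_{-1}(\Gamma,\partial\Gamma)=0$, one has $g_2(\Gamma,\partial\Gamma)=h_2-h_1=0$ straight from the definition. Next I would verify that connected-summing a homology $d$-ball with $\partial\sigma^{d+1}$ (the boundary of a $(d+1)$-simplex) along an interior facet preserves the quantity $g_2(\cdot,\partial\cdot)$. This uses the standard identities $f_i(\Delta_1\#\Delta_2)=f_i(\Delta_1)+f_i(\Delta_2)-f_i(\bar F)$ for $-1\le i<d$, $f_d(\Delta_1\#\Delta_2)=f_d(\Delta_1)+f_d(\Delta_2)-2$, and $\partial(\Delta_1\#\Delta_2)=\partial\Delta_1$ (when $\Delta_2$ is closed and $F$ is interior to $\Delta_1$), which combine into $g_2(\Delta_1\#\Delta_2,\partial(\Delta_1\#\Delta_2))=g_2(\Delta_1,\partial\Delta_1)+g_2(\Delta_2)$. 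Since $g_2(\partial\sigma^{d+1})=0$, iterating yields $g_2(\Delta,\partial\Delta)=g_2(\Gamma,\partial\Gamma)=0$.

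For the \emph{``only if''} direction, the hypothesis $g_2(\Delta,\partial\Delta)=0$ together with $\tilde b_i(\Delta,\partial\Delta;\FF)=0$ for all $i<d$ (since $\Delta$ is a homology ball) places us in the equality case of Theorem~\ref{main1}, so Corollary~\ref{mu-equal-charact} grants $\Delta$ property~(L): every interior vertex link is a stacked $(d-1)$-sphere, and every boundary vertex link is a $(d-1)$-ball without interior vertices connect-summed with some boundaries of $d$-simplices. I then induct on $m$, the number of interior vertices of $\Delta$. The base case $m=0$ is immediate: $g_2=0$ combined with $f_0(\Delta,\partial\Delta)=0$ forces $f_1(\Delta,\partial\Delta)=0$, so $\Delta$ has no interior edges and we take $\Gamma=\Delta$.

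For the inductive step with $m\ge 1$, the goal is to locate an interior vertex $v$ whose link is the minimal stacked sphere $\partial\sigma^d$ (on $d+1$ vertices). Given such $v$, set $G=V(\lk_\Delta(v))$; I would first show that $G$ is a missing $d$-face of $\Delta$. Otherwise $G\in\Delta$ is a facet, so the induced subcomplex on $\{v\}\cup G$ equals $\partial\sigma^{d+1}$, and by the manifold condition every $(d-1)$-face of this subcomplex already lies in two of its own facets, preventing any outside facet of $\Delta$ from sharing a ridge with $\partial\sigma^{d+1}$. Writing $\Delta=A\cup B$ with $A=\partial\sigma^{d+1}$ and $B$ generated by the remaining facets gives $\dim(A\cap B)\le d-2$, and Mayer--Vietoris then forces $\tilde H_d(\Delta;\FF)\supseteq\tilde H_d(A;\FF)\cong\FF$, contradicting that $\Delta$ is a homology $d$-ball. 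Having ruled this out, I would apply Walkup's cut to the closed sphere $\Lambda=\Delta\cup(u\ast\partial\Delta)$ along the missing facet $G$: one side of the cut is exactly $\partial\sigma^{d+1}$ on $\{v\}\cup G$, and removing the apex $u$ from the other side yields an $\FF$-homology $d$-ball $\Delta_1=(\Delta\setminus v)\cup\{G\}$ with $\partial\Delta_1=\partial\Delta$ and precisely one fewer interior vertex. The ``if''-direction computation, applied in reverse, yields $g_2(\Delta_1,\partial\Delta_1)=g_2(\Delta,\partial\Delta)-g_2(\partial\sigma^{d+1})=0$, so the inductive hypothesis produces $\Delta_1=\Gamma\#\Sm_1\#\cdots\#\Sm_{m-1}$, and appending $\partial\sigma^{d+1}$ gives $\Delta=\Gamma\#\Sm_1\#\cdots\#\Sm_m$.

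The principal obstacle I anticipate is producing an interior vertex $v$ with the minimal link $\lk_\Delta(v)=\partial\sigma^d$. Property~(L) guarantees only that each interior vertex link is a connected sum of $k_v\ge 1$ boundaries of $d$-simplices, and one must rule out that every $k_v\ge 2$. A natural route is edge-counting: $g_2=0$ is equivalent to $f_1(\Delta,\partial\Delta)=(d+1)m$, while the link structures force $\sum_{v\text{ interior}}\deg_\Delta(v)=dm+\sum_v k_v\ge(d+1)m$ with equality iff every $k_v=1$, and combining this with the rigid link-by-link structure imposed by property~(L) on boundary-vertex links (applying Proposition~\ref{missing-faces} to each) should force equality somewhere. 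Alternatively, beginning with any interior $v$ with $k_v\ge 2$ and an ear $u$ of $\lk_\Delta(v)$, a case analysis on whether $\{u\}\cup V(\lk_{\lk_\Delta(v)}(u))$ is a facet of $\Delta$ either lets us cut off a $\partial\sigma^{d+1}$ directly on $\{u,v\}\cup V(\lk_{\lk_\Delta(v)}(u))$, or exhibits an interior vertex whose link has strictly fewer summands, so that iterating descends to the minimal case.
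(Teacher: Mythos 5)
Your ``if'' direction and the overall induction-by-cutting scheme in the ``only if'' direction match the paper's argument, but the inductive step has a genuine gap exactly where you flag it: you need an interior vertex whose link is the \emph{minimal} stacked sphere $\partial\sigma^d$, and neither of your proposed routes produces one. The edge count does not close: writing $E_{II}$, $E_{IB}$, $E_{BB}$ for the numbers of interior edges with two, one, or zero interior endpoints, the identity $g_2(\Delta,\partial\Delta)=0$ gives $E_{II}+E_{IB}+E_{BB}=(d+1)m$, while your degree sum is $2E_{II}+E_{IB}=dm+\sum_v k_v$; subtracting yields only $\sum_v(k_v-1)=E_{II}-E_{BB}$, which is perfectly consistent with every $k_v\geq 2$ (e.g.\ $E_{BB}=0$ and $E_{II}=m$), so no vertex with $k_v=1$ is forced. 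The alternative ``ear'' descent is likewise unsubstantiated: in the case where $\{u\}\cup V(\lk_{\lk_\Delta(v)}(u))$ is not a face, you give no reason why some interior vertex should have strictly fewer summands in its link, and this is not obvious.

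The paper avoids the need for a minimal-link vertex altogether: for the induction it suffices to produce \emph{some} missing $d$-face of $\Delta$ (one then cuts along it as in Lemma \ref{connected_sum}, and the sphere side need only be stacked, which follows from the Lower Bound Theorem once $g_2$ additivity from Lemma \ref{boundary} forces both summands of $g_2$ to vanish). The missing $d$-face is supplied by Lemma \ref{int-vertex}: take any interior vertex $v_0$; by Theorem \ref{6.3} its link is a stacked sphere, so either the link is $\partial\sigma^d$ (and then its vertex set is a missing $d$-face, since a homology $d$-ball cannot contain a $d$-sphere), or the link has a missing $(d-1)$-face $H$. If $H\in\Delta$, then $\{v_0\}\cup H$ is the desired missing $d$-face; and the remaining case $H\notin\Delta$ is ruled out by a retriangulation trick borrowed from Swartz: splitting $\{H\}\cup\lk_\Delta(v_0)$ into two homology spheres meeting along $\overline H$, coning each with a new vertex, and replacing $\st_\Delta(v_0)$ by the resulting ball produces a homology $d$-ball $\Delta'$ with $g_2(\Delta',\partial\Delta')=g_2(\Delta,\partial\Delta)-1<0$, contradicting Theorem \ref{main1}. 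This last step --- using the lower bound $g_2(\cdot,\partial\cdot)\geq 0$ itself to exclude the bad case --- is the idea your proposal is missing; with it, your case analysis around a single interior vertex (which is otherwise sound, including the argument that the vertex set of a $\partial\sigma^d$-link is a missing face) goes through and the rest of your induction is essentially the paper's proof.
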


The proof is similar to the proof of Lemma \ref{connected_sum}, but requires one additional trick that we borrow from \cite{Swartz-08} and use in the following lemma.

\begin{lemma} \label{int-vertex}
Let $\Delta$ be a $d$-dimensional $\FF$-homology ball, where $d\geq 3$. If $g_2(\Delta,\partial\Delta)=0$ and if $\Delta$ has at least one interior vertex, then $\Delta$ has a missing $d$-face.
\end{lemma}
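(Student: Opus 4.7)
Since $\Delta$ is a connected $\field$-homology $d$-ball, all relative Betti numbers $\tilde b_i(\Delta,\partial\Delta;\field)$ with $i<d$ vanish, so the hypothesis $g_2(\Delta,\partial\Delta)=0$ falls in the equality case of Corollary~\ref{mu-equal-charact}. Thus $\Delta$ satisfies property~(L); in particular, every interior vertex of $\Delta$ has a stacked homology $(d-1)$-sphere as its link. I will argue by contradiction: fix an interior vertex $v$, write $L:=\lk_\Delta(v)$, and assume that $\Delta$ has no missing $d$-face.

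Suppose first that $L=\partial\overline W$ for some $(d+1)$-subset $W$ of $V(\Delta)$. Every proper subset of $W$ lies in $L\subseteq\Delta$, so the no-missing-$d$-face assumption forces $W\in\Delta$; then $\overline W\cup\st_\Delta(v)=\partial\overline{\{v\}\cup W}$ is a closed $d$-sphere $\Sigma$ sitting inside $\Delta$. Each ridge of $\Sigma$ already belongs to two facets of $\Sigma$, so the pseudomanifold axiom (at most two facets per ridge) and the strong connectivity of the dual graph of $\Delta$ force $\Delta=\Sigma$, contradicting $\partial\Delta\ne\emptyset$. Hence $L$ must be a \emph{nontrivial} stacked $(d-1)$-sphere, and its connected-sum decomposition produces a ``leaf'' vertex $w\in V(L)$ with $\lk_L(w)=\partial\overline X$ for some $d$-subset $X\subset V(L)\setminus\{w\}$, where $X$ is itself a missing facet of $L$. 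The $(d+1)$-subset $\{v\}\cup X$ is not in $\Delta$ (because $X\notin L$), while every size-$d$ proper subset of the form $\{v\}\cup Y$ with $Y\subsetneq X$ is in $\Delta$; consequently $\{v\}\cup X$ is a missing $d$-face as soon as $X\in\Delta$, and I may assume $X\notin\Delta$.

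To handle this remaining case I apply the Swartz-style trick of \cite{Swartz-08} and descend into $\lk_\Delta(w)$. The link-of-a-link identity gives $\lk_{\lk_\Delta(w)}(v)=\lk_L(w)=\partial\overline X$, and since $v\notin\partial\Delta$ the vertex $v$ is interior in $\lk_\Delta(w)$. By property~(L), $\lk_\Delta(w)$ is a stacked $(d-1)$-sphere when $w$ is interior in $\Delta$, and a connected sum of an $\field$-homology ball without interior vertices with several simplex boundaries when $w\in\partial\Delta$. Running the sphere-in-sphere or sphere-in-ball version of the opening paragraph's argument inside $\lk_\Delta(w)$, at the vertex $v$ with the $d$-subset $X$, either yields an immediate contradiction with $X\notin\Delta$ or exhibits a new interior vertex of $\Delta$ whose link is a $d$-simplex boundary (a Case-A vertex of $\Delta$); in the latter situation the opening Case-A argument applied at that new vertex produces the desired missing $d$-face of $\Delta$.

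\textbf{Main obstacle.} The heart of the proof is the descent into $\lk_\Delta(w)$ when $X\notin\Delta$: in that case $X$ is a missing facet of both $L$ and $\lk_\Delta(w)$, so one cannot simply invoke the Case-A sphere-in-sphere conclusion $\lk_\Delta(w)=\partial\overline{X\cup\{v\}}$ directly. Carrying out the sphere/ball dichotomy one level down while carefully tracking which subsets actually lie in $\Delta$ rather than only in intermediate links, and splitting into the sub-cases $w$ interior versus $w\in\partial\Delta$, is exactly where the extra trick from \cite{Swartz-08} enters, and is what distinguishes Lemma~\ref{int-vertex} from the simpler Lemma~\ref{connected_sum}.
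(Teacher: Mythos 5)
Your opening reductions are sound and essentially match the paper's: equality in the $\mu$-version of the lower bound (or Theorem \ref{6.3} directly) forces the link $L=\lk_\Delta(v)$ of an interior vertex to be a stacked $(d-1)$-sphere; the case $L=\partial\overline W$ is handled by the no-missing-$d$-face hypothesis plus the impossibility of a $d$-sphere sitting inside a homology $d$-ball (your dual-graph argument works, though the paper's homological one -- a $d$-cycle in an acyclic complex with no $(d+1)$-faces -- is shorter); and if a missing facet $H$ of $L$ happens to lie in $\Delta$, then $\{v\}\cup H$ is the desired missing $d$-face. The genuine gap is the remaining case $H=X\notin\Delta$, which you yourself flag as the ``main obstacle'' and never close. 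Your proposed descent into $\lk_\Delta(w)$ cannot deliver what you claim: if, say, $w$ is interior, all you learn is that $\lk_\Delta(w)$ is a stacked $(d-1)$-sphere in which $v$ has link $\partial\overline X$ with $X\notin\lk_\Delta(w)$ -- a perfectly consistent local configuration ($v$ is the apex of a leaf in the tree-of-simplices picture), so it yields neither ``an immediate contradiction with $X\notin\Delta$'' nor a new interior vertex of $\Delta$ whose link \emph{in $\Delta$} is the boundary of a $d$-simplex. The point is that the obstruction to $X\notin\Delta$ is not local, so no amount of link-chasing will expose it; this is also why your appeal to \cite{Swartz-08} is a mislabel -- that trick is not a descent into links.

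What the paper does at exactly this spot is a local \emph{retriangulation} combined with the global inequality of Theorem \ref{main1}. Since $L$ is a homology sphere and $H$ is a missing facet, $L\cup\{H\}$ is the union of two homology spheres $\Lambda_1,\Lambda_2$ meeting along $\overline H$; with two new vertices $w_1,w_2$ one forms the homology ball $B=(w_1\ast\Lambda_1)\cup(w_2\ast\Lambda_2)$, whose boundary is $L=\partial(\st_\Delta(v))$, and sets $\Delta':=(\Delta\setminus v)\cup B$. Then $\Delta'$ is again a homology $d$-ball, the swap adds one interior vertex and $d$ interior edges, and since $g_2(\cdot,\partial\cdot)=f_1(\cdot,\partial\cdot)-(d+1)f_0(\cdot,\partial\cdot)$ here, one gets $g_2(\Delta',\partial\Delta')=g_2(\Delta,\partial\Delta)-1=-1<0$, contradicting Theorem \ref{main1} applied to $\Delta'$. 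So the case $X\notin\Delta$ is ruled out only by this global $g_2$ count on a modified complex; your argument contains no step of this kind (nor any substitute for it), and as written the proof is incomplete. Your extra care in choosing $X$ to come from a ``leaf'' vertex $w$ is harmless but unnecessary: the paper's argument works for an arbitrary missing facet $H$ of $L$.
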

\begin{proof} Let $v_0$ be an interior vertex of $\Delta$. Since $g_2(\Delta,\partial\Delta)=0$, by Theorem \ref{6.3}, the link of $v_0$ is a stacked $(d-1)$-sphere. Hence either (i) $\lk_\Delta(v_0)$ is the boundary of a $d$-simplex $\overline{G}$, or (ii) $\lk_\Delta(v_0)$ has a missing $(d-1)$-face $H$. In case (i), $G$ is a required \emph{missing} $d$-face of $\Delta$: indeed, if $G$ were a face of $\Delta$, it would follow that $\{v_0\}\cup G$ is a missing $(d+1)$-face of $\Delta$, which is impossible since a homology $d$-ball cannot contain a $d$-sphere as a subcomplex. 

In case (ii), there are again two subcases to consider: either $H\in \Delta$ or $H\notin\Delta$. In the former case, $\{v_0\}\cup H$ is a missing $d$-face of $\Delta$, and we are done. 
To complete the proof, it suffices to show that the latter case is impossible. To achieve this, we follow the trick from \cite{Swartz-08}. Assume to the contrary that $H\notin\Delta$.
Since $\lk_\Delta(v_0)$ is a homology sphere, $\{H\}\cup \lk_\Delta(v_0)$ is the union of two homology spheres, $\Lambda_1$ and $\Lambda_2$, whose intersection is $\overline{H}$. Consider two new vertices $w_1,w_2\notin V(\Delta)$ and let $B:=(w_1\ast \Lambda_1)\cup (w_2\ast \Lambda_2)$. Then $B$ is a homology $d$-ball whose boundary coincides with $\lk_\Delta(v_0)=\partial(\st_\Delta(v_0))$. Define a new complex $\Delta'$ by deleting the star of $v_0$ from $\Delta$ and replacing it with $B$, that is, $\Delta':=(\Delta \setminus v_0)\cup B$. Then $\Delta'$ is also a homology $d$-ball; furthermore, $f_0(\Delta',\partial\Delta')=f_0(\Delta,\Delta')+1$, and $f_1(\Delta',\partial\Delta')=f_1(\Delta,\partial\Delta)+d$. Therefore, $g_2(\Delta',\partial\Delta')=g_2(\Delta,\partial\Delta)-1<0$, contradicting Theorem \ref{main1}. The result follows.
\end{proof}

\smallskip\noindent {\it Proof of Theorem \ref{criterion-ball}: \ } 
First, assume that $\Delta=\Gamma\#\Sm_1\#\cdots\#\Sm_m$ , where $\Gamma$ is a $d$-dimensional $\FF$-homology ball that has no interior vertices and no interior edges, and each $\Sm_i$ is the boundary complex of a $(d+1)$-simplex. Then $g_2(\Gamma,\partial\Gamma)=0$ (since $f_{i}(\Gamma,\partial \Gamma)=0$ for $i \leq 1$).
Also, $g_2(\Sm_i)=0$ for all $1\leq i\leq m$. Hence by Lemma \ref{boundary}, $g_2(\Delta,\partial\Delta)=0$, as required.

For the other direction, assume that $g_2(\Delta,\partial\Delta)=0$. We must show that $\Delta=\Gamma\#\Sm_1\#\cdots\#\Sm_m$ for $\Gamma$ and $\Sm_i$ as in the previous paragraph. We do this by induction on $f_0(\Delta,\partial\Delta)$. If $f_0(\Delta,\partial\Delta)=0$, then the condition $g_2(\Delta,\partial\Delta)=0$ implies that $f_1(\Delta,\partial\Delta)=0$, and so taking $\Gamma=\Delta$ yields the result.

Suppose $f_0(\Delta,\partial \Delta)>0$.
By Lemma \ref{int-vertex}, $\Delta$ has a missing $d$-face $G$.
Cutting and patching along this missing face $G$ as in the proof of Lemma \ref{connected_sum}, we find a homology ball $\Delta_1$ and a homology sphere $\Lambda_2$ such that $\Delta=\Delta_1\#_\phi \Lambda_2$, and $\phi$ identifies the facets $G_1\in \Delta_1$ and $G_2\in\Lambda_2$ resulting from $G$. Then by Lemma \ref{boundary},
\[
0=g_2(\Delta,\partial\Delta)=g_2(\Delta_1,\partial\Delta_1)+g_2(\Lambda_2).
\]
Hence, by the Lower Bound Theorem \cite{Kalai-87} and by Theorem \ref{main1}, $g_2(\Lambda_2)=g_2(\Delta_1,\partial\Delta_1)=0$ and $\Lambda_2$ is a stacked sphere. Furthermore, as in the proof of Lemma \ref{connected_sum}, $f_0(\Delta_1,\partial\Delta_1)<f_0(\Delta,\partial\Delta)$. Thus, by the inductive hypothesis, $\Delta_1$ is of the desired form $\Gamma\#\Sm_{1}\#\cdots\#\Sm_k$. In addition, since $\Lambda_2$ is a stacked sphere, it is also of the form $\Lambda_2=\Sm_{k+1}\#\cdots\#\Sm_m$, where each $\Sm_i$ is the boundary of a simplex and we can choose their ordering so that $G_2\in\Sm_{k+1}$. These decompositions of $\Delta_1$ and $\Lambda_2$ along with the fact that $\Delta=\Delta_1\#_\phi \Lambda_2$ imply the result.
\endproof

Let $\Delta$ be a $d$-dimensional homology ball with $d\geq 3$. Since by Proposition \ref{Dehn-Somm}, $h_i(\Delta, \partial \Delta)=h_{d+1-i}(\Delta)$, it follows that $g_2(\Delta,\partial\Delta)=0$ if and only if $h_{d-1}(\Delta)=h_d(\Delta)$. Consequently, Theorem \ref{criterion-ball} implies the following result on the $h$-vectors of balls:

\begin{proposition} The integer vector $h=(h_0, h_1, h_2, \ldots, h_{d-2}, m, m, 0)$ is the $h$-vector of a $d$-dimensional homology ball if and only if $h-(0,m,m,\ldots,m,0)$ is the $h$-vector of a $d$-dimensional homology ball. 
\end{proposition}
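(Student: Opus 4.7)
The plan is to reduce to Theorem~\ref{criterion-ball} after working out the precise effect of connected sum on $h$-vectors. The main combinatorial step I would carry out first is the following identity: for any two $d$-dimensional pure simplicial complexes $\Delta',\Delta''$ glued along a facet,
\[
h_0(\Delta'\#\Delta'') = h_0(\Delta')+h_0(\Delta'')-1,\qquad h_{d+1}(\Delta'\#\Delta'') = h_{d+1}(\Delta')+h_{d+1}(\Delta'')-1,
\]
and $h_j(\Delta'\#\Delta'') = h_j(\Delta')+h_j(\Delta'')$ for $1\le j\le d$. This is a direct computation from the $f$-vector identities $f_i(\Delta'\#\Delta'') = f_i(\Delta')+f_i(\Delta'')-\binom{d+1}{i+1}$ for $-1\le i\le d-1$ and $f_d(\Delta'\#\Delta'')=f_d(\Delta')+f_d(\Delta'')-2$, combined with eq.~\eqref{h-numbers} and the values $h(\overline{F})=(1,0,\dots,0)$ for a $d$-simplex $\overline{F}$. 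Since $h(\partial\overline{G})=(1,1,\dots,1)$ (length $d+2$) for the boundary of a $(d+1)$-simplex $\overline{G}$, iterating the identity gives
\[
h(\Gamma\#\Sm_1\#\cdots\#\Sm_m) \;=\; h(\Gamma)\;+\;(0,m,m,\dots,m,0)
\]
whenever each $\Sm_i$ is the boundary of a $(d+1)$-simplex.

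For the ``only if'' direction, suppose $\Delta$ is a $d$-dimensional $\FF$-homology ball with $h(\Delta)=(h_0,\dots,h_{d-2},m,m,0)$. Since $\Delta$ is a ball, $\tilde b_i(\Delta,\partial\Delta;\FF)=0$ for $i<d$, so Proposition~\ref{Dehn-Somm} reduces to $h_i(\Delta,\partial\Delta)=h_{d+1-i}(\Delta)$; in particular
\[
g_2(\Delta,\partial\Delta) \;=\; h_2(\Delta,\partial\Delta)-h_1(\Delta,\partial\Delta) \;=\; h_{d-1}(\Delta)-h_d(\Delta) \;=\; 0.
\]
Theorem~\ref{criterion-ball} then writes $\Delta=\Gamma\#\Sm_1\#\cdots\#\Sm_{m'}$, where $\Gamma$ is a $d$-dimensional $\FF$-homology ball with no interior vertices and no interior edges, and each $\Sm_i$ is the boundary of a $(d+1)$-simplex. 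Because $f_0(\Gamma,\partial\Gamma)=f_1(\Gamma,\partial\Gamma)=0$, a direct application of eq.~\eqref{h-numbers} gives $h_0(\Gamma,\partial\Gamma)=h_1(\Gamma,\partial\Gamma)=h_2(\Gamma,\partial\Gamma)=0$, and Proposition~\ref{Dehn-Somm} applied to $\Gamma$ then forces $h_{d-1}(\Gamma)=h_d(\Gamma)=h_{d+1}(\Gamma)=0$. Plugging into the connect-sum formula yields $h(\Delta)=h(\Gamma)+(0,m',\dots,m',0)$; comparing the entries in position $d$ gives $m'=m$, so $h-(0,m,\dots,m,0)=h(\Gamma)$ is indeed the $h$-vector of the $d$-ball $\Gamma$.

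For the converse direction, suppose $h-(0,m,\dots,m,0)=h(\Gamma)$ for some $d$-dimensional $\FF$-homology ball $\Gamma$. Set $\Delta:=\Gamma\#\Sm_1\#\cdots\#\Sm_m$, where each $\Sm_i$ is the boundary of a $(d+1)$-simplex glued along an arbitrary facet of the current complex at each stage; the same apex-cone argument used in the proof of Lemma~\ref{boundary} (now with $d$ in place of $d-1$) shows that $\Delta$ remains a homology $d$-ball with $\partial\Delta=\partial\Gamma$. The connect-sum formula immediately gives $h(\Delta)=h(\Gamma)+(0,m,\dots,m,0)=h$. No substantive obstacle is anticipated; the only delicate point is tracking the $-1$ corrections at the extremes $h_0$ and $h_{d+1}$ in the connect-sum identity.
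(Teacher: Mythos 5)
Your proposal is correct and follows essentially the same route as the paper: the Dehn--Sommerville relation $h_i(\Delta,\partial\Delta)=h_{d+1-i}(\Delta)$ for balls translates the shape of the $h$-vector into $g_2(\Delta,\partial\Delta)=0$, Theorem~\ref{criterion-ball} supplies the connected-sum decomposition, and the $h$-vector arithmetic of connected sums (which the paper leaves implicit, and which you verify correctly, including the $-1$ corrections at $h_0$ and $h_{d+1}$ and the identification $m'=m$ via $h_d$) completes both directions.
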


This result appears to be new. It is worth noting that if the Billera-Lee conjecture \cite{Billera-Lee-81} on the $h$-vectors of balls were true, it would imply the above statement. However, while Lee
and Schmidt \cite{Lee-Schmidt-12} confirmed this conjecture in dimensions three and four, Kolins
\cite{Kolins-11} disproved it in dimensions five and higher.


\section{Concluding remarks and open problems}
In this section we comment on some of our results as well as discuss several problems related to this paper that we left unsolved.

\subsection{The fundamental group and the $\mu$-numbers}
The lower bounds on the $g$-numbers in terms of the $\mu$-numbers established in Theorem \ref{5.5} are stronger than the corresponding lower bounds in terms of the Betti numbers given in Theorem~\ref{main2}. We now provide an application of  Theorem \ref{5.5} related to the fundamental group that does not follow from Theorem~\ref{main2}.

For a simplicial complex $\Delta$, we denote by $m(\Delta)$ the minimum number of generators of the fundamental group of $\Delta$. A long-standing conjecture of Kalai posits that an arbitrary connected $d$-dimensional homology manifold $\Delta$ without boundary (where $d\geq 3$) satisfies
$$g_2(\Delta) \geq  {d+2 \choose 2} m(\Delta).$$
It follows from the Hurewicz Theorem that $m(\Delta)$ is at least as large as the number of generators of $H_1(\Delta;\mathbb Z)$; consequently, $m(\Delta)\geq b_1(\Delta;\FF)$ for any field $\FF$. In fact, there exist acyclic complexes that are not simply-connected. Thus, the above conjecture of Kalai is stronger than the inequality of  \eqref{rig-ineq1}.
While some partial results on this conjecture were known, see \cite{Klee-09,Swartz-09}, even the question of whether $g_2(\Delta) \geq {d+2 \choose 2}$ holds for homology manifolds with a non-trivial fundamental group remained open. Here we use the $\mu$-numbers to settle this question in the affirmative.

Below we follow the same conventions as in Notation \ref{notation1}; all computations are done over a fixed field $\FF$.

\begin{lemma}
\label{9.1}
Let $\Delta$ be a simplicial complex with $V(\Delta)=V=\{v_1,\dots,v_n\}$.
\begin{enumerate}
\item[(i)] If $\Delta$ satisfies
\begin{align}
\label{simplyconnected}
\sum_{k=1}^n \big[\tilde b_0 \big(\lk_\Delta(v_k)_{V_{\leq k-1}}\big) -\tilde b_{-1}\big(\lk_\Delta(v_k)_{V_{\leq k-1}}\big)\big] = - b_0(\Delta),
\end{align}
then, for each $k=1,2,\dots,n$, all connected components of $\Delta_{V_{\leq k}}$ are simply-connected.
\item[(ii)] If $\Delta$ is an arbitrary connected but not simply-connected complex, then
$\mu_1(\Delta)-\mu_0(\Delta) \geq 0$.
\end{enumerate}
\end{lemma}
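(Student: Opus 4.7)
My plan is to fix an ordering $v_1,\dots,v_n$ of $V(\Delta)$ and analyze the local term
$\tau_k := \tilde b_0(L_k) - \tilde b_{-1}(L_k)$, where $L_k := \lk_\Delta(v_k)_{V_{\leq k-1}}$. Observe that $\tau_k = -1$ when $L_k = \{\emptyset\}$ (that is, $\{v_k\}\in\Delta$ but $v_k$ has no $\Delta$-neighbor in $V_{\leq k-1}$), $\tau_k = 0$ when $L_k = \emptyset$ (that is, $\{v_k\}\notin\Delta$), and $\tau_k = \tilde b_0(L_k)\geq 0$ otherwise. Tracking how $b_0(\Delta_{V_{\leq k}})$ evolves with each new vertex, the count $a := \#\{k : L_k = \{\emptyset\}\}$ satisfies $a = b_0(\Delta) + \sum_k(c_k-1)$, where $c_k$ is the number of connected components of $\Delta_{V_{\leq k-1}}$ meeting $L_k$ and the sum runs over proper nonempty $L_k$. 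Since $c_k - 1 \leq \tilde b_0(L_k)$, this yields the baseline identity $\sum_{k=1}^n \tau_k = -b_0(\Delta) + \sum_k\bigl[\tilde b_0(L_k) - (c_k-1)\bigr] \geq -b_0(\Delta)$, with equality if and only if, at every stage, each connected component of $L_k$ lies in a distinct connected component of $\Delta_{V_{\leq k-1}}$.

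For Part (i), I assume this equality condition and prove by induction on $k$ that each connected component of $\Delta_{V_{\leq k}}$ is simply-connected. The base case and the cases $L_k\in\{\emptyset,\{\emptyset\}\}$ are immediate (the latter just attaches a new isolated vertex). When $L_k$ is nonempty proper, decompose $L_k = L_k^{(1)}\sqcup\cdots\sqcup L_k^{(s)}$ into connected components, so that by the equality condition each $L_k^{(j)}$ sits inside a different component $C_j$ of $\Delta_{V_{\leq k-1}}$. Because $v_k \ast L_k$ is contractible and attached along $L_k$, the resulting new component of $\Delta_{V_{\leq k}}$ is homotopy equivalent to the wedge at (the image of) $v_k$ of the cofibers $C_j \cup_{L_k^{(j)}} (v_k \ast L_k^{(j)})$. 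Since each $L_k^{(j)}$ is connected, van Kampen's theorem shows that each such cofiber has $\pi_1$ a quotient of $\pi_1(C_j)$, hence trivial by the inductive hypothesis; a second application of van Kampen to the one-point wedge yields that the new component is simply-connected.

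For Part (ii), I apply the contrapositive of Part (i). If the connected $\Delta$ is not simply-connected, then $b_0(\Delta)=1$ and $\Delta = \Delta_{V_{\leq n}}$ is itself a non-simply-connected component, so the equality $\sum_k \tau_k = -1$ cannot hold for any ordering; as $\sum_k \tau_k$ is an integer, this forces $\sum_k \tau_k \geq 0$ for every ordering of $V(\Delta)$. Averaging this inequality over all $n!$ orderings, exactly as in the proof of Proposition \ref{1.2}, the left-hand side transforms through Definition \ref{def:sigma-and-mu} (together with the ground-set invariance of Lemma \ref{1.3}) into $\mu_1(\Delta) - \mu_0(\Delta)$, yielding the desired inequality.

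The most delicate step will be pinning down the equivalence between the combinatorial equality condition and the simply-connectedness conclusion: the van Kampen argument for a bouquet of cones sharing an apex is standard, but the bookkeeping needed to verify that the equality condition precisely excludes both new $\pi_1$-loops through $v_k$ and double-merging of $L_k$-components into the same $C_j$ is the heart of the matter.
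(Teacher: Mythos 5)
Your proof is correct, and its overall architecture matches the paper's: extract from the hypothesis \eqref{simplyconnected} the stage-by-stage property that distinct connected components of $\lk_\Delta(v_k)_{V_{\leq k-1}}$ lie in distinct components of $\Delta_{V_{\leq k-1}}$, then run the same induction (Seifert--van Kampen on each piece $C_j \cup \big(v_k\ast L_k^{(j)}\big)$ glued along the connected $L_k^{(j)}$, followed by the wedge argument at $v_k$), and deduce (ii) by contrapositive, integrality, and averaging over all orderings as in the proof of Proposition \ref{1.2} together with Lemma \ref{1.3}. Where you genuinely diverge is in how the key merging property and the baseline bound $\sum_k \tau_k \geq -b_0(\Delta)$ are obtained: the paper gets them homologically, by noting that \eqref{simplyconnected} forces equality in each Morse-type inequality of Lemma \ref{1.1} (so the associated long exact sequences are exact, giving property $(\#)$ and, incidentally, $b_1(\Delta)=0$), whereas you bypass Lemma \ref{1.1} entirely with an elementary bookkeeping identity for $b_0(\Delta_{V_{\leq k}})$, namely $\sum_k \tau_k = -b_0(\Delta) + \sum_k \big[\tilde b_0(L_k)-(c_k-1)\big]$ with each bracketed term nonnegative; equality then visibly pins down the distinctness condition. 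Your route is more self-contained and makes the equality analysis transparent, at the cost of reproving by hand the $i\in\{0,1\}$ case of the Morse inequality that the paper simply quotes; both arguments are equally valid, and your averaging step in (ii) correctly reproduces $\mu_1(\Delta)-\mu_0(\Delta)$ as the mean of the left-hand sides of \eqref{simplyconnected}.
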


\begin{proof} We first note that part (ii) follows from part (i). Indeed, let $\Delta$ be a connected but not simply-connected complex.
By Lemma \ref{1.1}, the left-hand side of \eqref{simplyconnected} is at least as large  as $b_1(\Delta) -b_0(\Delta)\geq -b_0(\Delta)=-1$. Moreover, since $\Delta$ is not simply-connected, by part (i) of the statement, this inequality is strict, that is, the left-hand side of \eqref{simplyconnected} is $\geq 0$.  The result follows since $\mu_1(\Delta)-\mu_0(\Delta)$ is the average of the left-hand sides of \eqref{simplyconnected} over all possible orderings of $V$ (see the proof of Proposition \ref{1.2}).

We verify part (i). We may assume that each $v_j$ is indeed a vertex of $\Delta$.
Lemma \ref{1.1}(ii) and \eqref{simplyconnected} imply that $b_1(\Delta)=0$ and that the left-hand side of \eqref{simplyconnected} is equal to $b_1(\Delta)-b_0(\Delta)$. Thus, for $i=1$ (and all $k$), the inequality of \eqref{**3} must hold as equality. Hence we obtain from
the proof of Lemma \ref{1.1} that the following sequences are all exact:
\begin{align*}
0 &\longrightarrow H_1(\Delta_{V_{\leq k-1}}) \longrightarrow H_1(\Delta_{V_{\leq k}}) \longrightarrow \tilde H_0(\lk_\Delta(v_k)_{V_{\leq k-1}}) \\
&\longrightarrow H_0(\Delta_{V_{\leq k-1}}) \longrightarrow H_0(\Delta_{V_{\leq k}}) \longrightarrow \tilde H_{-1}(\lk_\Delta(v_k)_{V_{\leq k-1}})  \longrightarrow 0 \qquad \mbox{for all } k\leq n.
\end{align*}
Since $H_1(\Delta_{V_{\leq n}})=H_1(\Delta)$ vanishes, the exactness of these sequences  yields that $H_1(\Delta_{V_{\leq k}})=0$ for all $k$ and that the following property holds:
\begin{enumerate}
\item[$(\#)$] 
If $\lk_\Delta(v_k)_{V_{\leq k-1}} \ne \{\emptyset\}$, then $b_0(\Delta_{V_{\leq k-1}})-b_0(\Delta_{V_{\leq k}})=\tilde b_0(\lk_{\Delta}(v_k)_{V_{\leq k-1}})$.
\end{enumerate}

We now prove the statement by induction on $k$.
The complex $\Delta_{V_{\leq 1}}$ consists of a single vertex, and hence it is simply-connected.
Let $k \geq 2$ and suppose that all connected components of $\Delta_{V_{\leq k-1}}$ are simply-connected.
If $\lk_\Delta(v_k)_{V_{\leq k-1}} = \{ \emptyset\}$, then $\Delta_{V_{\leq k}}=\Delta_{V_{\leq k-1}} \uplus \overline{\{ v_k\}}$ and all connected components of $\Delta_{V_{\leq k}}$ are simply-connected. (Here $X \uplus Y$  denotes the disjoint union of $X$ and $Y$.)
Thus we may assume that $\lk_\Delta(v_k)_{V_{\leq k-1}} \ne \{ \emptyset\}$,
that $\Delta_{V_{\leq k-1}}$ has $s$ connected components which we denote by $\Gamma_1=\Delta_{W_1},\dots,\Gamma_s=\Delta_{W_s}$, and that $\lk_\Delta(v_k)_{V_{\leq k-1}}$ has $t$ connected components.
Property $(\#)$ implies that each $\Gamma_i$ contains at most one connected component of $\lk_\Delta(v_k)_{V_{\leq k-1}}$.  
Assume that $\Gamma_1,\dots,\Gamma_t$ contain connected components of $\lk_\Delta(v_k)_{V_{\leq k-1}}$ while $\Gamma_{t+1}\dots, \Gamma_s$ do not. 
Then
$$\Delta_{V_{\leq k}} = \big(\Delta_{W_1 \cup\{v_k\}} \cup \Delta_{W_2\cup\{v_k\}} \cup \cdots \cup \Delta_{W_t\cup\{v_k\}}\big)\uplus \Delta_{W_{t+1}} \uplus \cdots \uplus \Delta_{W_s}.$$
Note that the union $\Delta_{W_1 \cup\{v_k\}} \cup \Delta_{W_2\cup\{v_k\}} \cup \cdots \cup \Delta_{W_t\cup\{v_k\}}$ is the wedge sum since every two complexes $\Delta_{W_i\cup\{v_k\}}$ and $\Delta_{W_j\cup\{v_k\}}$ intersect exactly at $v_k$.
Since for $1\leq j\leq t$, the complexes in question satisfy (a) $\Gamma_j=\Delta_{W_j}$ is simply-connected, (b) $\lk_\Delta(v_k)_{W_j}$ is connected, and (c) $\st_{\Delta _{W_j \cup \{v_k\}}}(v_k)$ is contractable,
we obtain from the Seifert--van Kampen theorem that $\Delta_{W_j\cup\{v_k\}}$ is simply-connected (for all $j=1,2,\dots,t$).
Finally, since the wedge sum of simply-connected spaces is simply-connected, it follows that all connected components of $\Delta_{V_{\leq k}}$ are simply-connected.
\end{proof}

It was proved in \cite{Murai-15} (see the proof of Theorem 5.3 there) that if $\Delta$ is a $d$-dimensional normal pseudomanifold without boundary and $d \geq 3$, then
$$g_2(\Delta) \geq {d+2 \choose 2} (\mu_1(\Delta)-\mu_0(\Delta)+1).$$
This inequality along with Lemma \ref{9.1}(ii) yields the following strengthening of Swartz's result \cite[Theorem 4.3]{Swartz-09}.

\begin{theorem}
Let $\Delta$ be a connected $d$-dimensional normal pseudomanifold without boundary, where
$d \geq 3$. If $\Delta$ is not simply-connected, then
$g_2(\Delta) \geq {d+2 \choose 2}.$
\end{theorem}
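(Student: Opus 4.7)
The plan is to combine two ingredients that are already in place. First, the statement from \cite{Murai-15} quoted just before the theorem asserts that for any $d$-dimensional normal pseudomanifold without boundary with $d \geq 3$, one has
\[
g_2(\Delta) \;\geq\; \binom{d+2}{2}\bigl(\mu_1(\Delta)-\mu_0(\Delta)+1\bigr).
\]
Second, Lemma \ref{9.1}(ii) guarantees that $\mu_1(\Delta)-\mu_0(\Delta) \geq 0$ whenever $\Delta$ is connected but not simply-connected. Plugging the second bound into the first immediately yields $g_2(\Delta) \geq \binom{d+2}{2}$.

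So the proof is essentially a one-line citation of the two results. The only thing I would make explicit is that the hypotheses align: $\Delta$ is a connected normal pseudomanifold without boundary with $d \geq 3$ (needed for the $\mu$-number lower bound on $g_2$) and it is connected and not simply-connected (needed for Lemma \ref{9.1}(ii)).

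The genuinely novel content lives in Lemma \ref{9.1}: that is where the topological assumption (failure of simple connectivity, rather than just nonvanishing of $H_1$) gets converted into the combinatorial inequality $\mu_1-\mu_0 \geq 0$, via the Seifert--van Kampen argument in part (i) that strengthens the Morse-type inequality of Lemma \ref{1.1}. Once Lemma \ref{9.1} is in hand, the final theorem requires no further work beyond the substitution described above, so there is no real obstacle to carrying it out.
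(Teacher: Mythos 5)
Your proposal is correct and is exactly the paper's argument: the theorem is stated immediately after the inequality $g_2(\Delta) \geq \binom{d+2}{2}(\mu_1(\Delta)-\mu_0(\Delta)+1)$ from \cite{Murai-15} and is derived from it together with Lemma \ref{9.1}(ii), precisely as you describe. You have also correctly identified that all the substantive work lives in Lemma \ref{9.1}.
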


\subsection{Characterizing the cases of equality}
One problem we have not been able to solve so far is that of characterizing homology manifolds (or even normal pseudomanifolds) that satisfy the inequality of Theorem \ref{main1} as equality. Theorem~\ref{main1} provides necessary conditions for the equality to hold. However, it is not hard to see that in dimension three these conditions are not sufficient. Indeed, if $\Delta$ is the boundary complex of a cyclic $4$-polytope with $n \geq 6$ vertices, then every vertex link of $\Delta$ is a stacked sphere, yet  $g_2(\Delta)>0$
(see \cite[Remark 8.5]{Kalai-87}). Therefore, if $\Gamma$ is defined as $\Delta$ with one facet removed, then (i) $\Gamma$ is a triangulated ball, (ii) each vertex link of $\Gamma$ is either a stacked sphere or a ball obtained from a simplex by forming connected sums with the boundary complexes of simplices, yet (iii) $g_2(\Gamma,\partial \Gamma)=g_2(\Delta)>0$. On the other hand, when $d \geq 4$, any $d$-dimensional homology manifold all of whose vertex links are stacked spheres satisfies the inequality of Theorem \ref{LBT-closed} as equality (see \cite[\S 9]{Kalai-87}). Thus we conjecture that for $d\geq 4$, the conditions on vertex links given in Theorem \ref{main1} do characterize homology manifolds $\Delta$ with $g_2(\Delta,\partial \Delta)={d +2 \choose 2} (\tilde b_1(\Delta,\partial \Delta)-\tilde b_0(\Delta,\partial \Delta))$. 
However, the downside of this conjectural criterion is that it is \textit{local} (i.e., stated in terms of the links). It would be more desirable to establish a \textit{global} criterion (say, in the spirit of Theorem \ref{criterion-ball}) characterizing homology manifolds that satisfy the inequality of Theorem \ref{main1} as equality.

It would also be very interesting to characterize the cases of equality in Theorems \ref{main2} and \ref{5.5}. For Theorem \ref{5.5}, it would be enough to characterize homology balls that satisfy the inequality of Proposition~\ref{5.2} as equality. Unfortunately, at the moment we do not even have a reasonable guess at such a characterization.
Also, in view of \cite[Conjecture 1]{Bagchi:mu-vector}, it seems plausible that, for a homology $(d-1)$-sphere $\Delta$, the equality holds in Proposition \ref{5.2} for some $j\leq\frac {d-1} 2$ if and only if $\Delta$ is $j$-stacked. The `if'-part of this conjecture was proved in \cite[Theorem 1.4]{Bagchi:mu-vector} while Proposition~\ref{5.2} and Theorem~\ref{GLBTsphere} verified the `only if'-part for all $j \leq \frac d 2 -1$ under an additional assumption that $\Delta$ has the WLP. However, a new approach seems to be required for the case of $j=\frac {d-1} 2$.
Of course, it would also be nice to remove the WLP assumption from the statement of Theorem~\ref{main2}.

\subsection{Additional remarks on $h_2$ and $g_2$}
It is also worth mentioning that our work on this paper was motivated by our desire to resolve Conjecture 5.7 of \cite{Novik-Swartz-09:DS}. Specifically, Theorem 5.1 of \cite{Novik-Swartz-09:DS} asserts that if $\Delta$ is a \emph{connected} $d$-dimensional homology manifold with nonempty orientable boundary and $d\geq 4$, then 
\begin{equation} \label{h_2-old}
h_2(\Delta)\geq f_0(\Delta,\partial\Delta)+\binom{d+1}{2}\tilde{b}_1(\partial\Delta)+(d+1)\tilde{b}_0(\partial\Delta),
\end{equation}
and Conjecture 5.7 of \cite{Novik-Swartz-09:DS} posits that equality occurs in eq.~\eqref{h_2-old} if and only if all vertex links of $\Delta$ are stacked spheres or stacked spheres with a vertex removed. Now, if $\Delta$ is connected, then $\tilde{b}_0(\Delta,\partial\Delta)=0$ while $\tilde{b}_1(\Delta,\partial\Delta)-\tilde{b}_0(\partial\Delta)\geq 0$ (as the connected homomorphism $\tilde{H}_1(\Delta,\partial\Delta) \to \tilde{H}_0(\partial\Delta)$ is onto in this case). Hence the inequality of Corollary \ref{h_2-vs-int-vert} is stronger than that of \eqref{h_2-old}. As a result, Corollary \ref{h_2-vs-int-vert} naturally leads to counterexamples to \cite[Conjecture 5.7]{Novik-Swartz-09:DS}. For instance, let $\Lambda$ be a $d$-dimensional stacked manifold with a non-vanishing $\tilde{b}_1(\Lambda)$, let $v$ be a vertex of $\Lambda$, and let $\Delta:=\Lambda\setminus v$. Then $\Delta$ is a $d$-dimensional manifold with boundary, and each vertex link of $\Delta$ is a stacked sphere or a stacked sphere with a vertex removed; also $\tilde{b}_1(\partial\Delta)= \tilde{b}_0(\partial\Delta)=0$. Therefore, if \cite[Conjecture 5.7]{Novik-Swartz-09:DS} were true, we would have $h_2(\Delta)=f_0(\Delta,\partial\Delta)$. However, according to Corollary~\ref{h_2-vs-int-vert} 
$$h_2(\Delta)\geq f_0(\Delta,\partial\Delta) + \binom{d+2}{2}\tilde{b}_1(\Delta,\partial\Delta)
= f_0(\Delta,\partial\Delta) + \binom{d+2}{2}\tilde{b}_1(\Lambda) > f_0(\Delta,\partial\Delta).
$$

In fact, the idea of deleting a vertex from a complex, along with Theorem \ref{main1}, leads to the following strengthening of Theorem~\ref{LBT-closed} with which we close this paper.

\begin{proposition} \label{LBT-closed-v2}
Let $\Gamma$ be a (not necessarily connected) $d$-dimensional normal pseudomanifold without boundary. If $d\geq 4$ and $v$ is a vertex of $\Gamma$, then 
$$g_2(\Gamma)\geq \binom{d+2}{2} \left(\tilde{b}_1(\Gamma; \field)-\tilde{b}_0(\Gamma; \field)\right) +
\binom{d+1}{2}\tilde{b}_1\big(\lk_\Gamma(v); \field\big).
$$
\end{proposition}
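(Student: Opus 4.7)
The plan is to apply Theorem \ref{main1} to the antistar $\Delta:=\Gamma\setminus v$ and combine the resulting bound with the one supplied by Theorem \ref{LBT-closed} applied to $\lk_\Gamma(v)$. First I verify that $\Delta$ is a $d$-dimensional normal pseudomanifold whose boundary equals $\lk_\Gamma(v)$. Purity amounts to the claim that every $F\in\Delta$ lies in a facet of $\Gamma$ avoiding $v$: otherwise $\lk_\Gamma(F)$ would equal the cone $\{v\}\ast\lk_\Gamma(F\cup\{v\})$, contradicting the fact that $\lk_\Gamma(F)$ is a closed normal pseudomanifold of positive dimension (no such complex is a cone). The ridge condition is inherited from $\Gamma$, and a ridge of $\Delta$ lies on $\partial\Delta$ precisely when exactly one of its two parent facets in $\Gamma$ contains $v$, which identifies $\partial\Delta$ with $\lk_\Gamma(v)$. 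For the connectedness of $\lk_\Delta(F)=\lk_\Gamma(F)\setminus v$, where $F\in\Delta$ is nonempty with $\dim F\leq d-2$: either $\lk_\Gamma(F)$ is a cycle (when $\dim F=d-2$), in which case deleting $v$ produces a path or leaves the cycle intact, or $\lk_\Gamma(F)$ has dimension $\geq 2$, in which case any edge-path through $v$ of the form $\cdots\to a\to v\to b\to\cdots$ can be diverted through the connected normal pseudomanifold $\lk_\Gamma(F\cup\{v\})$ of positive dimension (using $\dim F\leq d-3$ and that $\Gamma$ has no boundary).

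Next I establish the identity
\[
g_2(\Gamma)=g_2(\Delta,\partial\Delta)+g_2(\lk_\Gamma(v)).
\]
This follows by direct $f$-vector manipulation from
\[
f_i(\Delta,\partial\Delta)=f_i(\Gamma)-f_i(\lk_\Gamma(v))-f_{i-1}(\lk_\Gamma(v)),
\]
itself a consequence of the decomposition $\Gamma=\Delta\cup\st_\Gamma(v)$ with $\Delta\cap\st_\Gamma(v)=\lk_\Gamma(v)$ and $\st_\Gamma(v)=\{v\}\ast\lk_\Gamma(v)$; applying Pascal's rule to $\binom{d+1-i}{d+1-j}$ in fact yields the stronger identity $h_j(\Gamma)=h_j(\Delta,\partial\Delta)+h_j(\lk_\Gamma(v))$ for every $j$, and the $g_2$-identity is its difference form. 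Simultaneously, excision applied to the same decomposition together with the contractibility of $\st_\Gamma(v)$ gives
\[
\tilde H_i(\Delta,\partial\Delta;\field)\cong \tilde H_i(\Delta,\lk_\Gamma(v);\field)\cong \tilde H_i(\Gamma,\st_\Gamma(v);\field)\cong \tilde H_i(\Gamma;\field)\quad\text{for every }i.
\]

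With these ingredients, Theorem \ref{main1} applied to $\Delta$ gives $g_2(\Delta,\partial\Delta)\geq\binom{d+2}{2}\bigl(\tilde b_1(\Gamma;\field)-\tilde b_0(\Gamma;\field)\bigr)$, while Theorem \ref{LBT-closed} applied to the closed normal pseudomanifold $\lk_\Gamma(v)$ of dimension $d-1\geq 3$ gives $g_2(\lk_\Gamma(v))\geq\binom{d+1}{2}\tilde b_1(\lk_\Gamma(v);\field)$; here I use that $\tilde b_0(\lk_\Gamma(v);\field)=0$, since the link of a vertex in a normal pseudomanifold of dimension $\geq 2$ is connected. Adding these two bounds yields the proposition.

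The main obstacle is the first step: the combinatorial verification that $\Delta=\Gamma\setminus v$ is a normal pseudomanifold with boundary $\lk_\Gamma(v)$. The ``no cone'' argument for purity and the path-rerouting argument for link connectedness both depend on $\Gamma$ having no boundary and on the dimensional hypothesis, and the assumption $d\geq 4$ is what ultimately enables the application of Theorem \ref{LBT-closed} to $\lk_\Gamma(v)$.
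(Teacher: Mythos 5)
Your proposal is correct and follows essentially the same route as the paper's proof: delete the vertex, identify $\partial(\Gamma\setminus v)$ with $\lk_\Gamma(v)$, use excision plus contractibility of the star to rewrite the relative Betti numbers, split $g_2(\Gamma)=g_2(\Delta,\partial\Delta)+g_2(\lk_\Gamma(v))$, and invoke Theorems \ref{main1} and \ref{LBT-closed}. The only difference is that you spell out the verifications (that $\Gamma\setminus v$ is a normal pseudomanifold with boundary $\lk_\Gamma(v)$, and the $h$-vector identity) which the paper states without proof or calls a straightforward computation.
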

\begin{proof} Let $\Delta:=\Gamma\setminus v$. Then (i) $\Delta$ is a normal pseudomanifold with connected boundary: $\partial\Delta=\lk_\Gamma(v)$, and (ii) $\tilde{b}_i(\Gamma; \FF)=\tilde{b}_i(\Gamma, \st_\Gamma(v);\FF)=\tilde{b}_i(\Delta,\partial\Delta;\FF)$ (by contractibility of the star and by the excision theorem). Furthermore, a straightforward computation shows that
$$
g_2(\Gamma)=g_2(\Delta,\partial\Delta)+g_2(\partial\Delta). 
$$
The result follows by replacing $g_2(\partial\Delta)$ and $g_2(\Delta,\partial\Delta)$ with their lower bounds provided by Theorems \ref{LBT-closed} and \ref{main1}, respectively.
\end{proof}

We believe that Proposition \ref{LBT-closed-v2} can be further strengthened and look forward to seeing these results.

\bigskip

\noindent
\textbf{Acknowledgements.}
We are grateful to one of the referees for pointing to us the connection between the fundamental group and the $\mu$-numbers.

{\small
\bibliography{manifolds-biblio}

\begin{thebibliography}{10}

\bibitem{Adiprasito-Sanyal}
Karim~A. Adiprasito and Raman Sanyal.
\newblock {Relative Stanley-Rreisner theory and upper bound theorems for
  Minkowski sums}.
\newblock \textit{Publ. Math. Inst. Hautes \'Etudes Sci.}, to appear;\
  \texttt{http://link.springer.com/article/10.1007/s10240-016-0083-7}, 2016.

\bibitem{Bagchi:mu-vector}
Bhaskar Bagchi.
\newblock The mu vector, {M}orse inequalities and a generalized lower bound
  theorem for locally tame combinatorial manifolds.
\newblock {\em European J. Combin.}, 51:69--83, 2016.

\bibitem{Bagchi-Datta-14}
Bhaskar Bagchi and Basudeb Datta.
\newblock On stellated spheres and a tightness criterion for combinatorial
  manifolds.
\newblock {\em European J. Combin.}, 36:294--313, 2014.

\bibitem{BDS}
Bhaskar Bagchi, Basudeb Datta, and Jonathan Spreer.
\newblock Tight triangulations of closed 3-manifolds.
\newblock {\em European J. Combin.}, 54:103--120, 2016.

\bibitem{Barnette-73}
David Barnette.
\newblock A proof of the lower bound conjecture for convex polytopes.
\newblock {\em Pacific J. Math.}, 46:349--354, 1973.

\bibitem{Billera-Lee-81}
Louis~J. Billera and Carl~W. Lee.
\newblock A proof of the sufficiency of {M}c{M}ullen's conditions for
  {$f$}-vectors of simplicial convex polytopes.
\newblock {\em J. Combin. Theory Ser. A}, 31(3):237--255, 1981.

\bibitem{BH}
Winfried Bruns and J{\"u}rgen Herzog.
\newblock {\em Cohen-{M}acaulay rings}, volume~39 of {\em Cambridge Studies in
  Advanced Mathematics}.
\newblock Cambridge University Press, Cambridge, 1996.
\newblock Revised edition.

\bibitem{Fogelsanger-88}
Allen~L. Fogelsanger.
\newblock {\em The generic rigidity of minimal cycles}.
\newblock ProQuest LLC, Ann Arbor, MI, 1988.
\newblock Thesis (Ph.D.)--Cornell University.

\bibitem{Grabe-87}
Hans-Gert Gr{\"a}be.
\newblock Generalized {D}ehn-{S}ommerville equations and an upper bound
  theorem.
\newblock {\em Beitr\"age Algebra Geom.}, 25:47--60, 1987.

\bibitem{Kalai-87}
Gil Kalai.
\newblock Rigidity and the lower bound theorem. {I}.
\newblock {\em Invent. Math.}, 88(1):125--151, 1987.

\bibitem{Klee-09}
Steven Klee.
\newblock The fundamental group of balanced simplicial complexes and posets.
\newblock {\em Electron. J. Combin.}, 16(2, Special volume in honor of Anders
  Bj\"orner):Research Paper 7, 12, 2009.

\bibitem{Klee-Novik-12}
Steven Klee and Isabella Novik.
\newblock Centrally symmetric manifolds with few vertices.
\newblock {\em Adv. Math.}, 229(1):487--500, 2012.

\bibitem{Klee-64}
Victor Klee.
\newblock A combinatorial analogue of {P}oincar\'e's duality theorem.
\newblock {\em Canad. J. Math.}, 16:517--531, 1964.

\bibitem{Kolins-11}
Samuel Kolins.
\newblock {$f$}-vectors of triangulated balls.
\newblock {\em Discrete Comput. Geom.}, 46(3):427--446, 2011.

\bibitem{Kuhnel-90}
Wolfgang K{\"u}hnel.
\newblock Triangulations of manifolds with few vertices.
\newblock In {\em Advances in differential geometry and topology}, pages
  59--114. World Sci. Publ., Teaneck, NJ, 1990.

\bibitem{Kuhnel-95}
Wolfgang K{\"u}hnel.
\newblock {\em Tight polyhedral submanifolds and tight triangulations}, volume
  1612 of {\em Lecture Notes in Mathematics}.
\newblock Springer-Verlag, Berlin, 1995.

\bibitem{Lee-94}
Carl~W. Lee.
\newblock Generalized stress and motions.
\newblock In {\em Polytopes: abstract, convex and computational ({S}carborough,
  {ON}, 1993)}, volume 440 of {\em NATO Adv. Sci. Inst. Ser. C Math. Phys.
  Sci.}, pages 249--271. Kluwer Acad. Publ., Dordrecht, 1994.

\bibitem{Lee-Schmidt-12}
Carl~W. Lee and Laura Schmidt.
\newblock On the numbers of faces of low-dimensional regular triangulations and
  shellable balls.
\newblock {\em Rocky Mountain J. Math.}, 41(6):1939--1961, 2011.

\bibitem{Lutz-Sulanke-Swartz-09}
Frank~H. Lutz, Thom Sulanke, and Ed~Swartz.
\newblock {$f$}-vectors of 3-manifolds.
\newblock {\em Electron. J. Combin.}, 16(2, Special volume in honor of Anders
  Bj\"orner):Research Paper 13, 33, 2009.

\bibitem{McMullen-70}
P.~McMullen.
\newblock The maximum numbers of faces of a convex polytope.
\newblock {\em Mathematika}, 17:179--184, 1970.

\bibitem{Migliore-Nagel-03}
Juan Migliore and Uwe Nagel.
\newblock Reduced arithmetically {G}orenstein schemes and simplicial polytopes
  with maximal {B}etti numbers.
\newblock {\em Adv. Math.}, 180(1):1--63, 2003.

\bibitem{Murai-10:Barycentric}
Satoshi Murai.
\newblock On face vectors of barycentric subdivisions of manifolds.
\newblock {\em SIAM J. Discrete Math.}, 24(3):1019--1037, 2010.

\bibitem{Murai-15}
Satoshi Murai.
\newblock Tight combinatorial manifolds and graded {B}etti numbers.
\newblock {\em Collect. Math.}, 66(3):367--386, 2015.

\bibitem{Murai-Nevo-13}
Satoshi Murai and Eran Nevo.
\newblock On the generalized lower bound conjecture for polytopes and spheres.
\newblock {\em Acta Math.}, 210(1):185--202, 2013.

\bibitem{Murai-Nevo-14}
Satoshi Murai and Eran Nevo.
\newblock On {$r$}-stacked triangulated manifolds.
\newblock {\em J. Algebraic Combin.}, 39(2):373--388, 2014.

\bibitem{MY}
Satoshi Murai and Kohji Yanagawa.
\newblock Squarefree {$P$}-modules and the {$\mathbf{cd}$}-index.
\newblock {\em Adv. Math.}, 265:241--279, 2014.

\bibitem{Novik-05}
Isabella Novik.
\newblock On face numbers of manifolds with symmetry.
\newblock {\em Adv. Math.}, 192(1):183--208, 2005.

\bibitem{Novik-Swartz-09:DS}
Isabella Novik and Ed~Swartz.
\newblock Applications of {K}lee's {D}ehn-{S}ommerville relations.
\newblock {\em Discrete Comput. Geom.}, 42(2):261--276, 2009.

\bibitem{Novik-Swartz-09:Gorenstein}
Isabella Novik and Ed~Swartz.
\newblock Gorenstein rings through face rings of manifolds.
\newblock {\em Compos. Math.}, 145(4):993--1000, 2009.

\bibitem{Novik-Swartz-09:Socles}
Isabella Novik and Ed~Swartz.
\newblock Socles of {B}uchsbaum modules, complexes and posets.
\newblock {\em Adv. Math.}, 222(6):2059--2084, 2009.

\bibitem{Schenzel-81}
Peter Schenzel.
\newblock On the number of faces of simplicial complexes and the purity of
  {F}robenius.
\newblock {\em Math. Z.}, 178(1):125--142, 1981.

\bibitem{Stanley-80}
Richard~P. Stanley.
\newblock The number of faces of a simplicial convex polytope.
\newblock {\em Adv. in Math.}, 35(3):236--238, 1980.

\bibitem{Stanley-93}
Richard~P. Stanley.
\newblock A monotonicity property of {$h$}-vectors and {$h^*$}-vectors.
\newblock {\em European J. Combin.}, 14(3):251--258, 1993.

\bibitem{Stanley-96}
Richard~P. Stanley.
\newblock {\em Combinatorics and commutative algebra}, volume~41 of {\em
  Progress in Mathematics}.
\newblock Birkh\"auser Boston, Inc., Boston, MA, second edition, 1996.

\bibitem{Swartz-04/05}
Ed~Swartz.
\newblock {Lower bounds for $h$-vectors of $k$-CM, independence and broken
  circuit complexes}.
\newblock {\em SIAM J. Disc. Math.}, 18(3):647--661, 2004/05.

\bibitem{Swartz-08}
Ed~Swartz.
\newblock Topological finiteness for edge-vertex enumeration.
\newblock {\em Adv. Math.}, 219(5):1722--1728, 2008.

\bibitem{Swartz-09}
Ed~Swartz.
\newblock Face enumeration---from spheres to manifolds.
\newblock {\em J. Eur. Math. Soc. (JEMS)}, 11(3):449--485, 2009.

\bibitem{Swartz-14}
Ed~Swartz.
\newblock Thirty-five years and counting.
\newblock \texttt{http://arxiv.org/pdf/1411.0987.pdf}, 2014.

\bibitem{Tay-95}
Tiong-Seng Tay.
\newblock Lower-bound theorems for pseudomanifolds.
\newblock {\em Discrete Comput. Geom.}, 13(2):203--216, 1995.

\bibitem{Walkup-70}
David~W. Walkup.
\newblock The lower bound conjecture for {$3$}- and {$4$}-manifolds.
\newblock {\em Acta Math.}, 125:75--107, 1970.

\bibitem{Ya}
Kohji Yanagawa.
\newblock Alexander duality for {S}tanley-{R}eisner rings and squarefree
  {$\mathbf N^n$}-graded modules.
\newblock {\em J. Algebra}, 225(2):630--645, 2000.

\end{thebibliography}
\bibliographystyle{plain}
}
\end{document}